\newtheorem{remark}{Remark}[section]
\newtheorem{proposition}[remark]{Proposition}
\newtheorem{theorem}[remark]{Theorem}
\newtheorem{definition}[remark]{Definition}
\newtheorem{corollary}[remark]{Corollary}
\newtheorem{lemma}[remark]{Lemma}
\newtheorem{hypo}[remark]{Hypothesis}
\def\R{\mathbb R}
\def\N{\mathbb N}
\def\E{\mathbb E}
\def\P{\mathbb P}
\def\shb{{\mathcal B}}
\def\shd{{\mathcal D}}
\def\shf{{\mathcal F}}
\def\shh{{\mathcal H}}
\def\partialt{{\partial_t}}
\newenvironment{prooff}{{\bf \textit{Proof}}}{\hfill $\Box$ \\}
\numberwithin{equation}{section}
\title{About semilinear low dimension Bessel PDE\MakeLowercase{s}}
\author{Alberto OHASHI$^1$}
\author{Francesco RUSSO$^2$}
\author{Alan TEIXEIRA\textsuperscript{3}}
\address{$1$ Departamento de Matem\'atica, Universidade de Bras\'ilia, 70910-900, Bras\'ilia, Brazil.} \email{amfohashi@gmail.com}
\address{$2$ ENSTA Paris, Institut Polytechnique de Paris,
	Unit\'e de Math\'ematiques appliqu\'ees, 828, boulevard des Mar\'echaux, F-91120 Palaiseau, France}
\email{francesco.russo@ensta-paris.fr}
\email{\textsuperscript{3}alanteixeiranicacio@yahoo.com.br}
\date{30th March 2024}
\begin{document}
\begin{abstract}
  We prove existence and uniqueness of 
  solutions of a semilinear PDE driven by a Bessel type generator
$L^\delta$  with low dimension $0 < \delta < 1$.
 $L^\delta$ is a local operator, whose drift is the
derivative of $x \mapsto \log (\vert x\vert)$:
in particular it is a Schwartz distribution, which
is not the derivative of a continuous function.
The solutions 
are intended in a duality {\it (weak)} sense
with respect to state space
$L^2(\R_+, d\mu),$ $\mu$ being 
an invariant measure for the Bessel semigroup.

\end{abstract}

\maketitle

\maketitle

{\bf Key words and phrases.} SDEs with distributional drift;
Bessel processes; Kolmogorov equation; mild and weak solutions;
self-adjoint operators; Friedrichs extension.

{\bf 2020 MSC}. 60H30; 35K10; 35K58; 35K67; 47B25.

\section{Introduction}

In this paper, we investigate the existence and uniqueness of weak solutions of semilinear parabolic
partial differential equations (PDEs) driven by the generator $L^\delta$ of the Markov semigroup associated with the Bessel process with dimension  $0 < \delta <1$. 

Recall that the class of Bessel processes is a family of  Markov processes with values in $\R_+$ and parameterized by $\delta \in \mathbb{R}_+$, called the {\it dimension}. Throughout this paper, we refer the regime $\delta \in (0,1)$ as the \textit{low dimension} case. Bessel processes have been largely investigated in the literature. We refer the reader to e.g \cite{mansuy,zamb,Yor} for an overview on the basic theory. Typical examples of low dimension Bessel processes appear in queueing theory (see e.g. \cite{Coffman}) and the theory of Schramm-Loewner evolution curves, see e.g. \cite{lawler}. Two-parameter family of Schramm-Loewner evolution $\textit{SLE}(\kappa,\kappa-4)$ defined in \cite{werner} provides a source of examples of Bessel flows with very singular behavior when $\delta = 1-\frac{4}{\kappa}, \kappa> 4$. We refer the reader to \cite{dubedat, beliaev2020new} for more details. Bessel processes with dimension $\delta>1$ also naturally appears in interacting particle systems related to random matrix theory (see e.g. \cite{katori}) and in stochastic volatility or interest rates models in mathematical finance, see e.g. \cite{Timer}.

The dimension $\delta\in \mathbb{R}_+$ dictates the regularity of the Bessel process. Indeed, 
in the regular case $\delta > 1$, it is a pathwise non-negative
solution of
\begin{equation}\label{NMBES}
dX_t = \frac{\delta-1}{2} X^{-1}_tdt + dW_t,\quad X_0=x_0,
\end{equation}
where $W$ is a standard Brownian motion,
see for instance Exercise (1.26) of Chapter IX in \cite{Yor}. In particular, $X$ is an It\^o process.
In the low dimension case $\delta \in(0, 1)$, the integral $\int_0^t X^{-1}_sds$ does not converge and the correspondent Bessel process is not a semimartingale but a Dirichlet process, see e.g. \cite{FolDir} or Chapter 14 of
\cite{Russo_Vallois_Book}. In fact, in that case, the Bessel process is highly recurrent and hits
zero very often, so that $x_0 = 0$ is a true singularity
for the Bessel process. Consequently, in the low dimension case $\delta \in(0, 1)$, we cannot expect
it to be a solution of a classical stochastic differential equation (SDE).

It is a well-established fact that any low-dimension Bessel process can be decomposed into the sum of a standard Brownian motion and a null quadratic variation process, expressed in terms of a density occupation measure via local times. For further insights, one may refer to works such as \cite{BertoinBessel} and \cite{mansuy}. Recently, in the study conducted by \cite{ORT1_Bessel}, the authors characterize any low dimension Bessel process as the unique solution of \eqref{NMBES} interpreted as a Stochastic Differential Equation (SDE) with a distributional drift. In this novel perspective, the singular drift, represented by $x \mapsto \frac{\delta-1}{2}\frac{1}{x}$, is elucidated as the derivative, in the sense of Schwartz distributions, of the function $x \mapsto \frac{\delta-1}{2}{\rm log} \vert x \vert$, which complements earlier representations based on principal values through local times. The authors prove that the Bessel process with dimension $\delta \in (0,1)$ is the unique non-negative solution of a suitable strong-martingale problem driven by its generator $L^\delta$. We refer the reader to \cite{ORT1_Bessel} and Definition \ref{smp} for further details.

In this article, we devote our attention to the semilinear backward Kolmogorov PDE associated with the generator $L^\delta$ of the Bessel process in the singular regime $\delta \in (0,1)$ 
\begin{equation}\label{pde01}
	\left\{
	\begin{array}{l}
		(\partialt + L^\delta) u + f(\cdot, \cdot, u, \partial_x u)=0\\
		u(T,x) = g(x).
	\end{array}
	\right.
\end{equation}
Formally speaking 
\begin{equation}\label{Ldeltades} 
L^\delta \phi = \frac{\phi''}{2} + \frac{\delta - 1}{2} p.v. \frac{1}{x}
\phi',
\end{equation}
where $p.v.$ is the Cauchy principal value.
In \eqref{pde01}, the coefficient 
$f:[0,T]\times\R_+\times \R\times \R \rightarrow \R$ is
a Borel Lipschitz function in the two last variables
and
$g$ is a Borel function
belonging to $L^2(d\mu),$
where $\mu$ is the  Borel measure
described in Definition \ref{Defmu}. 
If $f = 0$, then the candidate solution is
given by
$u(s,x_0) = \E(g(X^{x_0}_{T-s})), s \in [0,T]$,
where $(X^{x_0}_t)$ is the standard Bessel process starting at $x_0$
with dimension $\delta \in (0,1)$.

As described in \cite{ORT1_Bessel}, $L^\delta$ is a particular example of generator
of an SDE with distributional drift,
\begin{equation}\label{SDE}
dX_t = dW_t + b(X_t) dt,
\end{equation}
where the drift $b$ is a distribution (not a measure). Recently, several authors have been investigated generators associated with SDEs of the type \eqref{SDE}. In this direction, we refer the reader to e.g. \cite{issoglio_russoMPb} and other references therein.
In that  literature, in general, $b$ is the derivative (gradient)
of a continuous function.
In the present work, the drift in the description of the action $L^\delta \phi$ in (\ref{Ldeltades}) is the derivative of a discontinuous function
unbounded at zero. As in typical examples of SDEs with distributional
drift, the domain of the generator $\shd_{L^\delta}(\R)$ 
does not contain all the smooth functions (even with compact support).
In particular, the function $\phi(x) = x$ even cut outside
a compact interval does not belong to  $\shd_{L^\delta}(\R)$.

The main result of the present article is Theorem \ref{MainT}, which
states the existence and uniqueness of a {\it weak solution} of  the PDE \eqref{pde01},
where the duality is described by
a space of test functions $D$
suitably specified in \eqref{SetD} and the pivot space is $L^2(d\mu),$ see Definition
\ref{D42}.
Proposition \ref{WeakMild}
 reduces the problem to the existence and uniqueness
of mild solutions (see Definition \ref{D54}), where the corresponding semigroup
$(P_t^\delta)$ is the one associated with the generator
$L^\delta.$
By Proposition \ref{c1Bis}, the Borel measure $\mu$ is an invariant measure,
analogously to the case of heat semigroup (associated with Brownian motion),
where this role is played by the Lebesgue measure.

The proof of Theorem \ref{MainT} relies on a suitable fixed point
theorem based on the crucial inequality
stated in Proposition \ref{uti}, which plays the role of
some basic Schauder estimate.
The natural evolution space for \eqref{pde01}
is the space $\shh$ of elements in $L^2(d\mu)$ whose
derivative in the sense of distributions belongs again
to $L^2(d\mu)$.
Lemma \ref{l1} shows that the space $\shh$ fits with the domain
of a symmetric closed form, defined
on the Hilbert space $H = L^2(d\mu),$
which is equal to the domain of the square root of the map
$-L^\delta_F,$  the so called
 (self-adjoint) {\it Friederichs extension}
of the operator $-L^\delta$, see Proposition \ref{timecont}.

This article is organized as follows. Section \ref{preres} presents some preliminary results concerning the Bessel process and its associated semigroup. Section \ref{Fext} presents Friedrichs self-adjoint extension of the symmetric positive linear operator $-L^\delta$. Sections \ref{linearSEC} and \ref{NLPDE} discuss the main results of the paper. In particular, Theorem \ref{MainT} given in Section \ref{NLPDE} describes the existence of a unique weak solution of the PDE (\ref{pde01}) under mild regularity conditions (Hypotheses \ref{Hypf} and \ref{Hypg}) on the coefficient $f$ and the terminal data $g$. The appendix \ref{apA} briefly recalls some basic results about the Friedrichs self-adjoint extension. Appendix \ref{apB} presents the proofs of Proposition \ref{eqH}, Lemma \ref{l1} and other technical auxiliary results.  

\section{Recalls and preliminary results}

\label{preres}

\subsection{Bessel process
  and strong martingale problem}

$ $

Given $0< \delta < 1$, we introduce our basic Borel
$\sigma-$finite positive measure on $\R_+$.
\begin{definition} \label{Defmu}
  $$\mu(dx) := x^{\delta - 1} \mathds{1}_{\{x > 0\}} dx.$$
   \end{definition}
  We call $L^2(d\mu)$ as the space of functions
$f:\R_+\rightarrow \R$ such that $\int_{\R_+}f^2(x)\mu(dx) < \infty$. The inner product and norm of the Hilbert space $L^2(d\mu)$ will be denoted, respectively, by $\langle\cdot,\cdot\rangle_{L^2(d\mu)}$ and $||\cdot||_{L^2(d\mu)}$.
Throughout this article, we denote by $L^{1,2}(dt d\mu)$ as the Hilbert space
$L^1([0,T];L^2(d\mu))$.
The usual inner product on $L^2(\R_+,dx)$ and related norm
are denoted by $\langle\cdot,\cdot\rangle$ and $||\cdot||$ respectively.
In the paper, for a given function $v:[0,T] \times \R_+ \rightarrow \R$, we will often denote $v(t):=v(t,\cdot), t \in [0,T]$.

Similarly as in Definition 3.2 and considerations before
Remark 3.5 in \cite{ORT1_Bessel}, we set
\begin{equation} \label{DL2R+}
  \shd_{L^\delta}(\R_+):= \{f \in C^2(\R_+) \vert f'(0) = 0\},
  \end{equation}
\begin{equation} \label{SetD}
 D :=\{\varphi \in C^2_0(\R_+);  \varphi'(0) = 0\},
\end{equation}
where $C^2_0(\R_+)$ is the space of $C^2$ functions on $\R_+$
with compact support. Notice that
$D$ is a subset of the set $\shd_{L^{\delta}}(\R_+)$
and $D\subset L^2(d\mu)$.
\begin{remark} \label{RSetD}
$D$ is a closed subspace of the Banach
space $C_b^2(\R_+)$ of bounded functions $f: \R_+ \rightarrow \R$
of class $C^2$ with first and second order bounded derivatives.
\end{remark}

We define
${L}^{\delta}: \shd_{L^{\delta}}(\R_+)  \rightarrow C(\R_+)$ as 
\begin{equation}\label{op}
	L^{\delta}f(x) :=\left\{
	\begin{array}{ll}
		\dfrac{f''(x)}{2} + \dfrac{(\delta - 1)f'(x)}{2x},& x > 0\\
		\delta f''(0),& x = 0.
	\end{array}
	\right.
\end{equation}

  \begin{remark}\label{r1}
	\
  \begin{enumerate}
  \item
    Except when explicitly stated otherwise,  $L^{\delta}$ with stand for the restriction to $D$ of \eqref{op}.
\item  If $f \in D$ and $x > 0$, we can write
  $L^{\delta}f(x)=\dfrac{id^{1-\delta}}{2}(id^{\delta-1}f')'(x)$, where $id$ stands for the identity function
  $x \mapsto x$.
  \end{enumerate}
\end{remark}

 Given a fixed Brownian motion $W$ on some fixed probability
 space $(\Omega, \shf, \P)$ and $x \geq 0$, the $\delta$-squared Bessel process $S$ is the unique positive strong solution of
\begin{equation}\label{sqbs}
	dS_t = \sqrt{S_t}dW_t + \delta dt, S_0 = x_0^2, t\geq 0.
\end{equation}
Let $ x_0 \in \R_+, \delta \in (0,1)$.
The $\delta$-dimensional Bessel process, $X$, starting from $x_0$ is characterized
as $X_t=\sqrt{S_t}$.

Let $s \in [0,T]$.
Obviously we can also consider the (unique) strong solution
$S = S^{s,x_0}$ of 
\begin{equation}\label{sqbs_times}
	dS_t = \sqrt{\vert S_t \vert}dW_t + \delta dt, S_s = x_0^2, t\geq s.
      \end{equation}
The process $S$ is necessarily non-negative because of comparison theorem. For details,
      see Proposition 2.18 in Chapter 5 of \cite{ks}.
      We set $X := X^{s,x_0}$ defined as  $X_t=\sqrt{S_t}.$
  \begin{definition}\label{smp}
Let $(\Omega, \mathcal{F}, \P)$ be a probability space and let
$ \mathfrak{F} = (\mathcal{F}_t)$ be the canonical filtration associated with a
fixed Brownian motion $W$.
Let $x_0 \in \R_+$.
We say that a continuous $\mathfrak{F}$-adapted $\R_+$-valued process $Y$
 is a {\bf solution to
  the strong martingale problem}
(related to $L^\delta$)
with respect to $D$
and $W$ (with related filtered probability space),
with initial condition $Y_s = x_0$ 
if
\begin{equation}\label{lmpBis}
	f(Y_t) - f(x_0) - \int_{s}^{t}L^\delta f(Y_r)dr =
	\int_{s}^{t}f'(Y_r)dW_r,
\end{equation}	
for all $f \in D$.
\end{definition}
\begin{remark} \label{Requivalence}
        The strong martingale problem formulation above is equivalent
to the  one, considered in  \cite{ORT1_Bessel}, where $D$
is replaced by $\shd_{L^\delta}(\R_+),$
see Proposition 3.6 of  \cite{ORT1_Bessel}.
This property can be established by density arguments and also by Remark \ref{rem:SMP} (1) in Appendix \ref{apB}.
\end{remark}

\begin{proposition} \label{PSMP}
  Let $x_0\geq 0$ and  $s \in [0,T)$. The process
  $Y = X^{s,x_0}$ is the unique solution of the strong martingale problem with respect to
(related to $L^\delta$) $D$ 
and $W$, with initial condition $Y_s = x_0$.
\end{proposition}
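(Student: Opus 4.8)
The plan is to derive the statement from the characterization obtained in \cite{ORT1_Bessel} of the low-dimensional Bessel process started at time $0$ as the unique solution of the strong martingale problem driven by $L^\delta$, by means of two elementary reductions: enlarging the test class from $D$ to $\shd_{L^\delta}(\R_+)$, and translating the time origin. For the first reduction, Remark \ref{Requivalence} (Proposition 3.6 of \cite{ORT1_Bessel}, together with the approximation in Remark \ref{rem:SMP}(1) of Appendix \ref{apB}) tells us that a continuous $\R_+$-valued $\mathfrak{F}$-adapted process $Y$ with $Y_s = x_0$ solves the strong martingale problem with respect to $D$ and $W$ if and only if it solves the one with $D$ replaced by $\shd_{L^\delta}(\R_+)$; since the two formulations thus have the same set of solutions, it suffices to prove existence and uniqueness for the larger class, which we do from now on.

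For the time translation, fix $s \in [0,T)$ and put $\widetilde{W}_u := W_{s+u} - W_s$ and $\widetilde{\mathfrak{F}}_u := \mathcal{F}_{s+u}$ for $u \geq 0$, so that $\widetilde{W}$ is a Brownian motion in the filtration $\widetilde{\mathfrak{F}}$. \emph{Existence.} By construction $X^{s,x_0}_t = \sqrt{S_t}$ with $S = S^{s,x_0}$ the solution of \eqref{sqbs_times}; writing that equation on $[s,s+u]$ and changing variables shows that $\widetilde{S}_u := S_{s+u}$ solves $d\widetilde{S}_u = \sqrt{\vert \widetilde{S}_u\vert}\,d\widetilde{W}_u + \delta\,du$ with $\widetilde{S}_0 = x_0^2$. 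By uniqueness for \eqref{sqbs_times} applied with $\widetilde{W}$ in place of $W$, $\widetilde{S}$ is indistinguishable from the squared Bessel process $S^{0,x_0}$ driven by $\widetilde{W}$, whence $\widetilde{Y}_u := X^{s,x_0}_{s+u} = \sqrt{\widetilde{S}_u}$ is the $\delta$-dimensional Bessel process started at $x_0$ at time $0$ and driven by $\widetilde{W}$. By \cite{ORT1_Bessel} this process satisfies, for every $f \in \shd_{L^\delta}(\R_+)$,
\[
f(\widetilde{Y}_u) - f(x_0) - \int_0^u L^\delta f(\widetilde{Y}_v)\,dv = \int_0^u f'(\widetilde{Y}_v)\,d\widetilde{W}_v , \qquad u \geq 0;
\]
substituting $u = t-s$ and using $\int_0^{t-s}\psi(\widetilde{Y}_v)\,d\widetilde{W}_v = \int_s^t \psi(Y_r)\,dW_r$ for $\psi \in \{L^\delta f, f'\}$ yields \eqref{lmpBis}, so $Y = X^{s,x_0}$ is a solution.

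\emph{Uniqueness.} Let $Y$ be any solution of the strong martingale problem with respect to $\shd_{L^\delta}(\R_+)$ and $W$ with $Y_s = x_0$. Reading \eqref{lmpBis} over $[s,s+u]$ and performing the same change of variables shows that $\widetilde{Y}_u := Y_{s+u}$ is a continuous $\R_+$-valued $\widetilde{\mathfrak{F}}$-adapted process solving the strong martingale problem with respect to $\shd_{L^\delta}(\R_+)$ and $\widetilde{W}$ with $\widetilde{Y}_0 = x_0$. By the uniqueness part of \cite{ORT1_Bessel}, $\widetilde{Y}$ is indistinguishable from the $\delta$-dimensional Bessel process from $x_0$ driven by $\widetilde{W}$, which by the existence step equals $X^{s,x_0}_{s+\cdot}$; hence $Y_t = X^{s,x_0}_t$ for all $t \geq s$. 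The genuinely substantial content is thus imported from the existence-and-uniqueness theorem of \cite{ORT1_Bessel}; within the present argument the only delicate point is the equivalence of the $D$- and $\shd_{L^\delta}(\R_+)$-formulations, where one localizes along the cut-off sequence $(\chi_n)$ of Appendix \ref{apB} and uses dominated convergence for the Lebesgue and the stochastic integrals to pass \eqref{lmpBis} from $f\chi_n \in D$ to $f \in \shd_{L^\delta}(\R_+)$.
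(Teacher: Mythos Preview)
Your proof is correct and follows essentially the same approach as the paper's: both reduce to the case $s=0$ and invoke the existence and uniqueness results of \cite{ORT1_Bessel} (Propositions 3.6 and 3.11), using Remark \ref{Requivalence} to pass between the $D$- and $\shd_{L^\delta}(\R_+)$-formulations. The paper dispatches the time shift with a one-line ``without loss of generality $s=0$'' while you spell out the translation $\widetilde W_u = W_{s+u}-W_s$ in full; similarly, the paper uses the inclusion $D\subset\shd_{L^\delta}(\R_+)$ directly for existence and the full equivalence only for uniqueness, whereas you invoke the equivalence up front and then work with the larger class throughout---but these are presentational rather than mathematical differences.
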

\begin{proof} \
  Without loss of generality, we can suppose
  $s = 0$.
The existence follows by
Proposition 3.6 of \cite{ORT1_Bessel} and
the fact that  $D \subset \shd_{L^\delta}(\R_+)$.
By Remark \ref{Requivalence},
a solution of the martingale problem with respect
to $D$ in the sense of Definition \ref{smp}
also fulfills the one where $D$ is replaced
by $\shd_{L^\delta}(\R_+)$.
At this point, uniqueness follows by
Proposition 3.11 of \cite{ORT1_Bessel}.
  \end{proof}

   For every $s \in [0,T)$,
  the distribution of the process $(X_t^{s,x_0})_{t \in [s,T]}$
is the same as $(X^{x_0}_{t \in [0,T-s]}),$ where $X^{x_0}$ is the classical Bessel
process starting at $x_0$. In other words, the
unique solution $Y$ mentioned in Proposition \ref{PSMP}
is a shifted Bessel process with starting point $x_0$
and dimension $\delta$.

For further details on the (strong) martingale problem, the reader may consult
\cite{ORT1_Bessel}, where we focused on the case $s = 0$, but we can easily adapt
to the present framework.
Here, we need to introduce a
time inhomogeneous version
of the strong martingale problem.

\begin{proposition}\label{nhMP}
 Let $s \in [0,T), x_0 \ge 0$. Then 
	$X:= X^{s,x_0}$ solves the (inhomogeneous) strong martingale problem
	with respect to $D$ and $W$, with initial condition $X_s = x_0$.
        This means the following: For every $u \in C^{1,2}([s,T] \times \R_+;\R_+)$
  such that $\partial_x u (s,0) = 0$, we have 
		\begin{equation} \label{EInhMP}
		u(t,X_t) = u(s,x_0) + \int_{s}^{t}(L^{\delta}u(r,X_r) + \partial_su(r,X_r))dr + \int_{s}^{t}\partial_xu(r,X_r)dW_r.
	\end{equation}


\end{proposition}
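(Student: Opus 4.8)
The plan is to derive \eqref{EInhMP} from the time-homogeneous strong martingale problem of Proposition \ref{PSMP} (equivalently, from \eqref{lmpBis}, and via Remark \ref{Requivalence} from its version with $\shd_{L^\delta}(\R_+)$ in place of $D$) by a telescoping-sum argument along a partition of $[s,T]$: the space increments will be handled with \eqref{lmpBis}, the time increments with the fundamental theorem of calculus, and an auxiliary compact-support restriction will be removed at the end by a cutoff together with a localization. Throughout I read the hypothesis as $\partialx u(r,0)=0$ for \emph{every} $r\in[s,T]$, so that $x\mapsto\partialx u(r,x)/x$ extends continuously at the origin and $u(r,\cdot)\in\shd_{L^\delta}(\R_+)$, whence $L^\delta u(r,\cdot)$ is well defined through \eqref{op}. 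Together with the path continuity of $X$ and the fact that, for $\delta\in(0,1)$, the Bessel process spends zero Lebesgue time at the origin, this makes $r\mapsto L^\delta u(r,X_r)$ a.s.\ integrable on $[s,T]$; the cancellation $\partialx u(\cdot,0)=0$ is essential here, since $\int_s^t X_r^{-1}\,dr$ itself diverges.

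I would first treat the case where $\bigcup_{r\in[s,T]}\mathrm{supp}\,u(r,\cdot)$ is contained in a fixed compact interval, so that $u,\partials u,\partialx u,\partialxx u$ are bounded and uniformly continuous on $[s,T]\times\R_+$, $u(r,\cdot)\in D$ for each $r$, and $L^\delta u$ is bounded on $[s,T]\times(0,\infty)$. Fixing a partition $s=t_0<\dots<t_N=t$ of mesh $|\pi|$, I telescope
\[
u(t,X_t)-u(s,x_0)=\sum_{j=0}^{N-1}\bigl(u(t_{j+1},X_{t_{j+1}})-u(t_{j+1},X_{t_j})\bigr)+\sum_{j=0}^{N-1}\bigl(u(t_{j+1},X_{t_j})-u(t_j,X_{t_j})\bigr),
\]
apply \eqref{lmpBis} with $f=u(t_{j+1},\cdot)\in D$ at the times $t_{j+1}$ and $t_j$ and subtract, rewriting the $j$-th term of the first sum as $\int_{t_j}^{t_{j+1}}L^\delta u(t_{j+1},X_r)\,dr+\int_{t_j}^{t_{j+1}}\partialx u(t_{j+1},X_r)\,dW_r$, and express the $j$-th term of the second sum as $\int_{t_j}^{t_{j+1}}\partials u(\rho,X_{t_j})\,d\rho$. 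Letting $|\pi|\to0$, the $dr$-integrals converge pathwise to $\int_s^t L^\delta u(r,X_r)\,dr$ and $\int_s^t\partials u(r,X_r)\,dr$ (by continuity of $L^\delta u$ and $\partials u$ and bounded convergence), while the stochastic term converges in $L^2(\P)$ to $\int_s^t\partialx u(r,X_r)\,dW_r$ (by the It\^o isometry, the frozen-time integrand converging in $L^2([s,t]\times\Omega)$); since the left-hand side does not depend on the partition, \eqref{EInhMP} follows in this case.

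To remove the compact-support restriction I would introduce cutoffs $\chi_n\in C^\infty(\R_+)$ with compact support, $\chi_n\equiv1$ on $[0,n]$, $\chi_n(0)=1$ and $\chi_n'(0)=\chi_n''(0)=0$. Then $u_n:=u\,\chi_n$ satisfies the hypotheses of the previous step (in particular $\partialx u_n(r,0)=\partialx u(r,0)\chi_n(0)+u(r,0)\chi_n'(0)=0$), so \eqref{EInhMP} holds with $u_n$ in place of $u$. On the event $A_m:=\{\sup_{r\in[s,T]}X_r\le m\}$, for $n>m$ one has $\chi_n\equiv1$, hence $\chi_n'=\chi_n''=0$, on $[0,m]$, so that $u_n$, $\partials u_n$, $\partialx u_n$ and $L^\delta u_n$ coincide with $u$, $\partials u$, $\partialx u$ and $L^\delta u$ at every point $(r,X_r)$ with $r\in[s,T]$; thus the identity for $u_n$, restricted to $A_m$, is exactly the claimed identity for $u$. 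Since $\P(A_m)\to1$ as $m\to\infty$ by path continuity of $X$ on the compact interval $[s,T]$, \eqref{EInhMP} follows in general.

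The main obstacle is the singular drift term $r\mapsto L^\delta u(r,X_r)$: one has to guarantee both that $L^\delta u(r,\cdot)$ is meaningful — which forces the boundary condition to hold at all times rather than only at $s$ — and that this process is a.s.\ integrable, which is delicate precisely because $\int_s^t X_r^{-1}\,dr=\infty$ when $\delta\in(0,1)$. A secondary technical point is the $L^2$-convergence of the approximating stochastic integrals, which is why the compact-support case (where $\partialx u$ is bounded) is isolated before being globalized by localization; beyond these, the argument is routine.
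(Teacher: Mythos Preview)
Your proof is correct and follows the same two-step structure as the paper's: first establish \eqref{EInhMP} for $u$ with $u(r,\cdot)\in D$ for all $r$, then pass to the general case via the cutoffs $\chi_n$ and a localization on $\{\sup_{r}X_r\le m\}$. The only difference is that for the first step the paper simply cites an external reference (Theorem~4.18 of \cite{BandiniRusso_RevisedWeakDir}), whereas you spell out the standard telescoping/Riemann-sum argument that underlies that reference; your reading of the boundary hypothesis as $\partial_x u(r,0)=0$ for all $r$ is also the one implicitly used in the paper (cf.\ Remark~\ref{rem:SMP}(2)).
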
	
\begin{proof} \
  Without loss of generality, we set $s = 0$.
   Suppose first that $u \in C^1([0,T]; D)$,
   where $D$ is equipped with the norm introduced
   in Remark \ref{RSetD}.
        In this case, the result follows by standard arguments, see
       e.g. Theorem 4.18 e.g. \cite{BandiniRusso_RevisedWeakDir}.
       In the general case we consider again the sequence $(\chi_n)$
       defined in \eqref{eq:chin} and we set
       $ u_n(t,x) := (u \chi_n)(t,x), (t,x) \in [s,T] \times \R_+$.
       By Remark \ref{rem:SMP} (2),  $u_n$ belongs to $C^1([0,T]; D)$ so that
       \eqref{EInhMP} holds for $u_n$.
       Moreover, by the same Remark,
       $u_n$ (resp. $\partial_x u_n, L_x^\delta u_n$) converges to $u$ (resp.
 $\partial_{x}u, L_x^\delta u$)
       uniformly on compact subsets
       of $[0,T] \times \R_+$. It follows 
     by standard arguments that 
     \eqref{EInhMP} also holds for
 $u \in C^{1,2}([s,T] \times \R_+;\R_+)$
  such that $\partial_x u (s,0) = 0$.
\end{proof}

\subsection{Transition semigroup of Bessel process}

$ $

In this section, we present some properties associated to the Bessel process marginal laws. We refer the reader to e.g. Chapter 6 and Appendix A of
\cite{Jean}, for more details.
The reader may also consult Chapter XI 
in \cite{Yor} for related properties
and in particular the discussion after Definition (1.9) of the same book.

 We denote by $\shb_b(\R_+)$ as the linear space of Borel bounded
      functions $f:\R_+ \rightarrow \R$.
The marginal distributional law of
$X_t, t > 0,$ starting from $x\geq 0$ is given by
\begin{equation}\label{density1}
  \E[f(X_t)] = 
  \int_{\R_+}p^{\delta}_t(x,y)f(y)dy,
\end{equation}
for $f \in \shb_b(\R_+)$. Here,  
\begin{equation}\label{Bd}
	p^{\delta}_t(x,y):=\dfrac{y}{t}\left(\dfrac{y}{x}\right)^{\nu}\exp\left(-\dfrac{x^2 + y^2}{2t}\right)I_{\nu}\left(\dfrac{xy}{t}\right), \ \ t, x,y>0,
      \end{equation}
      and for $t,y > 0$ and $x=0$, 
\begin{equation}\label{Bd0}
  p^{\delta}_t(0,y) := 2^{-\nu}t^{-(\nu + 1)}[\Gamma(\nu + 1)]^{-1}y^{2\nu + 1}
  \exp\left(-\dfrac{y^2}{2t}\right),
\end{equation}
where $\nu = \frac{\delta}{2}-1$. Here, $I_{\nu}$ is the so-called modified
Bessel function (see \cite{abra} p.g. 374 and \cite{watson} p.g. 77) and $\Gamma$ is the Gamma function, see \cite{abra} p.g 255.

\begin{remark}\label{r1.0}
  For $t,x,y>0$, one can easily check that $p^{\delta}_t(x,y)y^{1 - \delta} = p^{\delta}_t(y,x)x^{1-\delta}$.
\end{remark}

\begin{remark}\label{RBd}
  For $0 < t$ and  $ y > 0,$
  $x\mapsto p^{\delta}_t(x,y)$ is of class
  $C^1( (0,+\infty))$ (therefore absolutely continuous) and
\begin{equation}\label{dxBd}
 	\partial_{x}p^{\delta}_{t}(x,y) = \frac{1}{2t}(p^{\delta + 2}_{t}(x, y) - p^{\delta}_{t}(x, y)).
      \end{equation}
      This is a consequence of the recursive property $\partial_x I_{\nu}(x) = I_{\nu + 1}(x) + \frac{\nu}{x}I_{\nu}(x)$ (see (9.6.26) in Chapter 9 of \cite{abra} or Section 3.71 of \cite{watson}). In fact  \eqref{dxBd}
     comes out differentiating
     \eqref{Bd}.
      \end{remark}
     
     For each $ f \in \shb_b(\R_+)$,  we denote
\begin{equation} \label{density}
  P^{\delta}_t[f](x) :=  \left
    \{
\begin{array}{ccc}
   f(x) &;&  t = 0 \\
  \int_{\R_+}p^{\delta}_t(x,y)f(y)dy &;& t > 0.
\end{array}
\right.
  \end{equation}    

 
 \begin{proposition}\label{wellP}
  For a given $t\ge 0$, the mapping
    $P^{\delta}_t:\shb_b(\R_+) \cap  L^2(d\mu) \subset L^2(d\mu) \rightarrow L^2(d\mu)$,
    given by $\eqref{density}$,
      has the contraction property, see item (3) of Definition \ref{d2}.
\end{proposition}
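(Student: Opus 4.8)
The plan is to prove the stated contraction property in the concrete form $\|P^\delta_t f\|_{L^2(d\mu)}\le \|f\|_{L^2(d\mu)}$ for every $f\in\shb_b(\R_+)\cap L^2(d\mu)$, which will simultaneously yield that $P^\delta_t f$ indeed belongs to $L^2(d\mu)$, so that the map is well defined as stated. The case $t=0$ is trivial since $P^\delta_0=\mathrm{Id}$, so I fix $t>0$ from now on.

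First I would record two elementary facts about the kernel. \emph{Conservativity}: applying \eqref{density1} to the constant function $f\equiv 1\in\shb_b(\R_+)$ gives $\int_{\R_+}p^\delta_t(x,y)\,dy=\E[1]=1$ for every $x\ge 0$, so that $p^\delta_t(x,\cdot)\,dy$ is a probability measure on $\R_+$ (the boundary point $0$ being instantaneously reflecting, this holds also in the low-dimension regime). \emph{Reversibility with respect to $\mu$}: Remark \ref{r1.0}, rewritten for $x,y>0$, reads $p^\delta_t(x,y)\,x^{\delta-1}=p^\delta_t(y,x)\,y^{\delta-1}$. Joint measurability of $(x,y)\mapsto p^\delta_t(x,y)$ together with boundedness of $f$ guarantees that $x\mapsto P^\delta_t f(x)$ is a well-defined Borel function.

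Next I would apply Jensen's inequality (equivalently Cauchy--Schwarz) with respect to the probability measure $p^\delta_t(x,\cdot)\,dy$ to obtain, for every $x\ge 0$,
\begin{equation*}
  |P^\delta_t f(x)|^2=\Bigl|\int_{\R_+}p^\delta_t(x,y)f(y)\,dy\Bigr|^2\le\int_{\R_+}p^\delta_t(x,y)f(y)^2\,dy .
\end{equation*}
Integrating this against $\mu(dx)=x^{\delta-1}\mathds{1}_{\{x>0\}}\,dx$, interchanging the order of integration by Tonelli's theorem (the integrand is non-negative), then using the reversibility identity and conservativity once more, I get
\begin{align*}
  \|P^\delta_t f\|_{L^2(d\mu)}^2
  &\le\int_{\R_+}f(y)^2\Bigl(\int_{\R_+}p^\delta_t(x,y)\,x^{\delta-1}\,dx\Bigr)dy
   =\int_{\R_+}f(y)^2\,y^{\delta-1}\Bigl(\int_{\R_+}p^\delta_t(y,x)\,dx\Bigr)dy\\
  &=\int_{\R_+}f(y)^2\,y^{\delta-1}\,dy=\|f\|_{L^2(d\mu)}^2 .
\end{align*}
In particular $P^\delta_t f\in L^2(d\mu)$ and the contraction property follows.

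I do not expect a genuine obstacle: the argument is just the combination of the conservativity of the Bessel semigroup (which is \eqref{density1} with $f\equiv 1$) and the $\mu$-reversibility of its transition density recorded in Remark \ref{r1.0}. The only points demanding a little care are measure-theoretic bookkeeping — the legitimacy of the Tonelli interchange (secured by non-negativity) and the fact that the pointwise kernel identity of Remark \ref{r1.0}, stated for $x,y>0$, may be used freely under the double integral since $\{x=0\}\cup\{y=0\}$ is Lebesgue-null — together with the observation that $f\in L^2(d\mu)$ means exactly $f^2\in L^1(d\mu)$, which is what makes the final equality finite.
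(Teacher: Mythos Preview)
Your proof is correct and follows essentially the same approach as the paper: Jensen's inequality against the probability kernel $p^\delta_t(x,\cdot)\,dy$, Tonelli to swap the order of integration, and the $\mu$-reversibility identity of Remark \ref{r1.0} combined with conservativity to collapse the inner integral to $1$. The only cosmetic difference is that the paper keeps the measure $\mu$ in the notation throughout, whereas you expand $\mu(dx)=x^{\delta-1}\,dx$ explicitly.
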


\begin{proof}
Let $f \in \shb_b(\R_+) \cap L^2(d\mu) $.
  When $t = 0$ the statements are obviously true. So we suppose $t >0$.
   Since $p^{\delta}_t$ is a probability density, we can apply Jensen's inequality and deduce that $(P^{\delta}_t[f](x))^2\leq P^{\delta}_t[f^2](x)$ for every $x \in \R_+$. By integrating on both sides of that inequality, using expression \eqref{density} and applying Tonelli's theorem, we get   
	\begin{eqnarray*}
          ||P^{\delta}_t[f]||^2_{L^2(d\mu)} &=& \int_{\R_+}(P^{\delta}_t[f](x))^2\mu(dx) \leq \int_{\R_+}\int_{\R_+}p^{\delta}_t(x,y)f^2(y)dy\mu(dx) \\
         &=& \int_{\R_+}f^2(y)\int_{\R_+}p^{\delta}_t(x,y)\mu(dx)dy  \\ &=& \int_{\R_+}f^2(y)\int_{\R_+}p^{\delta}_t(x,y)y^{1-\delta}\mu(dx)\mu(dy).
	\end{eqnarray*}
	
	By Remark \ref{r1.0}, the latter expression is equal to 
	\begin{eqnarray*}
          \int_{\R_+}f^2(y)\int_{\R_+}p^{\delta}_t(y,x)x^{1-\delta}\mu(dx)\mu(dy)
          &=& \int_{\R_+}f^2(y)\int_{\R_+}p^{\delta}_t(y,x)dx\mu(dy) \\
          &=& \int_{\R_+}f^2(y)\mu(dy) = ||f||^2_{L^2(d\mu)}.
	\end{eqnarray*}
\end{proof}

Since $\shb_b(\R_+) \cap  L^2(d\mu)$ is dense in $L^2(d\mu)$,
Proposition \ref{wellP}  allows us to extend continuously $P_t^\delta$
to $L^2(d\mu)$. We  will of course adopt the same notation
for that extension.
\begin{proposition}\label{c1}
  For each $t \ge 0$, 
    $P^{\delta}_t:    L^2(d\mu) \rightarrow L^2(d\mu)$ has
  the contraction property and, in particular, it is
continuous with respect to the $L^2(d\mu)$-norm. Moreover, for every $f \in L^2(d\mu)$
  \begin{equation} \label{ExtEquality}
P^\delta_t[f](x) = \int_{\R_+} p^\delta_t(x,y) f(y) dt, \ x \ {\rm a.e.}
\end{equation}
and, for every $f, g \in L^2(d\mu)$ and $t \ge 0$, 
\begin{equation} \label{Psymmetry}
  \langle P^\delta_t[f], g \rangle_{L^2(d\mu)} =
  \langle f,  P^\delta_t[g] \rangle_{L^2(d\mu)}.
  \end{equation}
\end{proposition}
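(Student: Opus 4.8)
The plan is to prove Proposition \ref{c1} in three steps, matching its three assertions, always exploiting that each statement is already known on the dense subspace $\shb_b(\R_+) \cap L^2(d\mu)$ and extends by continuity. First, for the contraction property on all of $L^2(d\mu)$: Proposition \ref{wellP} gives $\|P^\delta_t[f]\|_{L^2(d\mu)} \le \|f\|_{L^2(d\mu)}$ for $f$ in the dense set $\shb_b(\R_+) \cap L^2(d\mu)$; since the continuous extension of a linear contraction defined on a dense subspace is again a contraction (the bound passes to the limit), $P^\delta_t$ is a contraction on $L^2(d\mu)$, hence continuous. One should also remark that the contraction property in the sense of item (3) of Definition \ref{d2} — presumably positivity-preserving and sub-Markovian — passes to the limit as well, using that the approximating sequence can be taken nonnegative when $f \ge 0$.

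Second, for the integral representation \eqref{ExtEquality}: given $f \in L^2(d\mu)$, pick $f_n \in \shb_b(\R_+) \cap L^2(d\mu)$ with $f_n \to f$ in $L^2(d\mu)$ (for instance truncations $f_n = (f \wedge n) \vee (-n) \mathds{1}_{[1/n,n]}$). Then $P^\delta_t[f_n] \to P^\delta_t[f]$ in $L^2(d\mu)$, so along a subsequence the convergence is $d\mu$-a.e., hence $dx$-a.e. on $\R_+$. On the other hand, for fixed $x > 0$ the linear functional $g \mapsto \int_{\R_+} p^\delta_t(x,y) g(y)\,dy$ is continuous on $L^2(d\mu)$: by Remark \ref{r1.0} and the computation in the proof of Proposition \ref{wellP}, $\int_{\R_+} p^\delta_t(x,y) |g(y)|\,dy \le \big(\int_{\R_+} p^\delta_t(x,y) y^{1-\delta}\,dy\big)^{1/2} \|g\|_{L^2(d\mu)}$ by Cauchy–Schwarz, and the weight integral is finite (equal to $1$ after the symmetry substitution). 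Hence $\int p^\delta_t(x,\cdot) f_n \to \int p^\delta_t(x,\cdot) f$ for each fixed $x > 0$, and matching the two limits yields \eqref{ExtEquality} for a.e.\ $x$.

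Third, for the symmetry \eqref{Psymmetry}: I would first establish it on the dense subspace and then extend bilinearly. For $f, g \in \shb_b(\R_+) \cap L^2(d\mu)$, write out both sides using \eqref{density}, apply Tonelli/Fubini (justified because $\int\!\int p^\delta_t(x,y)|f(y)||g(x)|\,dy\,\mu(dx) < \infty$ by the same Cauchy–Schwarz bound as above), and use the detailed-balance identity $p^\delta_t(x,y)\, y^{1-\delta} = p^\delta_t(y,x)\, x^{1-\delta}$ of Remark \ref{r1.0} together with $\mu(dx) = x^{\delta-1}dx$ to swap the roles of $x$ and $y$; this gives $\langle P^\delta_t[f], g\rangle_{L^2(d\mu)} = \langle f, P^\delta_t[g]\rangle_{L^2(d\mu)}$. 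Then, for general $f, g \in L^2(d\mu)$, approximate both by sequences in the dense set and pass to the limit using the $L^2(d\mu)$-continuity of $P^\delta_t$ and of the inner product.

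The main obstacle is the second step: reconciling the abstract $L^2(d\mu)$-limit defining $P^\delta_t[f]$ with the pointwise integral $\int p^\delta_t(x,y) f(y)\,dy$. One must be careful that $f$ need not be integrable against $p^\delta_t(x,\cdot)\,dy$ in an obvious way — integrability against $dy$ is not the same as membership in $L^2(d\mu)$ — so the Cauchy–Schwarz estimate with the weight $y^{1-\delta}$ is essential, and one needs the finiteness $\int_{\R_+} p^\delta_t(x,y) y^{1-\delta}\,dy < \infty$ for each fixed $x > 0$, which follows from Remark \ref{r1.0} and the fact that $\int p^\delta_t(y,x)\,dx = 1$. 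Once that continuity of the pointwise integral functional is in hand, the a.e.\ identification is routine. The first and third steps are then essentially bookkeeping with density and Fubini.
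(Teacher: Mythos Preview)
Your overall plan is sound and close to the paper's, but step two contains a concrete error in the Cauchy--Schwarz estimate. You claim
\[
\int_{\R_+} p^\delta_t(x,y)\,|g(y)|\,dy \;\le\; \Big(\int_{\R_+} p^\delta_t(x,y)\,y^{1-\delta}\,dy\Big)^{1/2}\,\|g\|_{L^2(d\mu)},
\]
but splitting $p^\delta_t(x,y)|g(y)| = \big(p^\delta_t(x,y)\,y^{(1-\delta)/2}\big)\cdot\big(|g(y)|\,y^{(\delta-1)/2}\big)$ and applying Cauchy--Schwarz in $L^2(dy)$ actually produces the factor $\big(\int p^\delta_t(x,y)^{2}\,y^{1-\delta}\,dy\big)^{1/2}$, with a \emph{square} on the kernel. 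Your parenthetical ``equal to $1$ after the symmetry substitution'' is also off: Remark~\ref{r1.0} turns $\int p^\delta_t(x,y)\,y^{1-\delta}\,dy$ into $x^{1-\delta}\int p^\delta_t(y,x)\,dy$, and the latter integral is over the \emph{starting} point, which is not normalized to $1$. The fix is easy---by Remark~\ref{r1.0} and Chapman--Kolmogorov, $\int p^\delta_t(x,y)^2\,y^{1-\delta}\,dy = x^{1-\delta}\,p^\delta_{2t}(x,x) < \infty$, so the functional $g\mapsto\int p^\delta_t(x,\cdot)\,g$ \emph{is} continuous on $L^2(d\mu)$ for each fixed $x>0$---but the inequality you wrote does not hold as stated.

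By contrast, the paper avoids this estimate altogether: it decomposes $f=f^+-f^-$, reduces to $f\ge 0$, takes monotone truncations $f^m=f\wedge m$, and lets $m\to\infty$. On the right-hand side one uses monotone convergence (for the nonnegative integrand against the probability measure $p^\delta_t(x,\cdot)\,dy$), and on the left-hand side one extracts an a.e.\ convergent subsequence from the $L^2(d\mu)$-convergence $P^\delta_t[f^m]\to P^\delta_t[f]$. This is more elementary in that it never needs the kernel to lie in any weighted $L^2$ space. For the symmetry \eqref{Psymmetry}, the paper also proceeds slightly more directly than you: once \eqref{ExtEquality} is known for all $f\in L^2(d\mu)$, one writes out both sides as double integrals and applies Remark~\ref{r1.0} with Fubini, with no separate density argument needed. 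Your density route for \eqref{Psymmetry} is of course also correct. Finally, the aside about ``positivity-preserving and sub-Markovian'' in step one is unnecessary: in Definition~\ref{d2}(3) the contraction property is simply the norm bound $\|P_t u\|_H\le\|u\|_H$, which passes to the closure automatically.
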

\begin{proof}
  The first part holds true because $\shb_b(\R_+) \cap  L^2(d\mu)$ is dense
  in $L^2(d\mu)$ and the fact that $t \mapsto P_t[f]$
  is continuous. Concerning 
  \eqref{ExtEquality}, let us fix $f \in L^2(d\mu)$. Decomposing $f$ into $ f = f^+ - f^-$,
  we are allowed to suppose that $f$ is non-negative.
  We remark indeed that $f^+$ and $f^-$ still belong to $L^2(d\mu)$.
  For each $m > 0$ we denote
  $$ f^m(x) = (f(x) \vee (-m)) \wedge m, \ x \ge 0.$$
  Since $f^m$ is a bounded Borel function,
  we have
  \begin{equation} \label{ExtEqualityTrunc}
    P^\delta_t[f^m](x) = \int_{\R_+} p_t^{\delta}(x,y) f^m(y) dy, \  \forall x >0.
     \end{equation}
Let us fix $x > 0$.
Since $p_t^{\delta}(x,y) dy$ is a Borel probability measure,
letting  $m$ going to infinity,
the right-hand side of
\eqref{ExtEqualityTrunc} converges to 
the right-hand side of \eqref{ExtEquality},
which could theoretically be infinite.

Now each $f^m$ also belongs to $L^2(d\mu)$:
we remark that, for this, $\{0\}$ is not relevant since $\mu(\{0\}) = 0$.
Since $(f^m)$ converges in $L^2(d\mu)$ to $f$,
then $(P^\delta_t[f^m])$ converges in $L^2(d\mu)$ to $P^\delta_t[f]$, there is a subsequence $(m_k)$  such that
$(f^{m_k})$ converges a.e. to $f$  
and  $P^\delta_t[f^{m_k}]$ converges a.e. to $P^\delta_t[f]$. 
Consequently for $x >0$ a.e.
$$ P_t^\delta[f](x) = \lim_{k \rightarrow +\infty}  P^\delta_t[f^{m_k}](x)
=  \lim_{k \rightarrow +\infty}\int_{\R_+} p_t^{\delta}(x,y) f^{m_k}(y) dy =
\int_{\R_+} p_t^{\delta}(x,y) f(y) dy.$$
In particular, for almost all $x \ge 0$, 
the right-hand side of \eqref{ExtEquality}
is finite.

Concerning \eqref{Psymmetry}, let us suppose $t > 0$. By using Remark \ref{r1.0}, we get
\begin{eqnarray*}
	\langle P^{\delta}_t[f],g\rangle_{L^2(d\mu)} &=& \int_{\R_+}g(x)\int_{\R_+}p^{\delta}_t(x,y)f(y)d\mu(x)\\
&=& \int_{\R_+}g(x)\int_{\R_+}p^{\delta}_t(x,y)f(y)y^{1-\delta}d\mu(y)d\mu(x) \\
                                                     &=&\int_{\R_+}g(x)\int_{\R_+}p^{\delta}_t(y,x)f(y)x^{1-\delta}d\mu(y)d\mu(x) \\
  &=&
	\int_{\R_+}f(y)\int_{\R_+}p^{\delta}_t(y,x)g(x)x^{1-\delta}d\mu(x)d\mu(y)\\
	&=& \int_{\R_+}f(y)P^{\delta}_t[g](y)d\mu(y) = \langle f,P^{\delta}[g]\rangle_{L^2(d\mu)}.
\end{eqnarray*}
\end{proof}
We observe that \eqref{Psymmetry} implies that the operators  $(P^{\delta}_t)$ 
are symmetric maps in the sense of item (1) of Definition \ref{d2}, with $H = L^2(d\mu).$

\begin{proposition}\label{c1Bis}
The measure $\mu$ is invariant with respect to $(P^{\delta}_t)$ in the sense that
 for every $f \in   L^1(d\mu) \cap  L^2(d\mu)$ and $t\ge 0$, we have
 \begin{equation} \label{muInvariant}
   \int_{\R_+}  P^\delta_t[f](x) d\mu(x)  =  \int_{\R_+} f(x) d\mu(x).
   \end{equation}
\end{proposition}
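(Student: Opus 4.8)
The plan is to establish the invariance identity \eqref{muInvariant} by reducing it to the symmetry relation \eqref{Psymmetry} already proved in Proposition \ref{c1}. The key observation is that formally $\int_{\R_+} P^\delta_t[f](x)\, d\mu(x) = \langle P^\delta_t[f], \mathds{1}\rangle_{L^2(d\mu)} = \langle f, P^\delta_t[\mathds{1}]\rangle_{L^2(d\mu)}$, and since the transition density integrates to one, $P^\delta_t[\mathds{1}] = \mathds{1}$, so the right-hand side becomes $\langle f, \mathds{1}\rangle_{L^2(d\mu)} = \int_{\R_+} f(x)\, d\mu(x)$. The only issue is that $\mathds{1} \notin L^2(d\mu)$ (since $\mu$ is infinite), so \eqref{Psymmetry} cannot be applied directly with $g = \mathds{1}$; this is the main obstacle and requires a truncation/approximation argument.

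First I would fix $f \in L^1(d\mu)\cap L^2(d\mu)$ and $t > 0$ (the case $t = 0$ being trivial). I would introduce a sequence of cutoff functions $g_n \in \shb_b(\R_+)\cap L^2(d\mu)$ with $0 \le g_n \le 1$, $g_n \uparrow \mathds{1}$ pointwise; for instance one may take the functions $\chi_n$ already used elsewhere in the paper, or simply $g_n = \mathds{1}_{[1/n, n]}$. For each fixed $n$, since both $f$ (hence $|f| \in L^2(d\mu)$) and $g_n$ lie in $L^2(d\mu)$, I can apply \eqref{Psymmetry}, more precisely its consequence for the integral representation \eqref{ExtEquality}, to write
\begin{equation*}
\int_{\R_+} \Bigl(\int_{\R_+} p^\delta_t(x,y) f(y)\, dy\Bigr) g_n(x)\, d\mu(x)
= \int_{\R_+} f(y) \Bigl(\int_{\R_+} p^\delta_t(y,x) g_n(x)\, dx\Bigr) d\mu(y).
\end{equation*}
Here I should be slightly careful and first run the argument for $f \ge 0$, decomposing $f = f^+ - f^-$ afterwards by linearity (as is done in the proof of Proposition \ref{c1}); for $f \ge 0$ everything in sight is non-negative and Tonelli applies freely.

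Next I would pass to the limit $n \to \infty$ on both sides using monotone convergence. On the right-hand side, $\int_{\R_+} p^\delta_t(y,x) g_n(x)\, dx \uparrow \int_{\R_+} p^\delta_t(y,x)\, dx = 1$ for each $y > 0$ because $p^\delta_t(y,\cdot)$ is a probability density, so monotone convergence gives $\int_{\R_+} f(y)\, d\mu(y)$, which is finite since $f \in L^1(d\mu)$. On the left-hand side, $P^\delta_t[f](x) g_n(x) \uparrow P^\delta_t[f](x)$ for a.e. $x$ (using that $P^\delta_t[f] \ge 0$ when $f \ge 0$), so monotone convergence gives $\int_{\R_+} P^\delta_t[f](x)\, d\mu(x)$; the finiteness of this quantity is then a byproduct of the identity. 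Finally, applying this to $f^+$ and $f^-$ separately and subtracting yields \eqref{muInvariant} for general $f \in L^1(d\mu)\cap L^2(d\mu)$. The only technical point deserving a line of justification is the interchange of integrals (Tonelli) in the displayed equation and the a.e.\ validity of the integral representation \eqref{ExtEquality}, both of which are already available from Remark \ref{r1.0} and Proposition \ref{c1}.
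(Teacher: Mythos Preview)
Your proof is correct and rests on the same two ingredients as the paper's: the density symmetry $p^{\delta}_t(x,y)y^{1-\delta}=p^{\delta}_t(y,x)x^{1-\delta}$ from Remark \ref{r1.0} and the fact that $p^{\delta}_t(y,\cdot)$ is a probability density. The paper, however, skips the cutoff approximation entirely: it writes
\[
\int_{\R_+} P^\delta_t[f](x)\,\mu(dx)
= \int_{\R_+}\int_{\R_+} p^{\delta}_t(x,y)\,f(y)\,y^{1-\delta}\,\mu(dy)\,\mu(dx),
\]
uses Remark \ref{r1.0} to turn this into $\int_{\R_+} f(y)\big(\int_{\R_+} p^{\delta}_t(y,x)\,dx\big)\mu(dy)=\int_{\R_+} f(y)\,\mu(dy)$, and invokes Tonelli/Fubini directly (the double integral with $|f|$ is finite since $f\in L^1(d\mu)$). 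Your truncation by $g_n\uparrow \mathds 1$ is a perfectly valid workaround for the fact that $\mathds 1\notin L^2(d\mu)$, but it is not needed once you work at the level of the density rather than through the $L^2$ symmetry \eqref{Psymmetry}; the paper's route is therefore a bit shorter, while yours makes the role of $P^\delta_t[\mathds 1]=\mathds 1$ more transparent.
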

\begin{proof}
  Taking into account the definition of $\mu$ and  Remark \ref{r1.0}, 
the left-hand side of  \eqref{muInvariant}  equals
  	\begin{eqnarray*}
          \int_{\R_+}f(y)\int_{\R_+}p^{\delta}_t(x,y) y^{1-\delta}\mu(dy)\mu(dx)
          &=&  \int_{\R_+}f(y)\int_{\R_+}p^{\delta}_t(y,x) x^{1-\delta}
              \mu(dx)\mu(dy)
          \\
          &=& \int_{\R_+}f(y)\int_{\R_+}p^{\delta}_t(y,x) dx \mu(dy) \\
          &=& \int_{\R_+}f(y)\mu(dy),
	\end{eqnarray*}
        since $ \int_{\R_+}p^{\delta}_t(y,x)dx = 1$.

  \end{proof}



	
\begin{remark} \label{RFeller}
The discussion just after Definition 1.9 of \cite{Yor} in Chapter XI
says that the family $(P_t^\delta)$ defines a Feller semigroup
on 
 the space $C_0(\R_+)$ of all real continuous functions
defined on $\R_+$ and vanishing at infinity. The definition of Feller semigroup is given in Definition 2.1 in Chapter III of the same book.
\end{remark}
In particular we have the following.

\begin{lemma}\label{semipro}
  Let $f \in C_0(\R_+)$.
  \begin{enumerate}
    \item For all $t \ge 0$ we have $P_t[f] \in C_0(\R_+)$.
    \item	For $0\le s \le t,$ $P^\delta_{s+t}[f] = P^\delta_s[P^\delta_t[f]]$. 
    \item $\displaystyle\lim_{t\downarrow 0} \sup_x \lvert P^\delta_t [f](x) - f(x) \rvert = 0$.
      \end{enumerate}
    \end{lemma}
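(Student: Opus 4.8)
The plan is to derive all three items from the fact recalled in Remark~\ref{RFeller}: the family $(P^\delta_t)$ is a Feller semigroup on $C_0(\R_+)$ in the sense of Definition~2.1 of Chapter~III of \cite{Yor}. Item (1) is then nothing but the defining property that each $P^\delta_t$ maps $C_0(\R_+)$ into itself. If one wanted a self-contained argument for $t>0$, continuity of $P^\delta_t[f]$ would follow from \eqref{density}, the continuity of $x\mapsto p^\delta_t(x,y)$ recorded in Remark~\ref{RBd}, and dominated convergence, while the decay at infinity would come from the Gaussian-type factor in \eqref{Bd}; but invoking the Feller property is shorter. Likewise, item (3) --- the uniform right-continuity at $0$ --- is exactly the strong-continuity requirement contained in that same definition.

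For item (2) I would invoke the Markov property of the Bessel process, which is available since $X^{x}$ (the Bessel process started at $x$) solves the strong martingale problem of Proposition~\ref{PSMP} and is a Feller process. For $f\in C_0(\R_+)$ and $s,t\ge 0$, using $P^\delta_t[f]\in C_0(\R_+)\subset\shb_b(\R_+)$ from item (1),
\begin{equation*}
  P^\delta_{s+t}[f](x)=\E\big[f(X^{x}_{s+t})\big]
  =\E\big[\E[f(X^{x}_{s+t})\mid\shf_s]\big]
  =\E\big[P^\delta_t[f](X^{x}_s)\big]
  =P^\delta_s\big[P^\delta_t[f]\big](x),
\end{equation*}
the cases $s=0$ or $t=0$ being trivial from \eqref{density}. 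Equivalently, this is precisely the semigroup axiom contained in the definition of a Feller semigroup, and it may also be read analytically as the Chapman--Kolmogorov identity $\int_{\R_+}p^\delta_s(x,z)\,p^\delta_t(z,y)\,dz=p^\delta_{s+t}(x,y)$ for the densities in \eqref{Bd}.

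I do not expect a genuine obstacle: the lemma is essentially a repackaging of the Feller property asserted in Remark~\ref{RFeller}. The only mildly delicate point is the passage from pointwise to uniform convergence in (3) near the boundary point $0$, where the low-dimensional Bessel process is genuinely singular; but this is already subsumed in the cited result from \cite{Yor} (and, in any case, follows from the standard fact that for a sub-Markovian contraction semigroup preserving $C_0$, pointwise strong continuity automatically upgrades to norm strong continuity).
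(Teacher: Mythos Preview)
Your proposal is correct and matches the paper's approach exactly: the paper presents the lemma immediately after Remark~\ref{RFeller} with the connective ``In particular we have the following,'' i.e.\ it treats all three items as direct consequences of the Feller semigroup property cited from \cite{Yor}, without writing any further proof. Your additional self-contained remarks and the Markov-property argument for item~(2) are fine but go beyond what the paper records.
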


    \subsection{$L^\delta$    as restriction of the Bessel infinitesimal generator.}
\
\  

A consequence of Lemma \ref{semipro} is the following.
      
\begin{proposition}\label{timecont}
   For a given $f\in D$, the map $t \mapsto P^{\delta}_t[f]$ is differentiable at $t=0$ with values in  $L^2(d\mu)$. Moreover, $\displaystyle\lim_{t\downarrow 0}\dfrac{P^{\delta}_t[f] - f}{t} = L^{\delta}f$ in $L^2(d\mu)$, for each $f\in D$.
\end{proposition}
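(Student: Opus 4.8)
The plan is to show the pointwise-in-$x$ Taylor expansion of $P^\delta_t[f](x)$ around $t=0$ is governed by $L^\delta f$, and then upgrade this to $L^2(d\mu)$-convergence by a dominated convergence argument. First I would fix $f \in D$, so $f \in C^2_0(\R_+)$ with $f'(0)=0$; in particular $f$ and its derivatives are bounded and supported in some $[0,R]$. The natural starting point is the inhomogeneous strong martingale problem of Proposition \ref{nhMP} (or rather its autonomous special case, Definition \ref{smp} / Proposition \ref{PSMP}): for the Bessel process $X = X^{0,x}$ starting at $x$, one has
\begin{equation*}
f(X_t) - f(x) - \int_0^t L^\delta f(X_r)\,dr = \int_0^t f'(X_r)\,dW_r .
\end{equation*}
Since $f' \in C_0(\R_+)$ is bounded, the stochastic integral is a true martingale with zero expectation, so taking expectations yields $P^\delta_t[f](x) - f(x) = \int_0^t P^\delta_r[L^\delta f](x)\,dr = \int_0^t \E[L^\delta f(X_r^{0,x})]\,dr$, hence
\begin{equation*}
\frac{P^\delta_t[f](x) - f(x)}{t} = \frac{1}{t}\int_0^t P^\delta_r[L^\delta f](x)\,dr .
\end{equation*}

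Next I would pass to the limit $t \downarrow 0$ in $L^2(d\mu)$. Note $L^\delta f \in C_0(\R_+)$ by \eqref{op}: away from $0$ it is $\tfrac12 f'' + \tfrac{\delta-1}{2x}f'$, and near $0$ the condition $f'(0)=0$ together with $f \in C^2$ gives $\tfrac{f'(x)}{x} \to f''(0)$, so $L^\delta f$ is continuous at $0$ with value $\delta f''(0)$ and compactly supported. By Lemma \ref{semipro}(3), $\sup_x |P^\delta_r[L^\delta f](x) - L^\delta f(x)| \to 0$ as $r \downarrow 0$; averaging over $r \in [0,t]$ preserves this, so $\frac1t\int_0^t P^\delta_r[L^\delta f]\,dr \to L^\delta f$ uniformly on $\R_+$. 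To convert uniform convergence into $L^2(d\mu)$-convergence I need a common compact support: by Lemma \ref{semipro}(1) each $P^\delta_r[L^\delta f] \in C_0(\R_+)$, but the support need not stay in $[0,R]$. However $\mu$ is $\sigma$-finite but not finite, so uniform convergence alone does not give $L^2(d\mu)$-convergence; I would instead estimate $\|P^\delta_r[L^\delta f]\|_{L^2(d\mu)}$ using the contraction property (Proposition \ref{c1}): $\|\frac1t\int_0^t P^\delta_r[L^\delta f]\,dr - P^\delta_{r_0}[L^\delta f]\|$ type bounds, or more directly write
\begin{equation*}
\Big\| \frac{P^\delta_t[f]-f}{t} - L^\delta f \Big\|_{L^2(d\mu)} = \Big\| \frac{1}{t}\int_0^t \big(P^\delta_r[L^\delta f] - L^\delta f\big)\,dr \Big\|_{L^2(d\mu)} \le \frac{1}{t}\int_0^t \big\| P^\delta_r[L^\delta f] - L^\delta f \big\|_{L^2(d\mu)}\,dr ,
\end{equation*}
and show $r \mapsto \|P^\delta_r[L^\delta f] - L^\delta f\|_{L^2(d\mu)} \to 0$ as $r \downarrow 0$. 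Since $L^\delta f \in \shb_b(\R_+) \cap L^2(d\mu)$, this is exactly the strong continuity at $0$ of the contraction semigroup $(P^\delta_r)$ on $L^2(d\mu)$ at the vector $L^\delta f$; if this has not been established yet, I would derive it from Lemma \ref{semipro}(3) combined with a uniform-integrability / domination bound, e.g. using that $P^\delta_r[(L^\delta f)^2] \to (L^\delta f)^2$ and $\int P^\delta_r[(L^\delta f)^2]\,d\mu = \int (L^\delta f)^2\,d\mu$ by invariance (Proposition \ref{c1Bis}), which forces $L^2(d\mu)$-convergence of $P^\delta_r[L^\delta f]$ to $L^\delta f$ by the standard "convergence a.e. plus convergence of norms implies $L^2$-convergence" argument (Scheffé). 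Averaging over $r$ then finishes the limit, proving both differentiability at $t=0$ and the identification of the derivative as $L^\delta f$.

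The main obstacle I anticipate is precisely this last passage: controlling $P^\delta_r[L^\delta f]$ in $L^2(d\mu)$ rather than merely uniformly, because $\mu(\R_+) = \infty$ means pointwise/uniform smallness of the integrand does not bootstrap to $L^2(d\mu)$-smallness without a domination or mass-conservation input. The invariance of $\mu$ (Proposition \ref{c1Bis}) and the symmetry/contraction properties (Propositions \ref{c1}, \ref{wellP}) are the tools that resolve this, and the cleanest route is the Scheffé-type argument just sketched: a.e.\ convergence from Lemma \ref{semipro} plus convergence of the $L^2(d\mu)$-norms from invariance applied to $(L^\delta f)^2 \in L^1(d\mu)$. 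A minor point to check carefully is that $(L^\delta f)^2 \in L^1(d\mu)$ and $L^\delta f \in L^2(d\mu)$, which hold because $L^\delta f$ is bounded with compact support and $\mu$ is a Radon measure.
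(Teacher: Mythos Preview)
Your proof is correct and shares the paper's overall architecture: use the strong martingale problem to obtain $\frac{P^\delta_t[f]-f}{t} = \frac{1}{t}\int_0^t P^\delta_r[L^\delta f]\,dr$, then reduce the question to $L^2(d\mu)$-strong continuity of $(P^\delta_r)$ at $r=0$ evaluated at $g := L^\delta f \in C_0(\R_+) \cap L^2(d\mu)$. The only difference is in that last step. The paper argues via the interpolation $\|P^\delta_r g - g\|_{L^2(d\mu)}^2 \le \|P^\delta_r g - g\|_\infty \,\|P^\delta_r g - g\|_{L^1(d\mu)}$: the $L^\infty$ factor tends to zero by Lemma \ref{semipro}(3), while for the $L^1$ factor one reduces to $g \ge 0$, notes $(P^\delta_r g - g)^- \le g \in L^1(d\mu)$, and uses invariance of $\mu$ (Proposition \ref{c1Bis}) to write $\|P^\delta_r g - g\|_{L^1(d\mu)} = 2\int (P^\delta_r g - g)^-\,d\mu$, which tends to zero by dominated convergence. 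Your route via ``a.e.\ convergence plus convergence of norms'' also works, with one small caveat in the exposition: invariance gives $\int P^\delta_r[g^2]\,d\mu = \int g^2\,d\mu$, which is not literally $\|P^\delta_r g\|_{L^2(d\mu)} \to \|g\|_{L^2(d\mu)}$; to close that gap you should invoke the contraction property (Proposition \ref{c1}) for $\limsup \le \|g\|_{L^2(d\mu)}$ and Fatou on the a.e.\ limit for $\liminf \ge \|g\|_{L^2(d\mu)}$, after which the Riesz--Scheff\'e argument applies as you intend.
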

\begin{proof}
  Let $f \in C_0(\R_+) \cap L^2(d\mu),$ in particular, this is the case if  $f \in D$. In particular $f$ also belongs to $L^1(d\mu)$.
  \begin{enumerate}
    
  \item We first prove that $t \mapsto P^{\delta}_t[f]$ is continuous at $t= 0$
   with values in $L^2(d\mu)$.
   
   Taking into account
    Lemma \ref{semipro} (3), it is enough to prove
    \begin{equation} \label{L1norm}
      \lim_{t \rightarrow 0} \Vert P^\delta_t[f] - f \Vert_{L^1(d\mu)} = 0.
    \end{equation}
    Without loss of generality, we may suppose $f \ge 0$. Then $P^{\delta}_t[f]\ge 0$ for all $t$, which yields
    \begin{equation}\label{ngtvpart}
    	(P^{\delta}_t[f] - f)^- \leq f.
    \end{equation}
    Indeed, setting $g = P^{\delta}[f]-f$, we clearly get
    \begin{equation*}
      g^- = \max(-g,0) = \max(f-P^\delta_t[f], 0) \le \max(f,0) = f.
     \end{equation*}
    By the invariance of $\mu$ (see Proposition \ref{c1Bis}) and \eqref{ngtvpart}, we have
    \begin{equation}\label{leblim}
    	\int_{\R_+}\lvert P^{\delta}_t[f] - f\rvert d\mu = \int_{\R_+}(P_t^{\delta}[f] - f)d\mu + 2\int_{\R_+}(P^{\delta}_t[f]-f)^- d\mu = 2\int_{\R_+}(P_t^{\delta}[f]-f)^- d\mu.
    \end{equation}
By Lemma \ref{semipro} item (3), we have $\displaystyle\lim_{t\to 0}P^{\delta}_t[f](x) - f(x) = 0$. Hence, by \eqref{leblim}, we conclude the proof using Lebesgue dominated convergence theorem.
    
    

   \item In fact, $t \mapsto P^{\delta}_t[f]$ is continuous also
     for $t = t_0 > 0.$
     Indeed, we recall that by Lemma \ref{semipro} (1),  $P^{\delta}_{t_0}[f] \in C_0(\R_+)$.

 At this point we apply  Lemma \ref{semipro} (2) and
 the continuity at $t= 0$  related to the function
 $P_{t_0} f$,
  which belongs to $C_0(\R_+) $,
 see item (1).
 \item
   Now we prove differentiability at $t=0$. We recall that, for every $x \ge 0$, $Y = X^{0,x}$ fulfills \eqref{lmpBis}
   with $s = 0$.
   Taking the expectation in \eqref{lmpBis} and  dividing  by $t$,
   yields
   $$ \frac{P^\delta_t [f](x) - f(x)}{t} = \dfrac{1}{t}\int_0^t P_r^\delta [L^\delta f](x) dr, \ t \ge 0.$$
  So,
  $$\left\rVert\dfrac{P^{\delta}_t[f] - f}{t} - L^{\delta}f\right\lVert_{L^2(d\mu)} \le \dfrac{1}{t}\int_{0}^{t}||P^{\delta}_s[L^{\delta}f] - L^{\delta}f||_{L^2(d\mu)}ds.$$
  Since $t\mapsto P^{\delta}_t[g]$ is continuous,
with $g = L^{\delta}f$,
  we apply the mean value theorem
 for Bochner integrals
 and take the limit when $t\downarrow 0$ and the proof is concluded.

\end{enumerate}

\end{proof}
\begin{remark}\label{r2}
  As a direct consequence of Proposition \ref{timecont}, 
  the generator of the semigroup $(P^{\delta}_t)$ restricted to $D$ coincides with $L^{\delta}$.
      \end{remark}

\begin{proposition}\label{p2} The following relation holds 
	$$\langle f,L^{\delta}g \rangle_{L^2(d\mu)} = \langle g,L^{\delta}f \rangle_{L^2(d\mu)} = -\dfrac{1}{2}\langle g',f'\rangle_{L^2(d\mu)},$$
for $f,g \in D$. In particular, $- L^{\delta}$ is a symmetric non-negative
        definite operator on $D$.
\end{proposition}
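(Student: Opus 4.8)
The plan is to establish the formula by a direct integration-by-parts computation on $(0,\infty)$, using the measure $\mu(dx) = x^{\delta-1}dx$ and the compact-support property of elements of $D$, and then to read off symmetry and non-negativity as immediate consequences. First I would invoke Remark \ref{r1} (2): for $f,g \in D$ and $x > 0$ we have $L^\delta f(x) = \tfrac12 x^{1-\delta}(x^{\delta-1}f')'(x)$. Hence
\[
\langle g, L^\delta f\rangle_{L^2(d\mu)} = \int_0^\infty g(x)\,\frac{x^{1-\delta}}{2}\bigl(x^{\delta-1}f'\bigr)'(x)\, x^{\delta-1}\,dx = \frac12\int_0^\infty g(x)\bigl(x^{\delta-1}f'(x)\bigr)'\,dx.
\]
Now I would integrate by parts on $(0,\infty)$. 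The boundary term at $+\infty$ vanishes because $g$ has compact support; the boundary term at $0$ is $-\lim_{x\downarrow 0} g(x)x^{\delta-1}f'(x)$, which vanishes because $f \in D$ satisfies $f'(0) = 0$, so $f'(x) = O(x)$ near $0$ (as $f \in C^2$), giving $x^{\delta-1}f'(x) = O(x^\delta) \to 0$ since $\delta > 0$. This leaves
\[
\langle g, L^\delta f\rangle_{L^2(d\mu)} = -\frac12\int_0^\infty g'(x)\,x^{\delta-1}f'(x)\,dx = -\frac12\langle g', f'\rangle_{L^2(d\mu)}.
\]

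The right-hand side $-\tfrac12\langle g',f'\rangle_{L^2(d\mu)}$ is manifestly symmetric in $f$ and $g$, which immediately yields $\langle g, L^\delta f\rangle_{L^2(d\mu)} = \langle f, L^\delta g\rangle_{L^2(d\mu)}$, i.e. the first claimed equality, by applying the computation with the roles of $f$ and $g$ exchanged. Taking $g = f$ gives $\langle f, L^\delta f\rangle_{L^2(d\mu)} = -\tfrac12\|f'\|_{L^2(d\mu)}^2 \le 0$, so $-L^\delta$ is symmetric and non-negative definite on $D$, which is the final assertion.

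The one point requiring genuine care — the main (though modest) obstacle — is the justification that the boundary term at $x = 0$ vanishes and that all the integrals involved are absolutely convergent, since the integrand $g(x)(x^{\delta-1}f')'(x)$ a priori involves the potentially singular factor $x^{\delta-2}$ coming from differentiating $x^{\delta-1}$. I would handle this by first performing the integration by parts on $[\varepsilon, \infty)$ (where everything is smooth and the manipulations are valid), obtaining the boundary term $-g(\varepsilon)\varepsilon^{\delta-1}f'(\varepsilon)$, and then letting $\varepsilon \downarrow 0$: using $f'(\varepsilon) = f'(\varepsilon) - f'(0) = O(\varepsilon)$ and $\delta > 0$ shows this term tends to $0$, while the integral $\int_\varepsilon^\infty g'(x)x^{\delta-1}f'(x)\,dx$ converges to $-\tfrac12\langle g',f'\rangle_{L^2(d\mu)}$ by dominated convergence (the integrand is bounded by $C x^{\delta-1}\mathds{1}_{[0,R]}$ for suitable $C,R$, which is $\mu$-integrable near $0$ since $\delta > 0$). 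This also retroactively shows $\langle g, L^\delta f\rangle_{L^2(d\mu)}$ is a well-defined finite quantity. I would also remark that $D \subset L^2(d\mu)$ and $L^\delta f \in C(\R_+)$ with compact support lies in $L^2(d\mu)$, so all inner products in the statement make sense.
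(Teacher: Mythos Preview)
Your proof is correct and follows essentially the same approach as the paper: rewrite $L^\delta f$ via Remark~\ref{r1}(2) as $\tfrac12 x^{1-\delta}(x^{\delta-1}f')'$, absorb the weight into the measure, integrate by parts, and read off symmetry and non-negativity. The paper's proof is a terse three-line version of yours; your additional care with the boundary term at $0$ (using $f'(0)=0$ and $f\in C^2$ to get $x^{\delta-1}f'(x)=O(x^\delta)\to 0$) and with absolute convergence fills in details the paper leaves implicit.
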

\begin{proof}
By using Remark \ref{r1}, we may apply integration by parts to get 

$$
\langle g,L^{\delta}f\rangle_{L^2(d\mu)} =  \dfrac{1}{2}\langle g,(id^{\delta-1}f')'\rangle = -\dfrac{1}{2}\langle (id^{\delta -1}f'),g'\rangle = -\dfrac{1}{2}\langle f',g'\rangle_{L^2(d\mu)},$$ 
for $f,g \in D$. By exchanging the role of $f$ and $g$, this implies that $L^\delta $ is symmetric. In particular, taking $f = g$, $\langle f,L^{\delta}f\rangle_{L^2(d\mu)} = -\dfrac{1}{2} ||f'||^2_{L^2(d\mu)}$ which means that $L^{\delta}$ is non-positive on $D$.
\end{proof}

	
	
	

\subsection{The dynamics evolution space }

\label{S24}
$ $

We define $\shh$ to be the subspace of absolutely continuous
functions $f\in L^2(d\mu)$ such that there exists a function
$g\in L^2(d\mu)$ such that 

$$f(x) - f(y) = \int_{y}^{x}g(z)dz,$$
for all $x,y \ge 0$. Obviously, if $f \in \shh$ then 
 $f' = g,$ where $f'$ is intended in the sense of distributions. We equip $\mathcal{H}$ with the following norm 
\begin{equation}\label{Hnorm2} 
	\Vert f \Vert^2_{\shh} : = \Vert f\Vert^2_{L^2(d\mu)} +
	\frac{1}{2} \Vert g \Vert^2_{L^2(d\mu)}.
\end{equation}

The proof of the following Proposition is given in the Appendix \ref{apB}.

\begin{proposition}\label{eqH}
  \
	\begin{enumerate}
	\item $D$ is dense in $\shh$.
           \item $D$ is dense in $L^2(d\mu)$.
         \end{enumerate}
\end{proposition}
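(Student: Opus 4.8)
The plan is to treat the two assertions separately, with (2) nearly immediate and (1) carrying the real content. Throughout I use that $D\subseteq\shh$: any $\varphi\in D$ is $C^2$ with compact support, hence absolutely continuous with $\varphi'\in C^1_0(\R_+)\subseteq L^2(d\mu)$ (the last inclusion since $\mu$ is finite on compact subsets of $\R_+$), so both statements are meaningful. For (2), I would note that every $\psi\in C^\infty_0\big((0,+\infty)\big)$, extended by $0$ at the origin, lies in $D$: its support is a compact subset of $(0,+\infty)$, so $\psi$ vanishes identically near $0$ and in particular $\psi'(0)=0$. Since the density $x^{\delta-1}$ of $\mu$ is strictly positive and locally integrable on $(0,+\infty)$, the space $C^\infty_0\big((0,+\infty)\big)$ is dense in $L^2(d\mu)$ by the usual truncation-and-mollification argument; hence so is the larger set $D$. (Alternatively, (2) follows from (1), since $\|\cdot\|_{L^2(d\mu)}\le\|\cdot\|_{\shh}$ and $\shh\supseteq C^\infty_0\big((0,+\infty)\big)$ is dense in $L^2(d\mu)$.)

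For (1), given $f\in\shh$ and $\alpha>0$ I would produce $\varphi\in D$ with $\|f-\varphi\|_{\shh}<\alpha$ by a three-step, three-$\varepsilon$ construction. \emph{Step 1 (cut-off at infinity):} with $(\chi_n)\subseteq C^\infty(\R_+)$ satisfying $0\le\chi_n\le1$, $\chi_n\equiv1$ on $[0,n]$, $\chi_n\equiv0$ on $[2n,+\infty)$ and $\|\chi_n'\|_\infty\le C/n$, one checks $\chi_nf\in\shh$ and $\chi_nf\to f$ in $\shh$: the contributions $\chi_nf\to f$ and $\chi_nf'\to f'$ in $L^2(d\mu)$ follow by dominated convergence, while $\|\chi_n'f\|_{L^2(d\mu)}^2\le(C^2/n^2)\|f\|_{L^2(d\mu)}^2\to0$. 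So we may assume $f$ is supported in $[0,R]$. \emph{Step 2 (flatten near the origin):} for $\varepsilon\in(0,R)$ let $f_\varepsilon$ equal the constant $f(\varepsilon)$ on $[0,\varepsilon]$ and equal $f$ on $(\varepsilon,+\infty)$; then $f_\varepsilon\in\shh$ and, since $f-f_\varepsilon$ is supported in $[0,\varepsilon]$,
$$\|f-f_\varepsilon\|_{\shh}^2=\int_0^\varepsilon\big(f(x)-f(\varepsilon)\big)^2x^{\delta-1}\,dx+\tfrac12\int_0^\varepsilon f'(x)^2x^{\delta-1}\,dx .$$
The second term is the tail of the convergent integral $\int_0^R f'^2\,d\mu$, hence $o(1)$; the first is $\le c_\varepsilon^2\,\varepsilon^\delta/\delta$ with $c_\varepsilon:=\int_0^\varepsilon|f'|\,dt\le\big(\int_0^\varepsilon f'^2t^{\delta-1}dt\big)^{1/2}\big(\varepsilon^{2-\delta}/(2-\delta)\big)^{1/2}\to0$, so it is $o(1)$ too. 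Thus $f_\varepsilon\to f$ in $\shh$ as $\varepsilon\downarrow0$.

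\emph{Step 3 (mollify):} with $\varepsilon$ fixed, extend $f_\varepsilon$ to $\R$ by the constant $f(\varepsilon)$ on $(-\infty,0)$ — this extension is absolutely continuous on $\R$, constant on $(-\infty,\varepsilon]$, supported in $(-\infty,R]$, with $f_\varepsilon'\in L^2(\R,dx)$ because it vanishes near $0$ and $x^{\delta-1}$ is bounded below on $[\varepsilon,R]$ — and set $\varphi_\eta:=(f_\varepsilon*\rho_\eta)\big|_{\R_+}$ for a standard mollifier $\rho_\eta$. For $\eta<\varepsilon$, $\varphi_\eta$ is $C^\infty$ on $\R_+$, equals $f(\varepsilon)$ on $[0,\varepsilon-\eta]$ (so $\varphi_\eta'(0)=0$) and is supported in $[0,R+\eta]$; hence $\varphi_\eta\in D$. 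Since $\varphi_\eta-f_\varepsilon$ and $\varphi_\eta'-f_\varepsilon'$ are supported in $[\varepsilon/2,R+1]$ for small $\eta$, where the weight is bounded by $(\varepsilon/2)^{\delta-1}$, the standard $L^2_{\mathrm{loc}}(\R,dx)$ convergences $f_\varepsilon*\rho_\eta\to f_\varepsilon$ and $f_\varepsilon'*\rho_\eta\to f_\varepsilon'$ give $\|\varphi_\eta-f_\varepsilon\|_{\shh}\to0$. Choosing $n$, then $\varepsilon$, then $\eta$ so that each of the three errors is $<\alpha/3$ completes the proof.

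The step I expect to be the crux is Step 2. Elements of $\shh$ need not be classically differentiable at $0$ — e.g. $f(x)=x^{1-\beta}$ near $0$ with $0<\beta<\delta/2$ belongs to $\shh$ but has $f'$ unbounded at $0$ — so one cannot simply mollify and restrict, and a plain mollification would not respect the constraint $\varphi'(0)=0$ either. Replacing $f$ by a constant on $[0,\varepsilon]$ disposes of both difficulties at once, and the resulting error estimates close precisely because $0<\delta<1$ keeps $\mu$ locally finite near the origin with only polynomial blow-up, so that $\int_0^\varepsilon x^{\delta-1}dx$ and $\int_0^\varepsilon x^{1-\delta}dx$ are both $o(1)$.
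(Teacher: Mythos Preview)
Your proof is correct and shares the paper's three-step scaffold (truncate at infinity, treat the origin, mollify), but your Step~2 takes a different and in fact more careful route. The paper multiplies $f$ by a smooth cutoff $\psi_n$ with $\psi_n\equiv0$ on $[0,1/n]$, $\psi_n\equiv1$ on $[2/n,\infty)$, so that the approximants vanish near the origin; you instead replace $f$ by the constant $f(\varepsilon)$ on $[0,\varepsilon]$. Your choice is more robust: the paper's cutoff introduces a derivative term $f\psi_n'$ with $\|\psi_n'\|_\infty\sim n$, and when $f(0)\neq0$ one gets
\[
\|f\psi_n'\|_{L^2(d\mu)}^2\;\gtrsim\; n^{2}\int_{1/n}^{2/n}x^{\delta-1}\,dx\;\sim\; n^{2-\delta},
\]
which does not tend to zero, so the paper's assertion that ``clearly $g_n\to g$ in $L^2(d\mu)$'' needs more work in that case. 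Your flattening sidesteps this entirely: the derivative error is exactly the tail $\int_0^\varepsilon(f')^2\,d\mu$, and the value error is controlled by $c_\varepsilon=\int_0^\varepsilon|f'|\,dt$, which your Cauchy--Schwarz bound shows is $o(1)$. As a bonus, since the extended $f_\varepsilon$ is constant on $(-\infty,\varepsilon]$, the mollified $\varphi_\eta$ is automatically constant near $0$ for $\eta<\varepsilon$, giving $\varphi_\eta'(0)=0$ for free. Your separate treatment of item~(2) via $C^\infty_0((0,\infty))\subset D$ is also fine; the paper instead handles (1) and (2) in parallel with the same sequence of reductions.
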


\begin{proposition}\label{stabH}
  For fixed $t > 0,$ $P^{\delta}_t[f]\in \shh$ for all $f\in L^2(d\mu)$
  and $P^{\delta}_t$ maps $L^2(d\mu)$ into $\shh$ continuously.

\end{proposition}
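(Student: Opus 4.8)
The plan is to show the two assertions—that $P_t^\delta$ maps $L^2(d\mu)$ into $\shh$ for $t>0$, and that this map is continuous—by obtaining a quantitative bound of the form $\|P_t^\delta[f]\|_{\shh} \le C(t) \|f\|_{L^2(d\mu)}$ for a finite constant $C(t)$ depending on $t$ (necessarily blowing up as $t\downarrow 0$). Since $\|P_t^\delta[f]\|_{L^2(d\mu)}\le \|f\|_{L^2(d\mu)}$ already holds by Proposition \ref{c1}, everything reduces to controlling the distributional derivative $(P_t^\delta[f])'$ in $L^2(d\mu)$, i.e.\ to showing $\partial_x P_t^\delta[f]\in L^2(d\mu)$ with norm $\le C'(t)\|f\|_{L^2(d\mu)}$.

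First I would use the explicit heat kernel: for $t>0$, $P_t^\delta[f](x)=\int_{\R_+} p_t^\delta(x,y) f(y)\,dy$ (valid $d\mu$-a.e.\ by \eqref{ExtEquality}). Differentiating under the integral sign—justified because $x\mapsto p_t^\delta(x,y)$ is $C^1$ on $(0,\infty)$ by Remark \ref{RBd}—gives $\partial_x P_t^\delta[f](x)=\int_{\R_+}\partial_x p_t^\delta(x,y) f(y)\,dy$, and by \eqref{dxBd} this equals $\frac{1}{2t}\bigl(P_t^{\delta+2}[f](x)-P_t^\delta[f](x)\bigr)$, where $P_t^{\delta+2}$ is the Bessel semigroup in dimension $\delta+2$. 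The key symmetry/absolute-continuity structure then needs to be checked: one must verify that the function $x\mapsto \int_{\R_+}p_t^\delta(x,y)f(y)\,dy$ is genuinely absolutely continuous with this derivative (not merely differentiable a.e.), which follows from writing it as an indefinite integral of the continuous-in-$x$ function $\partial_x p_t^\delta(\cdot,y)$ against $f(y)\,dy$ and applying Fubini, using integrability estimates on $p_t^{\delta+2}$ and $p_t^\delta$ near $x=0$ and $x=\infty$.

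Next I would estimate $\|\partial_x P_t^\delta[f]\|_{L^2(d\mu)}\le \frac{1}{2t}\bigl(\|P_t^{\delta+2}[f]\|_{L^2(d\mu)}+\|P_t^\delta[f]\|_{L^2(d\mu)}\bigr)$. The second term is $\le \frac{1}{2t}\|f\|_{L^2(d\mu)}$ by Proposition \ref{c1}. For the first term the issue is that $P_t^{\delta+2}$ is naturally a contraction on $L^2(d\mu_{\delta+2})$ with $\mu_{\delta+2}(dx)=x^{\delta+1}\mathds 1_{\{x>0\}}dx$, not on $L^2(d\mu)=L^2(d\mu_\delta)$. So I would need a comparison: using Remark \ref{r1.0}-type identities relating $p_t^{\delta+2}(x,y)$ and $p_t^{\delta}(x,y)$ (note $p_t^{\delta+2}(x,y)=\frac{y^2}{x^2}\cdot(\text{ratio of exponentials/Bessel factors})\cdot p_t^\delta(x,y)$ is not quite clean, so more carefully one uses $p^{\delta+2}_t(x,y) = \frac{t}{xy}\,\cdot$ something times $I_{\nu+1}/I_\nu$), or alternatively one estimates $\int_{\R_+}\bigl(\int_{\R_+}p_t^{\delta+2}(x,y)f(y)dy\bigr)^2 x^{\delta-1}dx$ directly by Jensen in the form $(\int p^{\delta+2}_t(x,y)f(y)dy)^2 \le \int p^{\delta+2}_t(x,y) f^2(y)dy$ and then computing $\int_{\R_+} p_t^{\delta+2}(x,y) x^{\delta-1}dx$ as a function of $y$ via the $\delta\mapsto\delta+2$ analogue of Remark \ref{r1.0}, getting a bound $\le c\,(1+t^{-1})\,y^{1-\delta}$ or similar, which after plugging back in yields $\|P_t^{\delta+2}[f]\|_{L^2(d\mu)}\le C(t)\|f\|_{L^2(d\mu)}$.

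Combining, $\|P_t^\delta[f]\|_{\shh}^2 = \|P_t^\delta[f]\|_{L^2(d\mu)}^2 + \tfrac12\|\partial_x P_t^\delta[f]\|_{L^2(d\mu)}^2 \le \bigl(1 + C(t)\bigr)\|f\|_{L^2(d\mu)}^2$, which establishes both that $P_t^\delta[f]\in\shh$ and that $P_t^\delta\colon L^2(d\mu)\to\shh$ is bounded, hence continuous, since it is linear. The main obstacle I anticipate is the bookkeeping around the $\delta+2$ semigroup—specifically identifying the correct weighted-integral identity $\int_{\R_+} p_t^{\delta+2}(x,y)\,x^{\delta-1}\,dx$ (the analogue of invariance/symmetry but across dimensions) and making sure the resulting $t$-dependent constant is finite for each fixed $t>0$; a secondary technical point is rigorously justifying differentiation under the integral and absolute continuity uniformly enough to conclude $(P_t^\delta[f])'$ is the distributional derivative, which I would handle by a dominated-convergence/Fubini argument exploiting that the singularities of $p_t^\delta$ and $p_t^{\delta+2}$ at $x=0$ are integrable against $\mu$ for fixed $t$.
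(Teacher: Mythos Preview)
Your approach is essentially the paper's: write $(P_t^\delta[f])' = \tfrac{1}{2t}\bigl(P_t^{\delta+2}[f]-P_t^\delta[f]\bigr)$ via Remark \ref{RBd}, justify absolute continuity by pairing against test functions and integrating by parts, and then bound the two pieces in $L^2(d\mu)$. The only difference is that the paper asserts the $(\delta+2)$ bound in one line (reaching $\|(P_t^\delta[f])'\|_{L^2(d\mu)}^2\le t^{-2}\|f\|_{L^2(d\mu)}^2$), whereas you rightly flag that $\mu$ is not the invariant measure for $(P_t^{\delta+2})$ and propose to bound $\int_{\R_+} p_t^{\delta+2}(x,y)\,x^{\delta-1}\,dx$ directly; this is sound, and any finite $t$-dependent constant suffices for the proposition.
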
	
\begin{proof}
Let $f \in L^2(d\mu)$.
By Proposition \ref{c1}, we first observe
  \begin{equation} \label{Pcont}
\Vert P_t^\delta [f] \Vert^2_{L^2(d\mu)} \le \Vert f \Vert^2_{L^2(d\mu)}.
\end{equation}
  We prove now that $P_t^\delta [f] \in \shh$.
   For this purpose, let $\varphi:\R_+ \rightarrow \R$ be a smooth function with compact
   support. By Remark \ref{RBd},
   for each $y > 0$, $x \mapsto p^{\delta}_t(x,y)$ is
         absolutely continuous. Then, by using  Proposition \ref{c1}, Fubini's theorem 
         and integration by parts, one can write
  \begin{eqnarray}
           \label{EAC}      
    - \int_{\R_+} P^{\delta}_t[f](x) \varphi'(x) dx
 &=&
          - \int_{\R_+} \left( \int_{\R_+} p^{\delta}_t(x,y)  f(y) dy\right)  \varphi'(x) dx   \\
    &=&
          - \int_{\R_+} \left( \int_{\R_+} p^{\delta}_t(x,y) \varphi'(x)
           dx\right)  f(y) dy  \nonumber  \\ &=& 
 \varphi(0)  \int_{\R_+} p^{\delta}_t(0,y) 
             f(y) dy +
    \int_{\R_+}\left( \int_{\R_+} \partial_x p^{\delta}_t(x,y) \varphi(x)
                                      dx\right) f(y) dy \nonumber \\
    &=& \varphi(0)  \int_{\R_+} p^{\delta}_t(0,y) 
             f(y) dy +
    \int_{\R_+}\left( \int_{\R_+} \partial_x p^{\delta}_t(x,y) f(y)
                                      dy\right) \varphi(x) dx. \nonumber 
  \end{eqnarray}
  We remark that for every function $f \in L^2(d\mu)$, we have $x \mapsto \int_{\R_+} \partial_x p^{\delta}_t(x,y) f(y) dy \in L^2(d\mu)$
  by \eqref{dxBd}. This proves that
            $  x \mapsto P^{\delta}_t[f](x)$ is absolutely continuous and
\begin{equation} \label{EAC1}
            (P^{\delta}_t[f])'(x) = \int_{\R_+}f(y)\partial_xp^{\delta}_t(x,y)dy.
  \end{equation}
We now check that 
 $(P^{\delta}_t[f])' \in L^2(d\mu)$.
		By \eqref{EAC1} and again \eqref{dxBd}, we have 
	\begin{eqnarray} \label{Est2}
\nonumber    \Vert (P_t [f])' \Vert^2_{L^2(d\mu)} &=&
    \int_{\R_+}\left(\int_{\R_+}f(y)\partial_xp^{\delta}_t(x,y)dy\right)^2\mu(dx) \\
    &=& \dfrac{1}{4t^2}\int_{\R_+}(P^{\delta + 2}_t[f](x) - P^{\delta}_t[f](x))^2\mu(dx) \nonumber \\
    &\le &
    \dfrac{1}{2t^2}\left(\int_{\R_+}(P^{\delta+2}_t[f](x))^2\mu(dx) + \int_{\R_+}(P^{\delta}_t[f](x))^2\mu(dx)\right) \\
    &\le& \dfrac{1}{t^2}||f||^2_{L^2(d\mu)}. \nonumber
	\end{eqnarray}
	Therefore, from \eqref{Pcont} and
   \eqref{Est2}, $P^{\delta}_t[f] \in \shh$ and
  \begin{equation} \label{EstNorm}
 \Vert P^\delta_t [f] \Vert^2_\shh \le \left(1 + \dfrac{1}{2t^2}\right) \Vert f \Vert^2_{L^2(d\mu)}.
\end{equation}
   This shows that $P^{\delta}_t$ maps continuously $L^2(d\mu)$ to $\shh$. This concludes the proof.
 
            \end{proof}

\section{Friedrichs extension in the Bessel case}

\label{Fext}

In this section, we adapt and apply the concepts and results presented in Appendix \ref{apA}.

\begin{proposition}\label{p5}
  Let $L^\delta: D \subset L^2(d\mu) \rightarrow L^2(d\mu)$ be the operator defined  in \eqref{op}. Then,
  $-L^{\delta}$ admits the Friedrichs extension (see Definition \ref{Fried})
which we denote by
  $ -L^{\delta}_F$.
\end{proposition}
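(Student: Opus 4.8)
The plan is to verify directly that the operator $-L^\delta$ on the Hilbert space $H = L^2(d\mu)$, with domain $D$, satisfies the hypotheses required for the abstract construction of the Friedrichs extension recalled in Appendix \ref{apA} (Definition \ref{Fried} and the results preceding it), and then simply invoke that construction. Concretely, the three things to check are: $D$ is dense in $H$; the operator $-L^\delta : D \subset H \to H$ is symmetric; and $-L^\delta$ is non-negative on $D$.

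All three are already available. Density of $D$ in $L^2(d\mu)$ is exactly Proposition \ref{eqH}(2), so $-L^\delta$ is a densely defined linear operator on $H$. Symmetry and non-negativity both follow from Proposition \ref{p2}: for all $f,g \in D$ one has $\langle -L^\delta f, g\rangle_{L^2(d\mu)} = \langle f, -L^\delta g\rangle_{L^2(d\mu)}$, and taking $f=g$ gives $\langle -L^\delta f, f\rangle_{L^2(d\mu)} = \tfrac12 \|f'\|^2_{L^2(d\mu)} \ge 0$. Thus $-L^\delta$ is a densely defined, symmetric, non-negative operator, which is precisely the input of the Friedrichs machinery.

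It then remains to run the standard construction: the sesquilinear form $a(f,g) := \langle -L^\delta f, g\rangle_{L^2(d\mu)} + \langle f,g\rangle_{L^2(d\mu)}$ on $D\times D$ is an inner product whose associated norm dominates $\|\cdot\|_{L^2(d\mu)}$; the completion of $D$ for this norm embeds continuously into $H$ and serves as the form domain (which, by Proposition \ref{eqH}(1) and Lemma \ref{l1}, will turn out to be $\shh$), and the Friedrichs extension $-L^\delta_F$ is obtained by restricting the adjoint $(-L^\delta)^*$ to the intersection of its domain with this form domain; its self-adjointness and the fact that it extends $-L^\delta$ are guaranteed by the abstract result in Appendix \ref{apA}. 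There is no genuine obstacle here, as the argument is purely a matter of matching hypotheses; the only point worth flagging is that $-L^\delta$ is merely non-negative and not bounded below by a positive multiple of the identity — the measure $\mu$ has infinite mass and there is no Poincaré-type inequality — so the shift by $\langle f,g\rangle_{L^2(d\mu)}$ in the definition of $a$ is essential, but this is exactly what the construction in Appendix \ref{apA} is designed to handle.
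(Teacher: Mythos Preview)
Your proposal is correct and follows essentially the same approach as the paper: verify density of $D$ via Proposition \ref{eqH}(2), symmetry and non-negativity via Proposition \ref{p2}, and then invoke the abstract Friedrichs result (Proposition \ref{A.9}). The paper's proof is simply the two-line version of what you wrote; your additional remarks about the form domain coinciding with $\shh$ anticipate Lemma \ref{l1} but are not needed for the present statement.
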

\begin{proof}
  By Proposition \ref{p2} and Proposition \ref{eqH}, $-L^{\delta}$ is non-negative, symmetric and densely defined on $L^2(d\mu)$. Then, by applying Proposition \ref{A.9} (with $T = -L^{\delta}$  and
  $dom(T) = D$), we conclude it admits the Friedrichs extension $-L^\delta_F$.
\end{proof}
By definition, $-L^\delta_F$ is a non-negative self-adjoint operator. Moreover, we observe that the map $\sqrt{-L^\delta_F}:  dom(\sqrt{-L^\delta_F}) \subset
L^2(d\mu) \rightarrow L^2(d\mu)$
is well-defined in the sense of spectral analysis, see considerations before Proposition
\ref{the1} in Appendix \ref{apA}.

The proof of the following lemma is postponed to Appendix \ref{apB}.
\begin{lemma}\label{l1}
  $dom(\sqrt{-L^{\delta}_F}) = \shh$.
  Moreover $D(\epsilon) = dom(\sqrt{-L^{\delta}_F})$, where $\epsilon$ is the
closed  symmetric form associated with
$-L^{\delta}_F$
  (in the sense of Proposition \ref{the1}).

\end{lemma}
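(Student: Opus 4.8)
The plan is to identify the symmetric form $\epsilon$ associated with $-L^\delta_F$ explicitly and then show its domain coincides with $\shh$. By the construction of the Friedrichs extension (Definition \ref{Fried} and Proposition \ref{A.9} in Appendix \ref{apA}), $\epsilon$ is the closure of the form $\epsilon_0(f,g) := \langle -L^\delta f, g\rangle_{L^2(d\mu)}$ defined on $D$. By Proposition \ref{p2}, for $f,g\in D$ we have $\epsilon_0(f,g) = \tfrac12\langle f',g'\rangle_{L^2(d\mu)}$, so the quadratic form is $\epsilon_0(f,f) = \tfrac12\Vert f'\Vert^2_{L^2(d\mu)}$ and the associated form norm $\Vert f\Vert_{\epsilon_0}^2 := \Vert f\Vert^2_{L^2(d\mu)} + \epsilon_0(f,f) = \Vert f\Vert^2_{L^2(d\mu)} + \tfrac12\Vert f'\Vert^2_{L^2(d\mu)}$ is precisely the norm $\Vert\cdot\Vert_\shh$ defined in \eqref{Hnorm2}. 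So the statement $D(\epsilon) = \shh$ amounts to saying that the closure of $(D,\Vert\cdot\Vert_{\shh})$ inside $L^2(d\mu)$ — with the limiting element carrying its distributional derivative in $L^2(d\mu)$ — is exactly $\shh$.

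First I would record that $\shh$ equipped with $\Vert\cdot\Vert_\shh$ is a Hilbert space (completeness is routine: an $\shh$-Cauchy sequence has $L^2(d\mu)$-limits for both $f_n$ and $f_n'$, and passing to the limit in $f_n(x)-f_n(y)=\int_y^x f_n'$ identifies the limit of the derivatives as the distributional derivative of the limit). Next, $D(\epsilon)$ is by definition the completion of $D$ with respect to $\Vert\cdot\Vert_{\epsilon_0}=\Vert\cdot\Vert_\shh$, realized concretely as a subspace of $L^2(d\mu)$. Since $D\subset\shh$ and $\shh$ is $\Vert\cdot\Vert_\shh$-complete, we immediately get $D(\epsilon)\subset\shh$. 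For the reverse inclusion $\shh\subset D(\epsilon)$, I would invoke Proposition \ref{eqH}(1), which states precisely that $D$ is dense in $\shh$ (for the norm $\Vert\cdot\Vert_\shh$); this shows every element of $\shh$ is a $\Vert\cdot\Vert_\shh$-limit of elements of $D$, hence lies in $D(\epsilon)$. This gives $D(\epsilon)=\shh$.

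It then remains to argue $dom(\sqrt{-L^\delta_F}) = D(\epsilon)$, which is a general fact about the Friedrichs extension: for a non-negative self-adjoint operator $A$, the domain of $\sqrt{A}$ equals the form domain $D(\epsilon_A)$ and $\epsilon_A(u,v)=\langle\sqrt A\,u,\sqrt A\,v\rangle$. This is exactly the content of the spectral-theoretic results quoted before Proposition \ref{the1} in Appendix \ref{apA} (or of the first representation theorem for closed forms), applied to $A = -L^\delta_F$ whose associated closed form is $\epsilon$. Combining, $dom(\sqrt{-L^\delta_F}) = D(\epsilon) = \shh$.

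\textbf{Main obstacle.} The only genuinely nontrivial ingredient is the density of $D$ in $\shh$, i.e. Proposition \ref{eqH}(1), which we are allowed to assume here (its proof is deferred to Appendix \ref{apB}); the delicate point there is approximating a general $f\in\shh$ by compactly supported $C^2$ functions with vanishing derivative at $0$, which requires care near the singular point $x=0$ of the measure $\mu(dx)=x^{\delta-1}dx$ and near $+\infty$ — one must truncate without blowing up $\Vert f'\Vert_{L^2(d\mu)}$, exploiting that $0<\delta<1$. Everything else is bookkeeping: checking that the form norm coming from Proposition \ref{p2} is literally $\Vert\cdot\Vert_\shh$, that $\shh$ is complete, and quoting the standard identification of $dom(\sqrt A)$ with the form domain from Appendix \ref{apA}.
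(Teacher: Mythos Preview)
Your proposal is correct and follows essentially the same route as the paper: both hinge on Proposition \ref{p2} (to identify the form norm with $\Vert\cdot\Vert_\shh$) and Proposition \ref{eqH}(1) (density of $D$ in $\shh$), together with the Appendix \ref{apA} characterization of $dom(\sqrt{-L^\delta_F})$. The only cosmetic difference is that the paper works directly with the sequence description of $dom(\sqrt{T_F})$ from Proposition \ref{A.9} to prove the two inclusions by hand, whereas you package the inclusion $D(\epsilon)\subset\shh$ via completeness of $\shh$ and then invoke Proposition \ref{the1}; the substance is identical.
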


\begin{proposition}\label{symeLF}
  For $f,g \in dom(-L^{\delta}_F)$,
  we have
\begin{equation} \label{symeLFStatement}
  \langle f,L^{\delta}_F g\rangle_{L^2(d\mu)} =  \langle L^{\delta}_F f,g\rangle_{L^2(d\mu)} = -\dfrac{1}{2}\langle f',
  g'\rangle_{L^2(d\mu)}.
  \end{equation}
\end{proposition}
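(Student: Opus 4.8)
The plan is to extend the symmetric identity of Proposition \ref{p2}, which is known on the core $D$, to all of $dom(-L^\delta_F)$, using that $D$ is a form core (Lemma \ref{l1}) and density arguments. First I would recall from Lemma \ref{l1} that $dom(\sqrt{-L^\delta_F}) = \shh = D(\epsilon)$, where $\epsilon$ is the closed symmetric form associated with $-L^\delta_F$; by the general theory of Friedrichs extensions (Appendix \ref{apA}), $\epsilon$ is precisely the closure of the form $(f,g) \mapsto \langle -L^\delta f, g\rangle_{L^2(d\mu)}$ defined on $D$, and by Proposition \ref{p2} that form equals $(f,g) \mapsto \frac{1}{2}\langle f', g'\rangle_{L^2(d\mu)}$ on $D$. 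Since $D$ is dense in $\shh$ (Proposition \ref{eqH}), and the $\shh$-norm \eqref{Hnorm2} controls both $\|f\|_{L^2(d\mu)}$ and $\|f'\|_{L^2(d\mu)}$, the bilinear form $(f,g) \mapsto \frac{1}{2}\langle f', g'\rangle_{L^2(d\mu)}$ is continuous on $\shh \times \shh$ and therefore coincides with $\epsilon(f,g)$ for all $f, g \in \shh = D(\epsilon)$.

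Next I would use the standard relation between a non-negative self-adjoint operator and its form: for $f \in dom(-L^\delta_F)$ and $g \in D(\epsilon) = dom(\sqrt{-L^\delta_F})$, one has $\langle -L^\delta_F f, g\rangle_{L^2(d\mu)} = \epsilon(f,g) = \langle \sqrt{-L^\delta_F} f, \sqrt{-L^\delta_F} g\rangle_{L^2(d\mu)}$; this is exactly the content recalled before Proposition \ref{the1} in Appendix \ref{apA}. Combining this with the previous paragraph, for $f, g \in dom(-L^\delta_F) \subset D(\epsilon)$ we get
\begin{equation*}
\langle -L^\delta_F f, g\rangle_{L^2(d\mu)} = \epsilon(f,g) = \tfrac{1}{2}\langle f', g'\rangle_{L^2(d\mu)},
\end{equation*}
where $f', g'$ denote the distributional derivatives, which lie in $L^2(d\mu)$ since $f, g \in \shh$. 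Symmetry of $\epsilon$ (it is the closure of a symmetric form) then gives $\epsilon(f,g) = \epsilon(g,f) = \langle -L^\delta_F g, f\rangle_{L^2(d\mu)} = \langle f, -L^\delta_F g\rangle_{L^2(d\mu)}$, which is the remaining equality in \eqref{symeLFStatement} after inserting signs.

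Finally I would assemble these into the three-term chain of \eqref{symeLFStatement}: $\langle f, L^\delta_F g\rangle_{L^2(d\mu)} = \langle L^\delta_F f, g\rangle_{L^2(d\mu)} = -\frac{1}{2}\langle f', g'\rangle_{L^2(d\mu)}$, simply negating the identities above. The only point requiring a little care — and the main obstacle — is making rigorous that the closed form $\epsilon$ really is the $\shh$-continuous extension of $\frac{1}{2}\langle \cdot', \cdot'\rangle_{L^2(d\mu)}$ rather than something strictly larger: this is where Lemma \ref{l1} (equality $D(\epsilon) = \shh$, not merely inclusion) and the density of $D$ in $\shh$ are essential, since a priori the closure of a form could have a smaller domain than $\shh$, but Lemma \ref{l1} rules this out. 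Everything else is a routine application of the spectral/form calculus for non-negative self-adjoint operators recalled in the appendix.
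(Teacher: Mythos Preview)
Your proof is correct and follows essentially the same route as the paper: both arguments rest on the density of $D$ in $\shh$ (Proposition \ref{eqH}), the identity $\langle -L^\delta f, g\rangle_{L^2(d\mu)} = \tfrac{1}{2}\langle f',g'\rangle_{L^2(d\mu)}$ on $D$ (Proposition \ref{p2}), and the inclusion $dom(-L^\delta_F)\subset\shh$ via Lemma \ref{l1}. The only cosmetic difference is that the paper carries out the approximation by hand---fixing $f\in D$, taking a sequence $g_n\in D$ with $g_n\to g$ in $\shh$, and passing to the limit in $\langle -L^\delta f, g_n\rangle_{L^2(d\mu)} = \tfrac{1}{2}\langle f',g_n'\rangle_{L^2(d\mu)}$---whereas you invoke the abstract form identity $\epsilon(f,g)=\langle -L^\delta_F f,g\rangle_{L^2(d\mu)}$ from Proposition \ref{the1} and the characterization $\epsilon(f,g)=\tfrac{1}{2}\langle f',g'\rangle_{L^2(d\mu)}$ on $\shh$ (which the paper records separately as Remark \ref{r3.2}, after this proposition but logically independent of it).
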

\begin{proof}
Since $- L^{\delta}_F$ is self-adjoint, the first equality is evident. We now check that
\begin{equation} \label{E100}
  \langle f,-L^{\delta}_F g\rangle_{L^2(d\mu)} = \frac{1}{2}\langle f', g'\rangle_{L^2(d\mu)},
\end{equation}
for all $f,g \in dom(-L^{\delta}_F)$. 
By Lemma \ref{l1} and Proposition \ref{the1} item (1) and (3),
we observe that $dom(-L^{\delta}_F) \subset \shh$.
Since $D$ is dense in $\shh$,
we only need to prove \eqref{E100} for $f\in D$ and $g\in dom(-L^{\delta}_F)$. By item (1) of Proposition \ref{eqH}, there exists a sequence $(g_n)$ of functions $g_n\in D$ such that $\displaystyle\lim_n g_n = g$ and $\displaystyle\lim_{n}g'_n = g'$ in $L^{2}(d\mu)$. Taking into account the first equality in
\eqref{symeLFStatement}
 and the fact that $-L^{\delta}_F$ is an extension of $-L^\delta$, by Proposition \ref{p2}, we get

\begin{eqnarray*}
\langle f, - L^{\delta}_F g\rangle_{L^2(d\mu)} &=& \langle -L^{\delta}f, g\rangle_{L^2(d\mu)} = \lim_{n} \langle -L^{\delta}f, g_n\rangle_{L^2(d\mu)}\\
& =& \frac{1}{2}\lim_{n}\langle f', g_n'\rangle_{L^2(d\mu)} = \frac{1}{2}\langle f',g'\rangle_{L^2(d\mu)}.
\end{eqnarray*}  
\end{proof}

From now on $P = (P_t)_{t \ge 0}$ stands for the semigroup with generator $- L^{\delta}_F$, whose existence is guaranteed by Proposition \ref{PA2}, as described in Definition \ref{d3}, taking $H = L^2(d\mu)$. The objective now is to prove that
$P = P^\delta$ on $L^2(d\mu),$ with $P^\delta$ defined in
\eqref{density}, see Corollary \ref{equivP}.

\begin{remark}\label{r4} 
  By Corollary \ref{PA0}, for every $\phi \in dom(-L^{\delta}_F)$ and $t>0$, we have
  $P_t[\phi]\in dom(-L^{\delta}_F)$. Moreover,  $\partial_t P_t[\phi] = L^{\delta}_FP_t[\phi] =
  P_t [L^{\delta}_F\phi]$
  with values in $L^2(d\mu)$.
  In particular, the map $ t \mapsto P_t[\phi]$ from $[0,T]$ to $L^2(d\mu)$ is of class $C^1([0,T], L^2(d\mu))$,
  therefore, absolutely continuous.
\end{remark}

Let $\phi \in D$, in particular $\phi$ belongs to $dom(-L^{\delta}_F)$.
By Remark \ref{r4}, 
$$P_t[\phi] = \phi + \int_{0}^{t}P_s[L^{\delta}_F\phi]ds = \phi + \int_{0}^{t}L^{\delta}_FP_s[\phi]ds.$$
Setting $v(t):= P_t[\phi]$, taking into account the first equality in the statement of Proposition \ref{symeLF} and that 
$L^{\delta} = L^{\delta}_F$ on $D$,  for all $f\in D$, we get
\begin{eqnarray}
\nonumber  \langle v(t),f\rangle_{L^2(d\mu)} &=&  \langle \phi,f\rangle_{L^2(d\mu)} + \int_{0}^{t}\langle L_F^\delta v(s), f\rangle_{L^2(d\mu)}ds\\
\label{FPProv}&=& \langle \phi,f\rangle_{L^2(d\mu)} + \int_{0}^{t}\langle v(s),L^{\delta}_Ff\rangle_{L^2(d\mu)}ds.
\end{eqnarray}


Consequently
\begin{equation}\label{FP}
  \langle v(t),f\rangle_{L^2(d\mu)} =
  \langle \phi,f\rangle_{L^2(d\mu)} + \int_{0}^{t}\langle v(s),L^{\delta}f\rangle_{L^2(d\mu)}ds, \ t \in [0,T], \ \forall f \in D.
\end{equation}  
Let us fix $x\geq 0$ and $\phi \in D$.
Taking the expectation on \eqref{lmpBis} (with $s = 0$), substituting there $f$ with $\phi$,  using \eqref{density}
and the fact that $X_t^{s,x}$ has the same law as $X_{t-s}^{0,x}$, 
we obtain
\begin{equation}\label{Eequation}
  P^{\delta}_t[\phi](x) = \phi(x) + \int_{0}^{t}P^{\delta}_s[L^{\delta}\phi](x)ds.
  \end{equation}
Evaluating the $L^2(d\mu)$-inner product of $P^{\delta}_t[\phi]$  against $f \in D$, taking into account \eqref{Psymmetry} in Proposition \ref{c1},
we obtain
\begin{eqnarray} \label{EPdelta}
\langle P^{\delta}_t[f], \phi \rangle_{L^2(d\mu)} &=&
\langle P^{\delta}_t[\phi], f \rangle_{L^2(d\mu)} = \langle \phi, f\rangle_{L^2(d\mu)} +
     \int_0^t \langle P^{\delta}_s[L^{\delta}\phi], f \rangle_{L^2(d\mu)}ds
 \nonumber \\ &=&
 \langle \phi, f \rangle_{L^2(d\mu)} +
                                                                             \int_0^t \langle L^{\delta}\phi, P^{\delta}_s[f] \rangle_{L^2(d\mu)}ds,
  \forall \phi \in D.
\end{eqnarray}

Setting $u(t) := P^{\delta}_t[f], t \ge 0$,
we see that $u$ also solves \eqref{FP} (substituting $v$ with $u$). Next, we wish to show that for each $t \in [0,T]$ and $\phi \in D$, $P^{\delta}_t[\phi] = P_t[\phi]$. This will be a consequence of the following result. 
\begin{proposition}\label{uni1}   We fix $\phi \in D$.
  There exists at most one function
$v \in L^{1,2}(dt d\mu)$
   that satisfies \eqref{FP}.
  
      \end{proposition}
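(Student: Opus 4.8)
The plan is a Neumann--series argument tested against a carefully chosen family dense in $L^2(d\mu)$. Let $v_1,v_2\in L^{1,2}(dt\,d\mu)$ both satisfy \eqref{FP} and set $w:=v_1-v_2$; by linearity $\langle w(t),f\rangle_{L^2(d\mu)}=\int_0^t\langle w(s),L^\delta f\rangle_{L^2(d\mu)}\,ds$ for every $f\in D$ and a.e. $t\in[0,T]$. It is enough to prove $\langle w(t),f\rangle_{L^2(d\mu)}=0$ for a.e. $t$ and all $f$ ranging over a family that is dense in $L^2(d\mu)$, since then $w(t)=0$ in $L^2(d\mu)$ for a.e. $t$, i.e. $v_1=v_2$ in $L^{1,2}(dt\,d\mu)$. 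So I would work with test functions outside $D$ on which the operator $L^\delta$ stays, to make iteration possible.

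Fix $\alpha\in(0,\tfrac1{2T})$, put $f_j(x):=e^{-\alpha x^2}x^{2j}$ for $j\ge 0$ and $\mathcal{E}:=\mathrm{span}\{f_j:j\ge 0\}$. Each $f_j$ is smooth and even, hence $f_j\in\shd_{L^\delta}(\R_+)$, and $f_j,f_j',L^\delta f_j\in L^2(d\mu)$ by the Gaussian decay. Multiplying by the cut-off functions $(\chi_n)$ of \eqref{eq:chin} gives $f_j\chi_n\in D$ with $f_j\chi_n\to f_j$ and $L^\delta(f_j\chi_n)\to L^\delta f_j$ in $L^2(d\mu)$ (dominated convergence, using that $\chi_n',\chi_n''$ vanish near $0$ and are $O(1/n)$, $O(1/n^2)$), so the homogeneous identity extends to every $f\in\mathcal{E}$. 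In the variable $y=x^2$ the operator $L^\delta$ reads $2y\,\partial_y^2+\delta\,\partial_y$ on $L^2(\tfrac12 y^{\delta/2-1}dy)$ and sends $p(y)e^{-\alpha y}$ to $q(y)e^{-\alpha y}$ with $q$ polynomial; hence $L^\delta(\mathcal{E})\subset\mathcal{E}$ and the identity iterates to
\[
\langle w(t),f\rangle_{L^2(d\mu)}=\int_{0<s_n<\cdots<s_1<t}\langle w(s_n),(L^\delta)^n f\rangle_{L^2(d\mu)}\,ds_n\cdots ds_1,\qquad f\in\mathcal{E},\ n\ge 1,
\]
so that $|\langle w(t),f\rangle_{L^2(d\mu)}|\le\dfrac{t^{\,n-1}}{(n-1)!}\,\|w\|_{L^{1,2}(dt\,d\mu)}\,\|(L^\delta)^n f\|_{L^2(d\mu)}$.

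The crux is to bound $\|(L^\delta)^n f_j\|_{L^2(d\mu)}$ by $C_j\,n!\,\rho^{-n}$ for some $\rho>T$. I would deduce this from the classical explicit formula $P^\delta_t[g](x)=(1+2\alpha t)^{-\delta/2}\exp\!\big(-\tfrac{\alpha x^2}{1+2\alpha t}\big)$ for $g(x)=e^{-\alpha x^2}$ (up to the time-normalisation of \eqref{sqbs}): differentiating in the parameter $\alpha$ shows that $t\mapsto P^\delta_t[f_j]$ extends, for complex $t$ with $|t|<\tfrac1{2\alpha}$, to an $L^2(d\mu)$-valued analytic map, bounded on each closed disc $|t|\le\rho<\tfrac1{2\alpha}$ (there $\mathrm{Re}\,\tfrac1{1+2\alpha t}>0$). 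Since $(L^\delta)^n f_j$ is the $n$-th $t$-derivative of $P^\delta_t[f_j]$ at $t=0$ (Kolmogorov's equation, legitimate here by the explicit formula), Cauchy's estimate gives $\|(L^\delta)^n f_j\|_{L^2(d\mu)}\le C_j(\rho)\,n!\,\rho^{-n}$. Choosing $\rho\in(T,\tfrac1{2\alpha})$ --- this is where $\alpha<\tfrac1{2T}$ enters --- the previous display forces $|\langle w(t),f_j\rangle_{L^2(d\mu)}|\le C_j(\rho)\,n\,(t/\rho)^{n-1}\to 0$ as $n\to\infty$, for every $t\in[0,T]$; hence $\langle w(t),f_j\rangle_{L^2(d\mu)}=0$ for all $j\ge 0$ and a.e. $t$. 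Finally $\mathcal{E}$ is dense in $L^2(d\mu)$: via $y=x^2$, vanishing of $\langle h,f_j\rangle_{L^2(d\mu)}$ for all $j$ says that the finite, exponentially decaying signed measure $\tfrac12 h(\sqrt y)e^{-\alpha y}y^{\delta/2-1}\,dy$ has all its moments zero, hence is the zero measure, so $h=0$. This gives $w(t)=0$ in $L^2(d\mu)$ for a.e. $t$ and proves the proposition. (An alternative, perhaps closer to the paper's framework, is to test \eqref{FP} against $s\mapsto P_{T_0-s}[\psi]$ with $\psi\in dom(-L^\delta_F)$ and use Remark \ref{r4}; there the obstacle is instead to first extend \eqref{FP} to test functions in $dom(-L^\delta_F)$, i.e. that $D$ is an operator core for $L^\delta_F$.)
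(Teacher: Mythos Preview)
Your argument is correct and takes a genuinely different route from the paper.  The paper proves uniqueness via Lemma~\ref{luni1}: it first extends the homogeneous identity from $D$ to $dom(-L^\delta_F)$ by a self-adjointness argument, then to time-dependent test functions $\Phi\in C^1([0,T];L^2(d\mu))\cap C^0([0,T];dom(-L^\delta_F))$, and finally tests against $\Phi_t(s)=P_{t-s}[g]$, $g\in D$, which solves $\partial_s\Phi_t+L^\delta_F\Phi_t=0$ and kills the right-hand side.  This is the classical adjoint-equation method, and it relies on the Friedrichs extension machinery developed in Section~\ref{Fext}.  Your proof bypasses that machinery entirely: you work with the explicit $L^\delta$-invariant family $\mathcal E=\mathrm{span}\{x^{2j}e^{-\alpha x^2}\}$, iterate the identity, and control $\|(L^\delta)^n f_j\|_{L^2(d\mu)}$ by the analytic continuation in $t$ of the known formula $P^\delta_t[e^{-\alpha x^2}]=(1+2\alpha t)^{-\delta/2}e^{-\alpha x^2/(1+2\alpha t)}$ together with Cauchy's estimate.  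The density of $\mathcal E$ via moment determinacy then finishes the job.  The paper's approach is more abstract and transfers to any symmetric semigroup; yours is more elementary but leans on the explicit Laplace transform of the squared Bessel process and is specific to this operator.  Your parenthetical remark at the end is close to what the paper actually does, except that the paper does \emph{not} need $D$ to be an operator core: the extension to $dom(-L^\delta_F)$ is obtained directly from the self-adjointness of $L^\delta_F$ (step~(1) of the proof of Lemma~\ref{luni1}).

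One small correction: with the paper's cut-offs $\chi_n(x)=\chi(x-(n+1))$ one has $\chi_n'=\chi'(\cdot-(n+1))$ and $\chi_n''=\chi''(\cdot-(n+1))$, which are $O(1)$, not $O(1/n)$ and $O(1/n^2)$.  This does not affect your conclusion, since the extra terms in $L^\delta(f_j\chi_n)-\chi_n L^\delta f_j$ are supported in $[n,n+1]$, where $f_j$, $f_j'$ and $f_j/x$ decay like a Gaussian in $n$; dominated convergence in $L^2(d\mu)$ goes through unchanged.
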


  \begin{proof}
Let $u,v\in L^{1,2}(dt d\mu)$
    be solutions of \eqref{FP}.
    To show that $w = v - u$  vanishes, we will apply the lemma below, see
    \eqref{inter1}.
  \end{proof}
  \begin{lemma}\label{luni1}
    Let
$w \in  L^{1,2}(dt d\mu).$
    Suppose that for every $f \in D$, we have
	\begin{equation}\label{inter1}
		\langle w(t),f\rangle_{L^2(d\mu)} = \int_{0}^{t}\langle w(s),L^{\delta}f\rangle_{L^2(d\mu)}ds, t \in [0,T],
              \end{equation}
              or
	\begin{equation}\label{inter1Bis}
          \langle w(t),f\rangle_{L^2(d\mu)} =
          \int_{t}^{T}\langle w(s),L^{\delta}f\rangle_{L^2(d\mu)}ds, t \in [0,T].
              \end{equation}
              Then $w \equiv 0$.
\end{lemma}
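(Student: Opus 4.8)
The plan is to reduce the second alternative \eqref{inter1Bis} to the first one and then run a Duhamel duality argument against the Bessel semigroup. If $w$ satisfies \eqref{inter1Bis}, then $\tilde w(t):=w(T-t)$ lies in $L^{1,2}(dt\,d\mu)$ and satisfies \eqref{inter1}, so it suffices to treat \eqref{inter1}. From \eqref{inter1} one reads that, for each fixed $f\in D$, the scalar map $t\mapsto\langle w(t),f\rangle_{L^2(d\mu)}$ is absolutely continuous with derivative $\langle w(t),L^{\delta}f\rangle_{L^2(d\mu)}$ for a.e.\ $t$, and vanishes at $t=0$; since $D$ is dense in $L^2(d\mu)$ (Proposition \ref{eqH}(2)) this already gives $w(0)=0$ in $L^2(d\mu)$.

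Let $P=(P_t)_{t\ge 0}$ be the semigroup generated by $-L^{\delta}_F$. Fix $\phi\in D\subset dom(-L^{\delta}_F)$ and $t_0\in(0,T]$. The heart of the proof is to show that $s\mapsto\langle w(s),P_{t_0-s}[\phi]\rangle_{L^2(d\mu)}$ is constant on $[0,t_0]$: formally its derivative is $\langle w'(s),P_{t_0-s}[\phi]\rangle-\langle w(s),L^{\delta}_F P_{t_0-s}[\phi]\rangle$, and the two terms cancel by Remark \ref{r4} ($\partial_r P_r[\phi]=L^{\delta}_F P_r[\phi]$) as soon as \eqref{inter1} is known with test function $f=P_{t_0-s}[\phi]$. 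Since $w$ is only $L^1$ in time, I would first mollify $w$ in $t$ with a one-sided kernel, getting $w_\eta\in C^1([\eta,T];L^2(d\mu))$ which still satisfies $\frac{d}{dt}\langle w_\eta(t),f\rangle=\langle w_\eta(t),L^{\delta}f\rangle$ for $f\in D$; the cancellation then applies rigorously to the genuinely $C^1$ function $s\mapsto\langle w_\eta(s),P_{t_0-s}[\phi]\rangle$, so it is constant on $[\eta,t_0]$, i.e.\ $\langle w_\eta(t_0),\phi\rangle=\langle w_\eta(t_1),P_{t_0-t_1}[\phi]\rangle$ for $\eta\le t_1<t_0$. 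Letting $\eta\downarrow 0$ yields $\langle w(t_0),\phi\rangle=\langle w(t_1),P_{t_0-t_1}[\phi]\rangle$ for all $0<t_1<t_0$. Writing the right-hand side as $\langle w(t_1),P_{t_0}[\phi]\rangle+\langle w(t_1),(P_{t_0-t_1}-P_{t_0})[\phi]\rangle$, the first term tends to $\langle w(0),P_{t_0}[\phi]\rangle=0$ as $t_1\downarrow 0$, and since $\|(P_{t_0-t_1}-P_{t_0})[\phi]\|_{L^2(d\mu)}\le t_1\|L^{\delta}\phi\|_{L^2(d\mu)}$ the second is at most $t_1\|w(t_1)\|_{L^2(d\mu)}\|L^{\delta}\phi\|_{L^2(d\mu)}$, which tends to $0$ along a sequence $t_1\downarrow 0$ realizing $\liminf_{t\downarrow 0}t\|w(t)\|_{L^2(d\mu)}=0$ (such a sequence exists because $w\in L^1([0,T];L^2(d\mu))$). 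Hence $\langle w(t_0),\phi\rangle=0$ for every $t_0$ and every $\phi\in D$, and density of $D$ in $L^2(d\mu)$ gives $w\equiv 0$.

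The main obstacle is precisely the step used in the cancellation: \eqref{inter1} is assumed only for $f\in D$, whereas I need it (and its mollified version) for $f=P_{t_0-s}[\phi]\in dom(-L^{\delta}_F)$, i.e.\ I need \eqref{inter1} to extend to all of $dom(-L^{\delta}_F)$, equivalently that $D$ be a core for $-L^{\delta}_F$ (that $(\lambda-L^{\delta})D$ be dense in $L^2(d\mu)$ for some $\lambda>0$). I would prove the latter by observing that any $g\in L^2(d\mu)$ orthogonal to $(\lambda-L^{\delta})D$ is, by the $\mu$-symmetry of $L^{\delta}$ (Remark \ref{r1}(2), Proposition \ref{p2}) and interior ODE regularity, a classical solution of $L^{\delta}g=\lambda g$ on $(0,\infty)$ lying in $L^2(d\mu)$ and exhibiting at $0$ the boundary behaviour forced by the boundary-value-free test functions of $D$; the indicial analysis at $0$ together with $L^2(d\mu)$-integrability at $\infty$ leaves only $g=0$, since the non-negative self-adjoint operator $-L^{\delta}_F$ has no positive eigenvalue. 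A self-contained alternative that bypasses this abstract statement is to run the same Duhamel computation with the concrete semigroup $(P^{\delta}_t)$ in place of $(P_t)$: for $\phi\in D$, $r>0$, the function $P^{\delta}_r[\phi]$ belongs to $\shd_{L^{\delta}}(\R_+)$ (by \eqref{Bd}, $x\mapsto p^{\delta}_r(x,y)$ is $C^2$ and even near $0$, so $(P^{\delta}_r[\phi])'(0)=0$), one has $L^{\delta}(P^{\delta}_r[\phi])=P^{\delta}_r[L^{\delta}\phi]\in L^2(d\mu)$, and the cut-offs $(P^{\delta}_r[\phi])\chi_n\in D$ converge to $P^{\delta}_r[\phi]$ together with their $L^{\delta}$-images in $L^2(d\mu)$, which directly allows $f=P^{\delta}_{t_0-s}[\phi]$ as a test function in \eqref{inter1}. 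In either route the algebraic part — the cancellation in the derivative of $s\mapsto\langle w(s),P_{t_0-s}[\phi]\rangle$ — is immediate from the symmetry of $L^{\delta}$ and the semigroup property; the real work is this functional-analytic bookkeeping together with the routine time-mollification forced by the absence of any a priori time regularity of $w$.
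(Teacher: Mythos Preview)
Your Duhamel argument --- testing $w$ against $s\mapsto P_{t_0-s}[\phi]$ and using $\partial_s P_{t_0-s}[\phi]+L^\delta_F P_{t_0-s}[\phi]=0$ --- is exactly the idea the paper implements in its concluding step, and you have correctly isolated the real difficulty: passing from test functions $f\in D$ to $f=P_r[\phi]\in dom(-L^\delta_F)$.

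Where you differ from the paper is in how this extension and the time regularity are handled. The paper neither proves that $D$ is a core nor cuts off $P^\delta_r[\phi]$; instead (its step 1) it rewrites \eqref{inter1} as $\langle w(t),f\rangle=\bigl\langle\int_0^t w(s)\,ds,\,L^\delta_F f\bigr\rangle$ for $f\in D$, argues from the $L^2(d\mu)$-continuity of the left side that $\int_0^t w(s)\,ds\in dom((-L^\delta_F)^*)=dom(-L^\delta_F)$, and then self-adjointness gives \eqref{inter1} for every $f\in dom(-L^\delta_F)$ at once. Steps 2--4 lift the resulting identity to time-dependent $\Phi$ by tensor approximation (Lemma \ref{l2}) and mollification \emph{of $\Phi$} in time --- so the regularization sits on the test-function side, not on $w$. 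Taking $\Phi_t(s)=P_{t-s}[g]$ then finishes directly, with no endpoint limit $t_1\downarrow 0$. This side-steps your ODE/core discussion and the mild circularity in your limit argument (your claim $\langle w(t_1),P_{t_0}[\phi]\rangle\to 0$ already presupposes the extension to $f=P_{t_0}[\phi]$).

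Your route (b) is a legitimate concrete alternative: from the power series for $I_\nu$ one sees that $x\mapsto p^\delta_r(x,y)$ is smooth and even, so $P^\delta_r[\phi]\in\shd_{L^\delta}(\R_+)$ and $(P^\delta_r[\phi])\chi_n\in D$; the commutator terms in $L^\delta[(P^\delta_r[\phi])\chi_n]-\chi_n L^\delta[P^\delta_r[\phi]]$ are supported on $[n,n+1]$ and vanish in $L^2(d\mu)$ since $P^\delta_r[\phi]\in\shh$ (Proposition \ref{stabH}) and $L^\delta P^\delta_r[\phi]=P^\delta_r[L^\delta\phi]\in L^2(d\mu)$. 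This buys independence from the abstract adjoint trick, at the price of explicit kernel work; the paper's route buys brevity and a clean separation between the operator-theoretic extension and the dynamical cancellation.
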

              \begin{proof} \
                We only suppose \eqref{inter1}
               since, under     \eqref{inter1Bis},
               one would proceed similarly.
   
              \begin{enumerate}
              \item
                We start proving that
                %
                for every $f \in dom (-L^{\delta}_F)$
        \begin{equation}\label{inter2}
          \langle w(t),f\rangle_{L^2(d\mu)} = \int_{0}^{t}\langle w(s),
          L^{\delta}_F f\rangle_{L^2(d\mu)}ds.
	\end{equation}
        Since $L^\delta_F$ extends $L$, by \eqref{inter1},
        and using the fact that $\int_0^t w(s) ds$ exists
        as a Bochner integral with values in $L^2(d\mu),$ we have 

 \begin{equation}\label{inter3bis}
   \langle w(t),f\rangle_{L^2(d\mu)} =
 \int_{0}^{t} \langle w(s), L^{\delta}_F f \rangle_{L^2(d\mu)}
  = \left\langle \int_{0}^{t}w(s) ds,
          L^{\delta}_F f\right\rangle_{L^2(d\mu)},
	\end{equation}
for every  $f\in D$. 

        Since $f \mapsto \langle w(t),f\rangle_{L^2(d\mu)}$ is continuous
        with respect to the $L^2(d\mu)$-norm, 
        then
$$\int_{0}^{t}w(s)ds \in dom((-L_F^{\delta})^*) = dom(-L_F^{\delta}).$$

Consequently, since $-L^\delta_F$ is self-adjoint,  by \eqref{inter3bis}
for every $ f \in D$ we have
\begin{equation}
\label{inter4}
  \langle w(t),f\rangle_{L^2(d\mu)} =
    \left\langle L^{\delta}_F  \int_{0}^{t}w(s)ds, f\right\rangle_{L^2(d\mu)}.
\end{equation}
Since $D$  is dense in $L^2(d\mu)$, \eqref{inter4} holds
for every $f \in L^2(d\mu)$, in particular for any $f \in  dom(-L_F^{\delta})$. Again, being $-L^\delta_F$ self-adjoint, and
again by Bochner integral properties,
we now obtain \eqref{inter2} for every
$f \in  dom(-(L_F^{\delta}))$.

\item Next, we check that for every $t \in [0,T]$,
  
		\begin{equation}\label{inter3}
                  \langle w(t), \Phi(t)  \rangle_{L^2(d\mu)} =
                  \int_{0}^t \langle(w(r), \partial_r\Phi + L^{\delta}_F\Phi)(r)
                  \rangle_{L^2(d\mu)}dr,
              \end{equation}
              for every $\Phi \in C^1([0,T]; dom(-L^{\delta}_F)),$
              where $dom(-L^{\delta}_F)$ is equipped with
              the graph norm $\lVert \cdot \rVert_{dom(-L^{\delta}_F)},$
i.e.
              $$ \lVert f \rVert^2_{dom(-L^{\delta}_F)}: =
              \lVert f \rVert^2_{L^2(d\mu)} + \lVert L^\delta_{F} f \rVert^2_{L^2(d\mu)}.
$$
              Under the norm $\|\cdot \|_{dom(-L^{\delta}_F)}$, since
          $-L^{\delta}_F$ is a closed operator, then $dom(-L^{\delta}_F)$ is a Hilbert space. It will be enough to prove \eqref{inter3} for
\begin{equation} \label{ESpecialForm}
  \Phi(t,x):= l(t)f(x),
\end{equation}
  where $l\in C^1([0,T],\R_+)$
          and 
          $f\in dom(-L^{\delta}_F)$.
          Indeed, by Lemma \ref{l2}, 
      taking $ \hat B= dom(-L^{\delta}_F)$,    
      there exists a sequence $\{\Phi_n; n\ge 1\} \subset C^1([0,T];dom(-L^{\delta}_F))$, of type $\Phi_n(t,\cdot) =
      \sum_k f^n_k l^n_k(t)$, $f^n_k \in dom(-L^{\delta}_F), l^n_k \in C^1([0,T];\R_+)$ such that $\Phi_n \to \Phi$ in $C^1([0,T];dom(-L^{\delta}_F)).$

  \item Let us prove now  \eqref{inter3} for $ \Phi$ of the form \eqref{ESpecialForm}. 
    Integrating
          by parts and using \eqref{inter2}, 
          we get
          \begin{eqnarray*}
            \langle w(t), \Phi(t)\rangle_{L^2(d\mu)}
            &=&  l(t) \langle w(t), f\rangle_{L^2(d\mu)}\\ 
&=&
                 \int_0^t \dot l(r) \langle w(r), f\rangle_{L^2(d\mu)} dr 
            + \int_0^t l(r) \langle w(r), L^\delta_F f \rangle_{L^2(d\mu)}dr \\
            &=&  \int_0^t  \langle w(r), \dot l(r)f\rangle_{L^2(d\mu)} dr +
                \int_0^t  \langle w(r),l(r) L^\delta_F f \rangle_{L^2(d\mu)}dr \\
            &=&     \int_0^t  \langle w(r),
                \partial_r \Phi(r) \rangle_{L^2(d\mu)} + 
                    \int_0^t  \langle w(r),L^\delta_F \Phi(r) \rangle_{L^2(d\mu)}dr,
            \end{eqnarray*}
where $\dot l$ denotes the derivative of $l$. 
            This yields therefore \eqref{inter3}.
          \item We extend \eqref{inter3} for
            $ \Phi \in C^1([0,T]; L^2(d\mu)) \cap C^0([0,T];dom(-L^\delta_F))$.

For such $\Phi$, we set \[\Phi_{\epsilon}(t) = \int_{0}^{t}\dfrac{\Phi((s + \epsilon)\wedge T) - \Phi(s)}{\epsilon}ds + \Phi(0), t \in[0,T].\]

Clearly, $\Phi_{\epsilon} \in C^1([0,T];dom(-L^{\delta}_F))$. By \eqref{inter3} replacing $\Phi$ with $\Phi_{\epsilon}, $ for $t \in [0,T]$, we get
\begin{eqnarray*}
\langle w(t),\Phi_{\epsilon}(t)\rangle_{L^2(d\mu)} &=& \int_{0}^{t}\left\langle w(s),\dfrac{\Phi((s+\epsilon)\wedge T) - \Phi(s)}{\epsilon}\right\rangle_{L^2(d\mu)}ds \\        &+& \int_{0}^{t}\left\langle w(s),\int_{0}^{s}\dfrac{L^{\delta}_F\Phi((r+\epsilon)\wedge T) - L^{\delta}_F\Phi(r)}{\epsilon}dr\right\rangle_{L^2(d\mu)}ds\\ &=:& \Phi_{1,\epsilon}(t) + \Phi_{2,\epsilon}(t).
\end{eqnarray*}
Let $t\in [0,T]$. We observe $\Phi_{\epsilon}(t) \xrightarrow[\epsilon \to 0]{} \Phi(t)$ in $L^2(d\mu)$ and hence, 
$$\langle w(t),\Phi_{\epsilon}(t)\rangle_{L^2(d\mu)} \xrightarrow[\epsilon \to 0]{} \langle w(t),\Phi(t)\rangle_{L^2(d\mu)}.$$

Concerning $\Phi_{1,\epsilon}$, since $\Phi \in C^1([0,T],L^2(d\mu))$,
we first extend $\Phi(s)$ after $T$ so that $\dot \Phi(s)  = \dot{\Phi}(T),$
for $s \ge t$.
Then, using mean value theorem
\[\dfrac{\Phi_{\epsilon}((s +\epsilon)\wedge T) - \Phi(s)}{\epsilon}
=\dfrac{1}{\varepsilon} \int_{s}^{s+\varepsilon} \dot \Phi(r) dr
  \xrightarrow[\epsilon \to 0]{}
  \dot {\Phi}(s),
\]
uniformly  in $s$ in $L^2(d\mu)$. So 
\[\Phi_{1,\epsilon}(t) \xrightarrow[\epsilon \to 0]{} \int_{0}^{t}\langle w(s), \Phi(t)\rangle_{L^2(d\mu)}ds.\]

Concerning $\Phi_{2,\epsilon}$, since $L^{\delta}_F\Phi \in C([0,T],L^2(d\mu))$, then
\[\int_{0}^{s}\dfrac{L^{\delta}_F\Phi((r+\epsilon)\wedge T) - L^{\delta}_F\Phi(r)}{\epsilon}dr \xrightarrow[\epsilon \to 0]{} L^{\delta}_F\Phi(s)\]
uniformly in $s$ in $L^2(d\mu)$. Consequently the result follows.

\item The idea is now to prove  
	\begin{equation}\label{inter2.1}
          \langle w(t),g \rangle_{L^2(d\mu)} = 0,\ \ \text{for all}\
        g \in  D, \ t  \in [0,T],
              \end{equation}
              which  will imply $w \equiv 0$
 since $D$ is dense in $L^2(d\mu)$.


\item To prove  \eqref{inter2.1} we fix $t \in [0,T]$ 
and  we set  $\Phi_t(s)=  P_{t-s}[g]$ for $s \in [0,t]$.
  Since $g \in D \subset dom(-L^\delta_F)$,
  by Remark \ref{r4}, for every $t \in [0,T]$,
$\Phi:[0,t] \rightarrow dom(-L^\delta_F)$
belongs to $C^1([0,t];L^2(d\mu)) \cap C^0([0,t];dom(-L_F^\delta)$.
We remark in particular
that, whenever $g \in D$,  the function
$s \mapsto   L^\delta_F P_s  g = P_s L^\delta_F  g$
is continuous and $$ \partial_s \Phi_t(s) + L^{\delta}_F\Phi_t(s) = 0, \ s \in [0,t]. $$
Using  \eqref{inter3}, we obtain
$$    \langle w(t), g\rangle_{L^2(d\mu)} =   \langle w(t), \Phi_t(t)
\rangle_{L^2(d\mu)}
= \int_0^t  \langle w(s), \partial_s \Phi_t(s) + L^\delta_F\Phi_t(s)\rangle ds = 0.$$
This concludes \eqref{inter2.1}.
\end{enumerate}

\end{proof}

\begin{corollary}\label{equivP}
  $P^{\delta}_t[f] = P_t[f]$ for all $f\in L^2(d\mu)$.
\end{corollary}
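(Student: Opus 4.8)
The plan is to combine the uniqueness statement of Proposition \ref{uni1} with a density argument. First I would fix $\phi \in D$. Since $\phi \in D \subset dom(-L^{\delta}_F)$, Remark \ref{r4} guarantees that $t \mapsto P_t[\phi]$ belongs to $C^1([0,T];L^2(d\mu))$, hence in particular to $L^{1,2}(dt\,d\mu)$, and by \eqref{FP} the function $v(t) := P_t[\phi]$ satisfies the weak equation \eqref{FP}. On the other hand, by Proposition \ref{timecont} together with the contraction property (Proposition \ref{c1}), the map $t \mapsto P^{\delta}_t[\phi]$ is continuous and bounded with values in $L^2(d\mu)$, so it too lies in $L^{1,2}(dt\,d\mu)$; and identity \eqref{EPdelta}, rewritten using the symmetry of $\langle\cdot,\cdot\rangle_{L^2(d\mu)}$, shows precisely that $u(t) := P^{\delta}_t[\phi]$ solves \eqref{FP} with the same initial datum $\phi$ (the roles of test function and datum being exchanged as indicated just after \eqref{EPdelta}). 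Proposition \ref{uni1} then forces $P_t[\phi] = P^{\delta}_t[\phi]$ in $L^2(d\mu)$ for $dt$-a.e. $t \in [0,T]$.

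Next I would upgrade this from ``almost every $t$'' to ``every $t$''. Both maps $t \mapsto P_t[\phi]$ and $t \mapsto P^{\delta}_t[\phi]$ are continuous from $[0,T]$ into $L^2(d\mu)$ (the former by Remark \ref{r4}, the latter by Proposition \ref{timecont} combined with the semigroup property of Lemma \ref{semipro}, exactly as argued inside the proof of Proposition \ref{timecont}), so an equality holding $dt$-a.e. extends by continuity to all $t \in [0,T]$. Hence $P_t[\phi] = P^{\delta}_t[\phi]$ for every $t$ and every $\phi \in D$ (the particular value of $T$ being irrelevant here, one may also iterate the semigroup property to reach all $t \ge 0$).

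Finally I would remove the restriction $\phi \in D$. By Proposition \ref{eqH}(2), $D$ is dense in $L^2(d\mu)$; both $P_t$ and $P^{\delta}_t$ are contractions on $L^2(d\mu)$, by Proposition \ref{PA2} and Proposition \ref{c1} respectively, hence continuous; therefore the identity $P_t[f] = P^{\delta}_t[f]$, valid on the dense subset $D$, extends to all $f \in L^2(d\mu)$, which is the assertion of the corollary.

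The only genuinely delicate point is checking that the hypotheses of Proposition \ref{uni1} are met: namely that $t \mapsto P^{\delta}_t[\phi]$ is Bochner measurable and integrable on $[0,T]$ with values in $L^2(d\mu)$, and that it satisfies the weak equation \eqref{FP} itself and not merely the scalar identity \eqref{EPdelta}. Both are immediate from what has already been established --- continuity (hence strong measurability) and boundedness from Proposition \ref{timecont} and Proposition \ref{c1}, and the passage from \eqref{EPdelta} to \eqref{FP} using only the symmetry of the real inner product $\langle\cdot,\cdot\rangle_{L^2(d\mu)}$ --- so no real obstacle remains; everything else is routine.
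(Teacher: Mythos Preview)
Your proof is correct and follows essentially the same approach as the paper: apply Proposition~\ref{uni1} to conclude $P_t[\phi]=P^\delta_t[\phi]$ for $\phi\in D$, then extend to all of $L^2(d\mu)$ by density and the contraction property of both semigroups. Your extra step upgrading from ``a.e.\ $t$'' to ``every $t$'' via continuity is sound but not strictly needed, since the proof of Lemma~\ref{luni1} (which underlies Proposition~\ref{uni1}) already yields $w(t)=0$ for every $t\in[0,T]$.
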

\begin{proof}
  Let $t \in [0,T]$. We first show the statement
  for $f \in D$. By the considerations just after the Remark \ref{r4}, the function
  $t \mapsto v(t):= P_t [f]$ solves \eqref{FP}.  
Moreover,  $t \mapsto P^\delta_t [f]$ also solves the same equation
  by \eqref{EPdelta}.
  Then, by Proposition \ref{uni1}  we have $P^{\delta}_t[f] = P_t[f]$
  for every $f \in D$.
  By Proposition \ref{c1}, $P_t^\delta$ is continuous on $L^2(d\mu)$. By Proposition \ref{PA2} and Definition \ref{d2}, 
$P_t$ has the same continuity properties.
Since $D$ is dense in $L^2(d\mu)$
the equality extends to all $f \in L^2(d\mu)$.

\end{proof}

By Lemma \ref{l1}, we have $dom(\sqrt{-L^{\delta}_F}) \times dom(\sqrt{-L^{\delta}_F}) = \shh \times \shh$. Therefore,  
the symmetric closed form  $\epsilon$ corresponding to $- L^{\delta}_F$
as described in Proposition \ref{the1}  
(see \eqref{bsf1}) can be characterized as  $\epsilon:\shh \times \shh\rightarrow \R$.

\begin{remark}\label{r3}
  By Proposition \ref{p2} and Proposition \ref{the1}
  item (4) ( with $T = -L^\delta$), for $u,v \in D$, we have  
\begin{equation} \label{er3}
  \epsilon(u,v)=\dfrac{1}{2}\langle v',u'\rangle_{L^2(d\mu)}.
  \end{equation}
\end{remark}

\begin{remark}\label{r3.2}
By Proposition \ref{eqH} (1) 
   $D$ is dense in $\shh$.
Since $\epsilon$ is closed and 
  \eqref{er3} is continuous on $\shh \times \shh$,
  Remark \ref{r3} implies
\begin{equation} \label{bsf2}
\epsilon (u,v) = \dfrac{1}{2}\langle v',u'\rangle_{L^2(d\mu)}, \ \forall  u,v \in \shh.
\end{equation}
\end{remark}

 We can now rewrite \eqref{Hnorm2} as
 
\begin{equation}\label{Hnorm3}
	\Vert f \Vert^2_\shh = \Vert f\Vert^2_{L^2(d\mu)} + \epsilon(f,f). 
\end{equation} 
In order to prove uniqueness of a mild solution of the semilinear PDE \eqref{pde01}, 
we will make use of the following result.

\begin{proposition} \label{uti} For every $f \in \shh$ and $t > 0$, we have
\begin{equation} \label{equti}
  \Vert (P^\delta_t [f])' \Vert_{L^2(d\mu)} \le \dfrac{1}{\sqrt{t}} \Vert f \Vert_{L^2(d\mu)}.\end{equation}
\end{proposition}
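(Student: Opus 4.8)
The plan is to derive \eqref{equti} from the spectral calculus of the self-adjoint operator $-L^\delta_F$, after rewriting $\Vert (P^\delta_t[f])'\Vert_{L^2(d\mu)}$ in terms of $\sqrt{-L^\delta_F}$. By Corollary \ref{equivP} the semigroup $(P_t)$ generated by $-L^\delta_F$ coincides with $(P^\delta_t)$ on $L^2(d\mu)$, so $(P^\delta_t[f])'=(P_t[f])'$ and there is no ambiguity. Fix $f\in\shh$ (in fact only $f\in L^2(d\mu)$ will be used) and $t>0$. By Proposition \ref{stabH}, $P^\delta_t[f]\in\shh$, which by Lemma \ref{l1} means $P_t[f]\in dom(\sqrt{-L^\delta_F})$. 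Applying \eqref{bsf2} of Remark \ref{r3.2} to $g:=P_t[f]\in\shh$, and then the characterisation of the closed form $\epsilon$ provided by Proposition \ref{the1} (namely $\epsilon(g,g)=\Vert\sqrt{-L^\delta_F}\,g\Vert^2_{L^2(d\mu)}$ with $D(\epsilon)=dom(\sqrt{-L^\delta_F})$), one gets
\begin{equation*}
  \Vert (P^\delta_t[f])'\Vert^2_{L^2(d\mu)} = 2\,\epsilon\big(P_t[f],P_t[f]\big) = 2\,\big\Vert\sqrt{-L^\delta_F}\,P_t[f]\big\Vert^2_{L^2(d\mu)}.
\end{equation*}

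Next I would invoke the spectral theorem. Writing $-L^\delta_F=\int_{[0,\infty)}\lambda\,dE_\lambda$, one has $P_t=\int_{[0,\infty)}e^{-t\lambda}\,dE_\lambda$ and $\sqrt{-L^\delta_F}\,P_t=\int_{[0,\infty)}\sqrt\lambda\,e^{-t\lambda}\,dE_\lambda$, hence for every $f\in L^2(d\mu)$
\begin{equation*}
  \big\Vert\sqrt{-L^\delta_F}\,P_t[f]\big\Vert^2_{L^2(d\mu)} = \int_{[0,\infty)}\lambda\,e^{-2t\lambda}\,d\langle E_\lambda f,f\rangle_{L^2(d\mu)} \le \Big(\sup_{\lambda\ge 0}\lambda\,e^{-2t\lambda}\Big)\Vert f\Vert^2_{L^2(d\mu)}.
\end{equation*}
An elementary one-variable computation gives $\sup_{\lambda\ge 0}\lambda\,e^{-2t\lambda}=\tfrac{1}{2et}\le\tfrac{1}{2t}$, the maximum being attained at $\lambda=\tfrac{1}{2t}$. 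Combining this with the previous display,
\begin{equation*}
  \Vert (P^\delta_t[f])'\Vert^2_{L^2(d\mu)} = 2\,\big\Vert\sqrt{-L^\delta_F}\,P_t[f]\big\Vert^2_{L^2(d\mu)} \le \frac{1}{t}\,\Vert f\Vert^2_{L^2(d\mu)},
\end{equation*}
and taking square roots gives \eqref{equti} (in fact with the slightly sharper constant $(et)^{-1/2}$).

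I do not expect a genuine obstacle here: the whole content is lining up three already established facts, namely (i) $P^\delta_t=P_t$ (Corollary \ref{equivP}), (ii) $P_t$ maps $L^2(d\mu)$ into $\shh=dom(\sqrt{-L^\delta_F})$ (Proposition \ref{stabH} and Lemma \ref{l1}), and (iii) $\epsilon(g,g)$ is simultaneously $\tfrac12\Vert g'\Vert^2_{L^2(d\mu)}$ and $\Vert\sqrt{-L^\delta_F}\,g\Vert^2_{L^2(d\mu)}$ (Remark \ref{r3.2} and Proposition \ref{the1}). The only point deserving a little care is that one must apply the form identity \eqref{bsf2} to $P_t[f]$, which is legitimate precisely because $P_t[f]\in\shh$.

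If one prefers to avoid invoking the spectral resolution explicitly, the same estimate follows from an energy argument using only results already in the text. By Remark \ref{r4} and Proposition \ref{symeLF}, for $s>0$ one has $P_s[f]\in dom(-L^\delta_F)$ and $\tfrac{d}{ds}\Vert P_s[f]\Vert^2_{L^2(d\mu)}=2\langle L^\delta_F P_s[f],P_s[f]\rangle_{L^2(d\mu)}=-\Vert (P_s[f])'\Vert^2_{L^2(d\mu)}$, whence $\int_0^t\Vert (P_s[f])'\Vert^2_{L^2(d\mu)}\,ds\le\Vert f\Vert^2_{L^2(d\mu)}$; moreover $s\mapsto\Vert (P_s[f])'\Vert^2_{L^2(d\mu)}=2\Vert\sqrt{-L^\delta_F}P_s[f]\Vert^2_{L^2(d\mu)}$ is non-increasing for $s>0$ (its derivative is non-positive by analyticity of the self-adjoint semigroup $(P_s)$, since $\tfrac{d}{ds}\langle -L^\delta_F P_s[f],P_s[f]\rangle_{L^2(d\mu)}=-2\Vert L^\delta_F P_s[f]\Vert^2_{L^2(d\mu)}\le 0$). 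Therefore $t\,\Vert (P_t[f])'\Vert^2_{L^2(d\mu)}\le\int_0^t\Vert (P_s[f])'\Vert^2_{L^2(d\mu)}\,ds\le\Vert f\Vert^2_{L^2(d\mu)}$, which is \eqref{equti}.
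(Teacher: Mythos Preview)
Your proof is correct and follows the same strategy as the paper: identify $\Vert(P^\delta_t[f])'\Vert^2_{L^2(d\mu)}$ with $2\,\epsilon(P_t[f],P_t[f])$ via \eqref{bsf2}, then bound the latter by $\tfrac{1}{t}\Vert f\Vert^2_{L^2(d\mu)}$. The only difference is that the paper invokes Proposition \ref{utibis} (i.e.\ Lemma 1.3.3 of \cite{fuku}) as a black box for the inequality $\epsilon(P_t[u],P_t[u])\le\tfrac{1}{2t}\Vert u\Vert^2_H$, whereas you reprove exactly that inequality via the spectral resolution (and, as a bonus, recover the sharper constant $(et)^{-1/2}$). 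Your alternative energy argument is also fine, though it implicitly uses that $P_s$ maps $L^2(d\mu)$ into $dom(-L^\delta_F)$ for $s>0$, a standard analyticity fact for self-adjoint semigroups that the paper does not state explicitly (Remark \ref{r4} is formulated only for $\phi\in dom(-L^\delta_F)$).
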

\begin{proof} \
  Proposition \ref{stabH} allows to show that $P^{\delta}_t[f] \in \shh$.
The upper bound follows by Proposition \ref{utibis}
taking into account \eqref{bsf2}.

  \end{proof}

\begin{corollary} \label{cuti} 
We have $P^\delta_t[f] \in \shh$ and
\begin{equation} \label{cutiequation}
  \Vert P^\delta_t [f]\Vert_{\shh} \le \left(1 + \dfrac{1}{\sqrt t}\right) \Vert f \Vert_{L^2(d\mu)}.
  \end{equation}
for every $t>0$ and $f \in L^2(d\mu)$.         
      \end{corollary}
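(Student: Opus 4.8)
The plan is to combine the two pieces of information we already have about $P^\delta_t[f]$: its $L^2(d\mu)$-norm is controlled by Proposition~\ref{c1} (the contraction property, which gives $\Vert P^\delta_t[f]\Vert_{L^2(d\mu)}\le\Vert f\Vert_{L^2(d\mu)}$), and the $L^2(d\mu)$-norm of its derivative is controlled by Proposition~\ref{uti} (the Schauder-type estimate $\Vert(P^\delta_t[f])'\Vert_{L^2(d\mu)}\le t^{-1/2}\Vert f\Vert_{L^2(d\mu)}$). The fact that $P^\delta_t[f]\in\shh$ for $t>0$ and $f\in L^2(d\mu)$ is exactly Proposition~\ref{stabH}, so that part is immediate.

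\medskip

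First I would recall the definition of the $\shh$-norm. By \eqref{Hnorm3}, $\Vert P^\delta_t[f]\Vert^2_\shh=\Vert P^\delta_t[f]\Vert^2_{L^2(d\mu)}+\epsilon(P^\delta_t[f],P^\delta_t[f])$, and by \eqref{bsf2} the form is $\epsilon(g,g)=\frac12\Vert g'\Vert^2_{L^2(d\mu)}$, so in fact $\Vert P^\delta_t[f]\Vert^2_\shh=\Vert P^\delta_t[f]\Vert^2_{L^2(d\mu)}+\frac12\Vert(P^\delta_t[f])'\Vert^2_{L^2(d\mu)}$ (this is just \eqref{Hnorm2}). Then I plug in the two bounds:
\begin{equation*}
\Vert P^\delta_t[f]\Vert^2_\shh\le\Vert f\Vert^2_{L^2(d\mu)}+\frac{1}{2}\cdot\frac{1}{t}\Vert f\Vert^2_{L^2(d\mu)}=\left(1+\frac{1}{2t}\right)\Vert f\Vert^2_{L^2(d\mu)}.
\end{equation*}

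\medskip

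The only remaining issue is cosmetic: the statement of Corollary~\ref{cuti} is phrased with $\left(1+\frac{1}{\sqrt t}\right)\Vert f\Vert_{L^2(d\mu)}$ on the right, so I would take square roots and then bound $\sqrt{1+\frac{1}{2t}}$ by $1+\frac{1}{\sqrt t}$. This last inequality follows since $\left(1+\frac{1}{\sqrt t}\right)^2=1+\frac{2}{\sqrt t}+\frac{1}{t}\ge 1+\frac{1}{2t}$; indeed $\frac{2}{\sqrt t}+\frac{1}{t}\ge\frac{1}{2t}$ is obvious because both terms on the left are positive and $\frac1t\ge\frac{1}{2t}$. I do not anticipate any genuine obstacle here — the corollary is a direct bookkeeping consequence of Proposition~\ref{uti}, Proposition~\ref{c1}, Proposition~\ref{stabH} and the explicit form of the $\shh$-norm; the only mild subtlety is making sure the constant in the clean statement genuinely dominates the sharp constant $\sqrt{1+\frac{1}{2t}}$, which it does for all $t>0$.
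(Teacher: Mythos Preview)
Your argument is correct in spirit and matches the paper's approach, but there is one genuine gap: Proposition~\ref{uti} as stated in the paper applies only to $f\in\shh$, not to arbitrary $f\in L^2(d\mu)$. (Its proof relies on Proposition~\ref{utibis} item~(2), which requires $u\in D(\epsilon)=\shh$.) So your chain ``contraction $+$ Proposition~\ref{uti} $+$ definition of $\Vert\cdot\Vert_\shh$'' establishes \eqref{cutiequation} only for $f\in\shh$, which is exactly what the paper does as a first step.

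To finish, the paper extends to general $f\in L^2(d\mu)$ by density: take $(f_n)\subset\shh$ (even in $D$, by Proposition~\ref{eqH}) with $f_n\to f$ in $L^2(d\mu)$; the inequality \eqref{cutiequation} for each $f_n$ shows $(P^\delta_t[f_n])$ is Cauchy in $\shh$, and since $P^\delta_t:L^2(d\mu)\to\shh$ is continuous (Proposition~\ref{stabH}), the $\shh$-limit is $P^\delta_t[f]$ and the bound passes to the limit. You should add this step; once you do, your proof and the paper's coincide.
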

      \begin{proof} \
        Let $f \in L^2(d\mu)$. We recall that $P_t[f]$
        belongs to $\shh$ by
        Proposition \ref{stabH}. It remains to establish the upper bound.
         If $f \in \shh$ the result follows from
         Propositions \ref{uti} and Proposition \ref{wellP} (which states the contraction property for $P^\delta_t$) taking into account \eqref{Hnorm2}.
In order to extend to any $f \in L^2(d\mu)$, we can make use
 of Proposition \ref{eqH} which yields the existence of a
   a sequence $(f_n)$ in $\shh$ (even in $D$)
  converging in $L^2(d\mu)$ to $f$. Now \eqref{cutiequation} holds for $f_n$, which implies that the
  sequence $P^\delta_t [f_n]$ is Cauchy in $\shh$.
  Therefore it converges to some $g \in L^2(d\mu)$.  
  By Proposition \ref{stabH}, $ P^\delta_t$
  is continuous from $L^2(d\mu)$  to $\shh$, then $g = P_t[f]$ and finally
  (\ref{cutiequation}) extends to $f \in L^2(d\mu)$.

        \end{proof}

\section{The linear PDE}

\label{linearSEC}

In the sequel, we fix $g \in L^2(d\mu)$
and $l \in L^{1,2}(dt d\mu)$.
In this section, we present some tools
concerning the linear PDE
\begin{equation}\label{pde02Lin}
	(\partial_t + L^{\delta})u + l = 0, \ \ u(T)\equiv g.
      \end{equation}

We denote by $B$ the Banach space
$B = L^1([0,T]; \shh)$ i.e. the space
of (classes of) strongly (Bochner) measurable
functions $u: [0,T] \rightarrow \shh$ 
 such that
\begin{equation}\label{Bnorm}
	||u||_B := \int_0^T ||u(t)||_{\shh} dt < \infty.
\end{equation} 

We denote
\begin{equation}\label{linop}
  v(t) := P^\delta_{T-t}[g] + \int_{t}^{T}P^\delta_{s-t}[l(s)] ds, \ t \in [0,T].
      \end{equation}
               We recall that, by Corollary \ref{equivP},  $P^\delta = P$ on $L^2(d\mu)$, where $P$ is the semigroup associated
               with $-L^\delta_F$, see Proposition \ref{PA2} and Definition \ref{d2}.

\begin{proposition}\label{stabB}
Let $v$ be the function defined in \eqref{linop}.
  For every $t \in [0,T],$ 
   $v(t)\in L^2(\mu)$
   and $v:[0,T] \rightarrow L^2(d\mu)$ is continuous.
   Moreover $v$ belongs to $B$.
\end{proposition}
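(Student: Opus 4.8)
The plan is to establish the three claims in \eqref{linop}--Proposition \ref{stabB} one at a time, treating the two summands $v_1(t) := P^\delta_{T-t}[g]$ and $v_2(t) := \int_t^T P^\delta_{s-t}[l(s)]\,ds$ separately and then combining by linearity. For $v_1$: since $g \in L^2(d\mu)$ and $P^\delta_r$ is a contraction on $L^2(d\mu)$ for every $r \ge 0$ (Proposition \ref{c1}), we have $v_1(t) \in L^2(d\mu)$ with $\Vert v_1(t)\Vert_{L^2(d\mu)} \le \Vert g\Vert_{L^2(d\mu)}$ for all $t$. Continuity of $t \mapsto v_1(t) = P_{T-t}[g]$ into $L^2(d\mu)$ follows from strong continuity of the semigroup $(P_t) = (P^\delta_t)$ guaranteed by Proposition \ref{PA2} (via Corollary \ref{equivP}); alternatively, one reduces to the continuity statement proved in Proposition \ref{timecont} together with the semigroup property. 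Finally, membership of $v_1$ in $B = L^1([0,T];\shh)$: for $t < T$, Corollary \ref{cuti} gives $v_1(t) \in \shh$ with $\Vert v_1(t)\Vert_\shh \le (1 + (T-t)^{-1/2})\Vert g\Vert_{L^2(d\mu)}$, and since $\int_0^T (1 + (T-t)^{-1/2})\,dt = T + 2\sqrt{T} < \infty$, we get $v_1 \in B$.

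For $v_2(t) = \int_t^T P^\delta_{s-t}[l(s)]\,ds$, the first task is to see that the integrand $(t,s) \mapsto P^\delta_{s-t}[l(s)]$ is well-defined and integrable in $s$ over $[t,T]$ with values in $L^2(d\mu)$: for a.e. $s$, $l(s) \in L^2(d\mu)$, and contraction gives $\Vert P^\delta_{s-t}[l(s)]\Vert_{L^2(d\mu)} \le \Vert l(s)\Vert_{L^2(d\mu)}$, which is integrable in $s$ over $[0,T]$ since $l \in L^{1,2}(dt\,d\mu)$; hence $v_2(t) \in L^2(d\mu)$ with $\Vert v_2(t)\Vert_{L^2(d\mu)} \le \Vert l\Vert_{L^{1,2}(dt\,d\mu)}$ for every $t$. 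For the $\shh$-bound needed for membership in $B$, I would apply Corollary \ref{cuti} inside the integral: $\Vert P^\delta_{s-t}[l(s)]\Vert_\shh \le (1 + (s-t)^{-1/2})\Vert l(s)\Vert_{L^2(d\mu)}$, so
\begin{equation*}
\Vert v_2(t)\Vert_\shh \le \int_t^T \Bigl(1 + \frac{1}{\sqrt{s-t}}\Bigr)\Vert l(s)\Vert_{L^2(d\mu)}\,ds,
\end{equation*}
and then integrating in $t$ and applying Tonelli's theorem, the kernel $\int_0^s (1 + (s-t)^{-1/2})\,dt = s + 2\sqrt{s} \le T + 2\sqrt{T}$ is bounded uniformly in $s$, whence $\Vert v_2\Vert_B \le (T + 2\sqrt{T})\Vert l\Vert_{L^{1,2}(dt\,d\mu)} < \infty$. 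This also shows $v_2(t) \in \shh$ for a.e. $t$, which is part of what is asserted.

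The remaining and most delicate point is the continuity of $t \mapsto v_2(t)$ into $L^2(d\mu)$, because the upper limit of integration, the argument $l(s)$, and the semigroup time $s-t$ all depend on $t$. I would handle this by a change of variables $r = s - t$, writing $v_2(t) = \int_0^{T-t} P^\delta_r[l(r+t)]\,dr$, and then splitting a difference $v_2(t) - v_2(t')$ (say $t < t'$) into a term controlling the change of domain length, $\int_{T-t'}^{T-t} \Vert P^\delta_r[l(r+t)]\Vert_{L^2(d\mu)}\,dr \le \int_{T-t'}^{T-t}\Vert l(r+t)\Vert_{L^2(d\mu)}\,dr$, which is small by absolute continuity of the integral of $s \mapsto \Vert l(s)\Vert_{L^2(d\mu)} \in L^1$, and a term $\int_0^{T-t'} \Vert P^\delta_r[l(r+t) - l(r+t')]\Vert_{L^2(d\mu)}\,dr \le \int_0^{T-t'}\Vert l(r+t) - l(r+t')\Vert_{L^2(d\mu)}\,dr$, which is small by continuity of translation in $L^1([0,T];L^2(d\mu))$ (the standard density argument: approximate $l$ by continuous functions, for which the estimate is immediate, and use the uniform $L^1$-bound to pass to the limit). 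I expect this continuity-of-$v_2$ argument to be the main obstacle, mostly a bookkeeping matter of handling the moving endpoint together with the $L^1$-translation continuity; everything else reduces to the contraction property (Proposition \ref{c1}/\ref{wellP}), the regularizing estimate of Corollary \ref{cuti}, Tonelli, and strong continuity of the semigroup.
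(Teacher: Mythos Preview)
Your argument is correct and largely parallel to the paper's. The treatment of the first summand and the $B$-membership of the second summand are essentially identical to the paper's proof (same decomposition, same use of the contraction property, strong continuity, Corollary~\ref{cuti}, and Tonelli with the kernel bound $s + 2\sqrt{s} \le T + 2\sqrt{T}$).

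The one genuine difference is in the continuity of $t \mapsto v_2(t)$ in $L^2(d\mu)$. The paper does \emph{not} change variables; instead, for $t > t_0$ it writes $v_1(t) - v_1(t_0) = -(I_1 + I_2)$ with
\[
I_1 = \int_{t_0}^{t} P^\delta_{s-t_0}[l(s)]\,ds, \qquad
I_2 = \int_{t}^{T} \bigl(P^\delta_{s-t_0} - P^\delta_{s-t}\bigr)[l(s)]\,ds,
\]
handles $I_1$ by contraction plus absolute continuity of $\int \lVert l(s)\rVert\,ds$, and for $I_2$ uses the semigroup factorization $P^\delta_{s-t_0} - P^\delta_{s-t} = P^\delta_{s-t}\bigl(P^\delta_{t-t_0} - I\bigr)$, then contraction, strong continuity of $(P^\delta_r)$, and dominated convergence. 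Your route---change of variables $r = s-t$ and then continuity of translation in $L^1([0,T];L^2(d\mu))$---is equally valid; it trades the semigroup identity and strong continuity for a purely measure-theoretic lemma (translation continuity in $L^1$ of a Bochner-integrable function, by density of continuous functions). The paper's approach keeps the semigroup structure in the foreground and avoids invoking that auxiliary lemma; yours is slightly more elementary in that it uses only the contraction property of $P^\delta_r$ in the second term, not its strong continuity.
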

    \begin{remark} \label{RBoch}
Recall that since $v:[0,T] \rightarrow L^2(d\mu)$ is continuous, then it is
        Bochner-measurable.
      \end{remark}
\begin{prooff} (of Proposition \ref{stabB}).
  
  We decompose
  $$ v = v_0  + v_1,$$
  where
\begin{eqnarray*}
  v_0(t) &:=& P^\delta_{T-t}[g],  \\
 v_1(t) &:=&  \int_{t}^{T}P^\delta_{s-t}[l(s)] ds, \ t \in [0,T].
\end{eqnarray*}
We first prove the statement for $v$
replaced by $v_0$ and $v_1$.

Concerning $v_0,$ $v_0(t) \in L^2(d\mu)$ for every $t \in [0,T]$. Since $t\mapsto P^{\delta}_t$ is strongly continuous (see Definition \ref{d2} with $H= L^2(d\mu)$), then $t \mapsto v_0(t)$ is continuous. By Corollary \ref{cuti}, for every $t \in [0,T)$, 
we have that $v_0 (t) \in \shh$ and
  $$ \lVert P^{\delta}_{T-t}[g] \rVert_{\shh}dt \leq
\left(1 + \dfrac{1}{\sqrt{T-t}}\right)
  \lVert g\rVert_{L^2(d\mu)}.$$
  
Since 
$$\int_0^T \lVert v_0(t) \rVert_\shh dt \leq \lVert g\rVert_{L^2(d\mu)}\int_{0}^{T}\left(1 + \dfrac{1}{\sqrt{T-t}}\right)dt < \infty,
$$
we then observe $v_0 $ belongs to $B$. This proves the statement for $v$
replaced with $v_0$.
                                              
  Concerning $v_1$, for $ t \in [0,T[$,
   the contraction property of $P^{\delta}$ (see Definition \ref{d2})
tells us immediately that $v_1(t) \in L^2(d\mu)$.
 As far as the continuity is concerned, let $t_0, t \in [0,T]$.
Without loss of generality, we can suppose $t > t_0$.

We have
$v_1(t) - v_1(t_0) = -(I_1(t) + I_2(t))$, where
\begin{eqnarray*}
	&I_1(t) = \int_{t_0}^{t}P^{\delta}_{s-t_0}[l(s)]ds \\
	&I_2(t) = \int_{t}^{T}(P^{\delta}_{s-t_0} - P^{\delta}_{s-t})
   [l(s)]ds.
\end{eqnarray*}
As far as $I_1$ is concerned, the contraction property of $P^{\delta}$
gives
\begin{equation*}
  \|I_1(t)\|_{L^2(d\mu)} \leq \int_{t_0}^{t}\|P^{\delta}[l(s)]\|_{L^2(d\mu)}ds \leq \int_{t_0}^{t}\|l(s)\|_{L^2(d\mu)} ds.
\end{equation*}
Then, $\displaystyle \lim_{t\to t_0} \|I_1(t)\|_{L^2(d\mu)} = 0$.

 Concerning $I_2,$ for $s > t$, the semigroup property implies $P^{\delta}_{s-t_0} - P^{\delta}_{s-t} =  P^{\delta}_{s-t}[P^{\delta}_{t-t_0} - I]$.
 By the contraction property

\begin{equation*}
  \|I_2(t)\|_{L^2(d\mu)}\leq \int_{0}^{T}\|(P^{\delta}_{t-t_0} - I)[l(s)]\|_{L^2(d\mu)}ds.
\end{equation*}
Since $(P^\delta_t)$ is strongly continuous (see item (4) in Definition \ref{d2}), then for every $s\in[0,T[,$
we have
$\displaystyle \lim_{t\to t_0}\|(P^{\delta}_{t-t_0} - I)[l(s)]\|_{L^2(d\mu)} = 0$.
Besides, by contraction property, for almost all $s \in [0,T]$,
\[\|(P^{\delta}_{t-t_0} - I)[l(s)]\|_{L^2(d\mu)} \leq 2 \|l(s)\|_{L^2(d\mu)}.
\]
Since $l$ belongs to $L^{1,2}(dtd\mu)$, then we may apply Lebesgue dominated convergence theorem to state $\displaystyle \lim_{t\to t_0}\|I_2(t)\|_{L^2(d\mu)} = 0$.
This concludes the proof of the continuity of $v_1$ on $[0,T]$
with values in $L^2(d\mu).$

It remains to show that $v_1  \in B$.
By Proposition \ref{stabH}, $P^{\delta}_{s-t}[l(s)] \in \shh$.
By a well-known inequality for Bochner integrals, we have
\begin{eqnarray}
  \nonumber \int_0^T \Vert  v_1 (s) \Vert_\shh ds &\le&  
\nonumber                                                        \int_{0}^{T}\int_{t}^{T}\lVert P^{\delta}_{s-t}[l(s)]\rVert_{\shh}dsdt \\ &=&
\nonumber     \int_{0}^{T}\int_{0}^{s}\lVert P^{\delta}_{s-t}[l(s)]\rVert_{\shh}dtds\\ \nonumber
\nonumber                                                  &\leq& \int_{0}^{T}\int_{0}^{s}\left(1+\dfrac{1}{\sqrt{s-t}}\right)\lVert l(s)\rVert_{L^2(d\mu)}dtds \\
\label{EA1} &=& \int_{0}^{T}\lVert l(s)\rVert_{L^2(d\mu)}\int_{0}^{s}\left(1+\dfrac{1}{\sqrt{t}}\right)dtds.
\end{eqnarray}
Also
\[
\int_{0}^{s}\left(1+ \dfrac{1}{\sqrt{t}}\right)dt = s + 2\sqrt{s} \leq T + 2\sqrt{T}.
\]
Since $l \in L^{1,2}(dtd\mu)$, then the right-hand side of \eqref{EA1} is finite and hence $v_1 \in B$. This concludes the proof.
\end{prooff}

In the next section, in order to connect
weak and mild solutions of our backward Kolmogorov-type PDE, we need the following lemma.


\begin{lemma} \label{Linspection}
  Let $g \in L^2(d\mu)$
  and $l \in L^{1,2}(dt d\mu)$.
  Let $v$ be the function defined in \eqref{linop}.
        For every $\phi \in D$, we have 
 \begin{equation}\label{wsnlTer}
    \langle v(t),\phi\rangle_{L^2(d\mu)} = \langle g,\phi\rangle_{L^2(d\mu)} + \int_{t}^{T}
    \langle  v(r), L^\delta \phi\rangle_{L^2(d\mu)}dr + \int_{t}^{T}\langle
    l(r),\phi\rangle_{L^2(d\mu)}dr.
	\end{equation}
      \end{lemma}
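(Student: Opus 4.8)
The plan is to split $v=v_0+v_1$ exactly as in the proof of Proposition~\ref{stabB}, with $v_0(t)=P^\delta_{T-t}[g]$ and $v_1(t)=\int_t^T P^\delta_{s-t}[l(s)]\,ds$, and to establish \eqref{wsnlTer} separately for each summand; adding the two identities then gives \eqref{wsnlTer} for $v$.

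The elementary building block is \eqref{EPdelta}, which I would first extend from $f\in D$ to an arbitrary $f\in L^2(d\mu)$. Indeed, by Proposition~\ref{eqH}~(2) there is a sequence $(f_n)\subset D$ with $f_n\to f$ in $L^2(d\mu)$; using the contraction property of $P^\delta_u$ (Proposition~\ref{c1}) and Cauchy--Schwarz one passes to the limit in every term of \eqref{EPdelta} (dominated convergence handles the time integral, its integrand being bounded by $\|L^\delta\phi\|_{L^2(d\mu)}\sup_n\|f_n\|_{L^2(d\mu)}$). This yields, for every $f\in L^2(d\mu)$, $\phi\in D$ and $\tau\ge 0$,
\[
\langle P^\delta_\tau[f],\phi\rangle_{L^2(d\mu)}=\langle f,\phi\rangle_{L^2(d\mu)}+\int_0^\tau\langle P^\delta_u[f],L^\delta\phi\rangle_{L^2(d\mu)}\,du. \qquad (\star)
\]

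For the $v_0$-part, apply $(\star)$ with $f=g$ and $\tau=T-t$ and perform the substitution $r=T-u$; this gives $\langle v_0(t),\phi\rangle_{L^2(d\mu)}=\langle g,\phi\rangle_{L^2(d\mu)}+\int_t^T\langle v_0(r),L^\delta\phi\rangle_{L^2(d\mu)}\,dr$, i.e. \eqref{wsnlTer} with $l\equiv 0$. For the $v_1$-part, note that $v_1(t)$ is a Bochner integral in $L^2(d\mu)$ (its integrand has $L^2(d\mu)$-norm $\le\|l(s)\|_{L^2(d\mu)}$, and $l\in L^{1,2}(dt\,d\mu)$), so the continuous functional $\langle\cdot,\phi\rangle_{L^2(d\mu)}$ commutes with it: $\langle v_1(t),\phi\rangle_{L^2(d\mu)}=\int_t^T\langle P^\delta_{s-t}[l(s)],\phi\rangle_{L^2(d\mu)}\,ds$. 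Applying $(\star)$ with $f=l(s)$, $\tau=s-t$ to the integrand gives
\[
\langle v_1(t),\phi\rangle_{L^2(d\mu)}=\int_t^T\langle l(s),\phi\rangle_{L^2(d\mu)}\,ds+\int_t^T\!\!\int_0^{s-t}\langle P^\delta_u[l(s)],L^\delta\phi\rangle_{L^2(d\mu)}\,du\,ds ;
\]
the first term is the last summand of \eqref{wsnlTer}. For the double integral I would substitute $u=s-r$ in the inner integral and then use Fubini to swap the $s$- and $r$-integrations (the region $\{t\le s\le T,\ 0\le u\le s-t\}$ becomes $\{t\le r\le T,\ r\le s\le T\}$); Fubini is legitimate because $|\langle P^\delta_u[l(s)],L^\delta\phi\rangle_{L^2(d\mu)}|\le\|l(s)\|_{L^2(d\mu)}\|L^\delta\phi\|_{L^2(d\mu)}$ with $l\in L^{1,2}(dt\,d\mu)$. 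One arrives at $\int_t^T\big(\int_r^T\langle P^\delta_{s-r}[l(s)],L^\delta\phi\rangle_{L^2(d\mu)}\,ds\big)\,dr=\int_t^T\langle v_1(r),L^\delta\phi\rangle_{L^2(d\mu)}\,dr$, using once more that the continuous functional $\langle\cdot,L^\delta\phi\rangle_{L^2(d\mu)}$ commutes with the Bochner integral defining $v_1(r)$. Summing the $v_0$- and $v_1$-identities yields \eqref{wsnlTer}.

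The proof involves no new analytic ingredient beyond the contraction and symmetry of $P^\delta$ (Propositions~\ref{c1}, \ref{wellP}), the identity \eqref{Eequation}/\eqref{EPdelta}, and the density of $D$ (Proposition~\ref{eqH}); the only real care needed is bookkeeping --- correctly tracking the three changes of variables ($r=T-u$ for $v_0$, then $u=s-r$ together with the Fubini swap for $v_1$) and checking the (easy) integrability hypotheses for Fubini and dominated convergence.
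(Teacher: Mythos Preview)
Your proof is correct. The overall architecture matches the paper's: both split $v=v_0+v_1$ and treat the terminal-data piece and the source piece separately, both rely on the symmetry/contraction of $P^\delta$ and on Fubini. The one genuine difference is that you base everything on the single identity $(\star)$, obtained by extending \eqref{EPdelta} from $f\in D$ to $f\in L^2(d\mu)$, and apply it uniformly to $f=g$ and $f=l(s)$; the paper instead handles $v_0$ by first taking $g\in D$, invoking the Friedrichs-extension machinery (Remark~\ref{r4} and $\partial_s v=-L^\delta_F v$), and only then passing to general $g$ by density, while for $v_1$ it moves $P^\delta$ onto $\phi$ via symmetry and expands $P^\delta_{r-t}[\phi]$ through Remark~\ref{r4}. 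Your route is slightly more elementary in that it never needs $L^\delta_F$ or Corollary~\ref{equivP}; the paper's route makes the semigroup-generator relation more visible. Both lead to the same computation.
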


      \begin{proof}

By linearity we can reduce the problem to two
    separate cases: when $l \equiv 0$ and when $g\equiv 0$.
    \begin{enumerate}
      \item
    Suppose first that $l \equiv 0$.
    Suppose first $g \in D$. By Remark \ref{r4} $v(t) :=  P^{\delta}_{T-t}[g]$
    belongs to $dom(-L^\delta_F)$ for every $t \in [0,T]$
    and
\begin{equation} \label{eq:hom}
  \partial_s v(s) = - L^\delta_F v(s), s \in [0,T].
  \end{equation}
  Moreover
  $ -L_F^\delta v(s) = 
  P^{\delta}_{T-s}[-L_F^{\delta} g], \ s \in [0,T].$
    Consequently, since $- L^{\delta} [g] \in L^2(d\mu)$, Proposition \ref{stabB},
   implies that $t \mapsto -L^{\delta}_F v(t)$ is  continuous
    with values in $L^2(d\mu)$.
Integrating \eqref{eq:hom}, from a generic $t \in [0,T[$ to $T$, we get
$$ g - v(t) = - \int_t^T L^\delta_F v(s) ds.$$
Taking the inner product of previous equality with $\phi \in D$, using the fact that $L^\delta_F$ is symmetric
and $L^\delta_F$ extends $L^\delta$, we obtain \eqref{wsnlTer}.
By the contraction property, one can easily show that $g\mapsto P^{\delta}_{T-t}[g]$ is linear and continuous from
$ L^2(d \mu)$ to $L^{1,2}(dt d\mu)$.
Consequently, since $D$ is dense in $L^2(d\mu)$, then the equality \eqref{wsnlTer} extends
to every $g \in L^2(d\mu)$. 

\item
The next step consists in fixing $g \equiv 0$ and let $l$ be a 
generic element in $L^{1,2}(dt d\mu)$. In this case,
\eqref{linop} is given by
 \begin{equation} \label{eq:vt_ell}
    v(t) =  \int_t^T P^{\delta}_{r-t} [l(r)] dr, \ t \in [0,T].
    \end{equation}
    We define $w:[0,T] \rightarrow L^2(d\mu)$ by
    
    $$ w(t) := v(t) - \int_t^T l(r) dr, \ t \in [0,T].$$
    Let $\phi \in D$. We need to show
    \begin{equation}\label{wsnlQuater}
    \langle w(t),\phi\rangle_{L^2(d\mu)} =   \int_{t}^{T}
    \langle  v(r), L^\delta \phi\rangle_{L^2(d\mu)}dr.
  \end{equation}
  
 By Proposition \ref{c1}, $P^{\delta}$ is symmetric and then we have 
  \begin{eqnarray}
\nonumber  	\langle w(t),\phi\rangle_{L^2(d\mu)} &=& \left\langle \int_{t}^{T}P^{\delta}_{r-t}[l(r)]dr,\phi\right\rangle_{L^2(d\mu)} - \int_{t}^{T}\left\langle l(r),\phi\right\rangle_{L^2(d\mu)}dr \\
\label{E1} &=& \int_{t}^{T}\left\langle l(r),P^{\delta}_{r-t}[\phi]\right\rangle_{L^2(d\mu)}dr - \int_{t}^{T}\left\langle l(r),\phi\right\rangle_{L^2(d\mu)}dr.
  \end{eqnarray}
  
By Remark \ref{r4}, $P^{\delta}_{r-t}[\phi] \in dom(L^{\delta}_F)$ and 
$$P^{\delta}_{r-t}[\phi] = \phi + \int_{t}^{r}P^{\delta}_{r-s}[L^{\delta}_F\phi]ds.$$
Consequently, by using \eqref{eq:vt_ell} and \eqref{E1}, we have
  \begin{eqnarray*}
  	\langle w(t), \phi\rangle_{L^2(d\mu)} &=&  \int_{t}^{T}\langle l(r), \phi\rangle_{L^2(d\mu)}dr + \int_{t}^{T}\left\langle l(r),\int_{t}^{r}P^{\delta}_{r-s}[L^{\delta}_F\phi]ds\right\rangle_{L^2(d\mu)}dr\\
  	&-&\int_{t}^{T}\langle l(r),\phi\rangle_{L^2(d\mu)}dr \\
  	&=&  \int_{t}^{T}\int_{t}^{r}\langle  l(r),P^{\delta}_{r-s}[L^{\delta}_F\phi]\rangle_{L^2(d\mu)}dsdr = \int_{t}^{T}\int_{s}^{T}\langle l(r),P^{\delta}_{r-s}[L^{\delta}_F\phi] \rangle_{L^2(d\mu)}drds \\
  &=&	\int_{t}^{T}\int_{s}^{T}\langle P^{\delta}_{r-s}[l(r)],L^{\delta}_F\phi\rangle_{L^2(d\mu)}drds =
     \int_{t}^{T}\langle v(s),L^{\delta}_F\phi\rangle_{L^2(d\mu)}ds.
  \end{eqnarray*}
Since $L^{\delta}$ is a restriction of $L^{\delta}_F$, then \eqref{wsnlQuater} follows. This concludes the proof.   

\end{enumerate}
\end{proof}
      Lemma \ref{Linspection} shows in particular that $v$ defined in \eqref{linop}
is a weak solution of \eqref{pde02Lin} in the sense of Definition \ref{D42}.


      \section{The non-linear pde}
      \label{NLPDE}


      \medskip
      
Let $f:[0,T]\times\R_+\times \R\times\R\to \R$, $g\in L^2(d\mu)$ and a constant $C > 0$ such that

\begin{equation} \label{Lgrowth}
  \vert f(t,x,u,v)\vert \le
   \vert f_0(t,x) \vert + C (\vert u \vert + \vert v \vert), \ t \in [0,T], x \ge 0, u,v \in \R,
  \end{equation}
  where $f_0 \in L^{1,2}(dtd\mu).$
 
  
We will consider the PDE
 \begin{equation}\label{pde02}
	(\partial_t + L^{\delta})u + f(\cdot, \cdot, u, \partial_xu) = 0, \ \ u(T)\equiv g.
      \end{equation}

\begin{definition} \label{D41}
  $u$ is said to be
  a \textbf{classical solution} of \eqref{pde02} if it is of class
  $C^{1,2}([0,T] \times \R_+,\R)$ (which induces $g = u(T) \in C^2(\R_+,\R)$) such that
  $\partial_xu(\cdot,0) \equiv 0$
  and it satisfies \eqref{pde02} in the strict sense.
\end{definition}
In previous definition, we remark that, for every $t \in [0,T],$
$u(t) \in \shd_{L^\delta}(\R_+)$
which was defined in \eqref{DL2R+}.

Let $u:[0,T] \rightarrow L^2(d\mu)$ be a Bochner integrable
function such that $u(t) \in \shh$ for almost all $t \in [0,T[$.
We also suppose that
\begin{equation} \label{EL2}
  l: r \mapsto f(r,x,u(r),\partial_x u(r))) \in L^{1,2}(dt d\mu).
  \end{equation}
\begin{remark} \label{R42}
  If
  $u \in L^1([0,T];\shh)$, then
  \eqref{EL2} holds true.
  
  \end{remark}
      \begin{definition} \label{D42}
  	We say that $u: [0,T] \times \R_+ \rightarrow  \R$
        is a \textbf{weak solution} of \eqref{pde02} if for all $\phi \in D$
and $t \in [0,T]$
  \begin{equation}\label{wsnl}
    \langle u(t),\phi\rangle_{L^2(d\mu)} = \langle g,\phi\rangle_{L^2(d\mu)} + \int_{t}^{T}\langle u(s),L^{\delta}\phi\rangle_{L^2(d\mu)}ds + \int_{t}^{T}\langle
    f(s,\cdot, u(s),\partial_xu(s)),\phi\rangle_{L^2(d\mu)}ds.
	\end{equation}
      \end{definition}
      In fact the notion of weak solution
can be even
      be formulated under the more general assumption that 
      $(t,x) \mapsto f(r,x,u(r,x),\partial_x u(r,x)))$  belongs to
      $ L^1 ([0,T]; L^2_{\rm loc}(\R_+))$.
      In fact the test functions in $D$ have compact support.
\begin{definition} \label{D54}
We say that $u: [0,T] \times \R_+ \rightarrow  \R$ is a \textbf{mild solution} of \eqref{pde02}
  if
	\begin{equation}\label{mils}
          u(t) = P^{\delta}_{T-t}[g] + \int_{t}^{T}P^{\delta}_{s-t}[f(s,\cdot, u(s,\cdot),\partial_xu(s,\cdot))]ds, \ t \in \ [0,T],
          	\end{equation}
\end{definition}
where the equality (\ref{mils}) holds in $L^2(d\mu)$.
\begin{remark} \label{Rmils}
  By \eqref{EL2}, \eqref{linop}
  and Proposition \ref{stabB},
the right-hand side of  \eqref{mils} is well-defined.
\end{remark}
  
As expected, we now present the following result. 
\begin{proposition} \label{P45}
  Let $u$ be a classical solution of \eqref{pde02}
  such that such that $u \in L^{1,2}(dtd\mu)$ and $u(t) \in \shh$ for almost all $t$.
  We also suppose \eqref{EL2}.
  Then $u$
  is also a weak solution of (\ref{pde02}).
\end{proposition}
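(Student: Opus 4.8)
The plan is to verify directly that a classical solution satisfies the weak formulation \eqref{wsnl} by testing the PDE \eqref{pde02} against a test function $\phi \in D$ with respect to the measure $\mu$ and integrating over a time interval $[t,T]$. First I would fix $\phi \in D$ and $t \in [0,T]$. Since $u$ is a classical solution, for each fixed $x$ the map $s \mapsto u(s,x)$ is $C^1$ and satisfies $\partial_s u(s,x) + L^\delta u(s,x) + f(s,x,u(s,x),\partial_x u(s,x)) = 0$ pointwise on $[0,T] \times \R_+$. Multiplying by $\phi(x)$, integrating in $x$ against $\mu(dx)$, and then integrating in $s$ from $t$ to $T$, the fundamental theorem of calculus applied to $s \mapsto \langle u(s),\phi\rangle_{L^2(d\mu)}$ gives
\begin{equation*}
  \langle g,\phi\rangle_{L^2(d\mu)} - \langle u(t),\phi\rangle_{L^2(d\mu)}
  + \int_t^T \langle L^\delta u(s),\phi\rangle_{L^2(d\mu)}\, ds
  + \int_t^T \langle f(s,\cdot,u(s),\partial_x u(s)),\phi\rangle_{L^2(d\mu)}\, ds = 0.
\end{equation*}
The only nontrivial point is to replace $\langle L^\delta u(s),\phi\rangle_{L^2(d\mu)}$ by $\langle u(s), L^\delta \phi\rangle_{L^2(d\mu)}$, i.e. to move the operator onto the test function. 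Since $u(s) \in \shd_{L^\delta}(\R_+)$ (because $\partial_x u(s,0) = 0$ and $u(s) \in C^2(\R_+,\R)$) and $\phi \in D$ has compact support with $\phi'(0) = 0$, I would use the self-adjointness-type identity of Proposition \ref{p2}: using Remark \ref{r1}(2), $L^\delta u(s)(x) = \tfrac12 id^{1-\delta}(id^{\delta-1}(\partial_x u(s))')'(x)$, and two integrations by parts over $\R_+$ with respect to $dx$, together with the vanishing of the boundary terms at $0$ (guaranteed by $\phi'(0)=0$ and $\partial_x u(s,0)=0$, together with the weight $id^{\delta-1}$ controlling the behaviour at zero) and at infinity (guaranteed by the compact support of $\phi$), yield $\langle L^\delta u(s),\phi\rangle_{L^2(d\mu)} = -\tfrac12\langle (\partial_x u(s))',\phi'\rangle_{L^2(d\mu)} = \langle u(s), L^\delta\phi\rangle_{L^2(d\mu)}$.

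I should also justify the interchanges of integration and differentiation: the function $s \mapsto \langle u(s),\phi\rangle_{L^2(d\mu)}$ is differentiable with derivative $\langle \partial_s u(s),\phi\rangle_{L^2(d\mu)}$ because $u$ is $C^{1,2}$ and $\phi$ has compact support, so all the integrands are continuous in $s$ on the compact support of $\phi$ and dominated convergence applies on the bounded set $\mathrm{supp}\,\phi$; the hypotheses $u \in L^{1,2}(dt\,d\mu)$, $u(t)\in\shh$ for a.e.\ $t$ and \eqref{EL2} guarantee that the terms $\langle u(s),L^\delta\phi\rangle_{L^2(d\mu)}$ and $\langle f(s,\cdot,u(s),\partial_x u(s)),\phi\rangle_{L^2(d\mu)}$ are integrable in $s$ on $[t,T]$, so \eqref{wsnl} is meaningful. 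Reorganizing the displayed equation above then gives exactly \eqref{wsnl}.

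The main obstacle I anticipate is the integration by parts step, specifically checking that the boundary term at $x = 0$ vanishes. The measure $\mu(dx) = x^{\delta-1}dx$ is singular at $0$ (since $0 < \delta < 1$), and although $\phi$ has compact support away from infinity, one must verify that $x^{\delta-1}(\partial_x u(s))'(x)\phi(x)$ and $x^{\delta-1}(\partial_x u(s))(x)\phi'(x)$ (or the corresponding expressions from Remark \ref{r1}(2)) tend to $0$ as $x \downarrow 0$; this is where the condition $\partial_x u(s,0) = 0$ from the definition of a classical solution, the condition $\phi'(0) = 0$ from the definition of $D$, and the $C^2$ regularity of $u(s)$ up to $0$ are all essential. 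In practice this is the same computation already carried out in the proof of Proposition \ref{p2}, only with the second argument $f$ there being a generic element of $\shd_{L^\delta}(\R_+)$ rather than of $D$; since the first argument $\phi\in D$ still has compact support, the argument goes through unchanged, so I would simply invoke that computation.
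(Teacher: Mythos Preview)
Your proposal is correct and follows the same overall strategy as the paper: integrate the PDE from $t$ to $T$, pair with $\phi \in D$ against $\mu$, and then transfer $L^\delta$ from $u(s)$ onto $\phi$. The only difference lies in how the transfer identity $\int_{\R_+} L^\delta u(s)\,\phi\,d\mu = \langle u(s), L^\delta \phi\rangle_{L^2(d\mu)}$ is justified. You argue by a direct integration by parts, checking that the boundary contribution at $x=0$ vanishes because $\partial_x u(s,0)=0$ and $\phi'(0)=0$ force $x^{\delta-1}\partial_x u(s,x)\phi(x)=O(x^\delta)\to 0$ and similarly for the other term. The paper instead approximates $\ell:=u(s) \in \shd_{L^\delta}(\R_+)$ by $\ell_n:=\ell\,\chi_n \in D$ via the cutoffs of Remark~\ref{rem:SMP}(1), invokes the already-established symmetry of $L^\delta$ on $D$ (Proposition~\ref{p2}), and passes to the limit using that $\ell_n,\ell_n',L^\delta\ell_n$ converge uniformly on compacts together with the compact support of $\phi$. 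Both routes are short; yours is more hands-on, while the paper's avoids revisiting the boundary analysis by recycling Proposition~\ref{p2}. Two small points: in your intermediate display the expression should read $-\tfrac12\langle \partial_x u(s),\phi'\rangle_{L^2(d\mu)}$ (one derivative, not two); and since $L^\delta u(s)$ need not belong to $L^2(d\mu)$, the pairing you write as $\langle L^\delta u(s),\phi\rangle_{L^2(d\mu)}$ is really the integral $\int_{\R_+} L^\delta u(s)\,\phi\,d\mu$, finite because $L^\delta u(s)$ is continuous and $\phi$ has compact support --- the paper makes this distinction explicit.
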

\begin{proof} 

  Let
  $t\in [0,T]$.
  Since $u(t)\in \shd_{L^{\delta}}(\R_+)$ then $L^{\delta}u(t) \in C(\R_+)$.
 However  $u(t)$ does not necessarily belong to $D$
 so that $L^\delta u(t)$  does not necessarily belong to $L^2(d\mu)$.
 Integrating in time both sides of \eqref{pde02},  $u$
  fulfills
  $$ u(t,x) = g(x) + \int_t^T  (L^\delta u(r,x) dr + f(r,x,u(r,x),\partial_x u(r,x))) dr, \ \forall (t,x) \in [0,T] \times \R_+.$$
By integrating against $\phi \in D$ with respect to $\mu(dx)$, we get 
  \begin{eqnarray}
\nonumber    \langle u(t),\phi\rangle_{L^2(d\mu)} &=& \langle g,\phi\rangle_{L^2(d\mu)} + \int_{t}^{T}
     \int_{\R_+}L^{\delta} u(s,x) \phi(x) d\mu(x) ds\\
\label{wsnlBis}& +& \int_{t}^{T}\langle f(s,u(s),\partial_xu(s)),\phi\rangle_{L^2(d\mu)}ds.
   \end{eqnarray}
  We remark that, by \eqref{op},
$(s,x)\mapsto L^\delta u(s,x)$ is locally bounded, so
   $\int_t^T \int_K \vert L^\delta u(s,x) \vert d\mu(x) ds < \infty,$
  for every compact $K$ of $\R_+$.

        The result follows if we show that
\begin{equation} \label{eq:WeakDuality}
  \int_{\R_+}  L^{\delta} \ell (x) \phi(x) d\mu(x) = \langle \ell,  L^{\delta} \phi\rangle_{L^2(d\mu)},
  \end{equation}
for every $ \ell \in \shd_{L^\delta}(\R_+)$.
Now \eqref{eq:WeakDuality} holds of course if $\ell \in D$
because  $L^\delta_F$ is  symmetric
and $L^\delta$ is a restriction of $L^\delta_F$.

Let us suppose now
$\ell \in \shd_{L^\delta}(\R_+)$.
By Remark \ref{rem:SMP} (1), there is a sequence $(\ell_n)$ in $D$ such that
$\ell_n, \ell'_n, L^\delta \ell_n$ can be shown to  converge
respectively to $\ell, \ell', L^\delta \ell$ uniformly on compact intervals.
Since \eqref{eq:WeakDuality} holds for $\ell$ replaced by $\ell_n$,
finally we get \eqref{eq:WeakDuality} also for $\ell$. This concludes the proof. 
  \end{proof}
		

\begin{proposition}[Uniqueness for the homogeneous PDE]\label{unihomo}
	\
        The vanishing function $u \equiv 0$
        is the unique weak solution (in the sense of Definition \ref{D42}) of the  homogeneous PDE
	\begin{equation}\label{homopde}
		\left\{
		\begin{array}{l}
                  (\partialt + L^{\delta}) u = 0,\\
			u(T,x) = 0.
		\end{array}
		\right.
	\end{equation}
\end{proposition}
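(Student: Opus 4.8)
The plan is to reduce the uniqueness statement for the homogeneous PDE \eqref{homopde} to an application of Lemma \ref{luni1}, namely to the case \eqref{inter1Bis}. Suppose $u$ is a weak solution of \eqref{homopde} in the sense of Definition \ref{D42}, with $g \equiv 0$ and $f \equiv 0$. Then \eqref{wsnl} becomes, for every $\phi \in D$ and $t \in [0,T]$,
\begin{equation*}
  \langle u(t),\phi\rangle_{L^2(d\mu)} = \int_{t}^{T}\langle u(s),L^{\delta}\phi\rangle_{L^2(d\mu)}ds.
\end{equation*}
The first step is to observe that this is exactly relation \eqref{inter1Bis} with $w = u$; hence, provided $u$ lies in the space $L^{1,2}(dt d\mu)$ required by Lemma \ref{luni1}, the lemma immediately gives $u \equiv 0$.

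The point that requires a small argument is therefore the membership $u \in L^{1,2}(dt d\mu)$. For a weak solution in the sense of Definition \ref{D42}, one assumes (see the discussion before Definition \ref{D42}) that $u:[0,T]\rightarrow L^2(d\mu)$ is Bochner integrable and that $u(t)\in\shh$ for almost all $t$; in other words $u \in B = L^1([0,T];\shh)$, and since $\Vert u(t)\Vert_{L^2(d\mu)} \le \Vert u(t)\Vert_{\shh}$ we get $u \in L^1([0,T];L^2(d\mu)) = L^{1,2}(dt d\mu)$. Thus the hypothesis of Lemma \ref{luni1} is met. I would state this explicitly so that the reduction is clean.

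The main (and essentially only) obstacle is purely bookkeeping: making sure the notion of weak solution being used here is precisely the one for which $u$ has enough regularity to invoke Lemma \ref{luni1}, i.e. that $u$ is a genuine element of $L^{1,2}(dt d\mu)$ and not merely a family of distributions. Once that is in place, there is nothing more to prove — the whole content is carried by Lemma \ref{luni1}, whose proof (via the dual backward equation $\partial_s\Phi_t(s) + L^\delta_F \Phi_t(s) = 0$ with $\Phi_t(s) = P_{t-s}[g]$, $g \in D$, and density of $D$ in $L^2(d\mu)$) already handles the substantive analysis. So the proof I would write is essentially two sentences: check $u \in L^{1,2}(dt d\mu)$, then apply Lemma \ref{luni1} in the form \eqref{inter1Bis} to conclude $u \equiv 0$.
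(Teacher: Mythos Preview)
Your proposal is correct and follows exactly the paper's own argument: observe that a weak solution of \eqref{homopde} satisfies \eqref{inter1Bis} and invoke Lemma \ref{luni1}. One small imprecision: the standing assumptions before Definition \ref{D42} give directly that $u$ is Bochner integrable into $L^2(d\mu)$, i.e.\ $u\in L^{1,2}(dt\,d\mu)$, which is all Lemma \ref{luni1} needs; they do not quite say $u\in B=L^1([0,T];\shh)$, so your ``in other words'' is slightly off, but this does not affect the argument.
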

\begin{proof} \
  Let $w$ be a weak solution of \eqref{homopde}.
  By definition, for every $f \in D$, we have
  \eqref{inter1Bis}
  and then the result follows by
  Lemma \ref{luni1}.
  
\end{proof}

\begin{proposition} \label{WeakMild}
  Let $u:[0,T] \rightarrow L^2(d\mu)$ such that
  $u(t) \in \shh$ for almost all $t \in [0,T]$.
 We also suppose that that $r \mapsto f(r,x,u(r),\partial_x u(r)))$  belongs to $L^{1,2}(dt d\mu)$.
  Then $u$ is a weak solution of  \eqref{pde02} if and only if
	it is a mild solution.
      \end{proposition}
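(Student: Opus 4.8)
The plan is to deduce this equivalence from two results already in hand: Lemma \ref{Linspection}, which shows that the function $v$ of \eqref{linop} satisfies the weak formulation of the \emph{linear} PDE with source $l$ and terminal datum $g$, and Proposition \ref{unihomo} (more precisely Lemma \ref{luni1}), the uniqueness statement for the homogeneous problem. Throughout, given a candidate solution $u$ as in the statement, I freeze the nonlinearity by setting $l(s):= f(s,\cdot,u(s),\partial_x u(s))$, which by hypothesis belongs to $L^{1,2}(dt d\mu)$. With this choice the right-hand side of \eqref{mils} is exactly the function $v$ defined in \eqref{linop}, and by Remark \ref{Rmils} (equivalently Proposition \ref{stabB}) it is a well-defined element of $B\subset L^{1,2}(dt d\mu)$ that is continuous into $L^2(d\mu)$.

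First I would treat the implication ``mild $\Rightarrow$ weak''. If $u$ is a mild solution then, by definition, $u=v$ with $v$ as in \eqref{linop} for the above $l$ and $g$. Applying Lemma \ref{Linspection} to this $v$, $g$, $l$ gives precisely \eqref{wsnlTer}, namely
\[
\langle u(t),\phi\rangle_{L^2(d\mu)} = \langle g,\phi\rangle_{L^2(d\mu)} + \int_{t}^{T}\langle u(r), L^\delta \phi\rangle_{L^2(d\mu)}dr + \int_{t}^{T}\langle l(r),\phi\rangle_{L^2(d\mu)}dr
\]
for all $\phi\in D$ and $t\in[0,T]$. Since $l(r)=f(r,\cdot,u(r),\partial_x u(r))$, this is exactly \eqref{wsnl}, so $u$ is a weak solution.

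Conversely, for ``weak $\Rightarrow$ mild'', suppose $u$ is a weak solution, keep $l(s)=f(s,\cdot,u(s),\partial_x u(s))$ and let $v$ be the corresponding function \eqref{linop}. By Lemma \ref{Linspection}, $v$ satisfies \eqref{wsnlTer} for this same pair $(g,l)$; on the other hand \eqref{wsnl} says that $u$ satisfies the identical relation, with the very same $l$ (the argument of $f$ being frozen at $u$). Subtracting the two identities, $w:=u-v$ — which lies in $L^{1,2}(dt d\mu)$, since $u$ does by assumption and $v\in B$ — satisfies
\[
\langle w(t),\phi\rangle_{L^2(d\mu)} = \int_{t}^{T}\langle w(s), L^\delta\phi\rangle_{L^2(d\mu)}ds, \qquad t\in[0,T],\ \phi\in D,
\]
which is relation \eqref{inter1Bis}. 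Hence Lemma \ref{luni1} (equivalently, Proposition \ref{unihomo}) yields $w\equiv 0$, i.e. $u=v$, so $u$ satisfies \eqref{mils} and is a mild solution.

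I do not expect a genuine obstacle here: both implications are short once $l$ is frozen, and all the analytic content — well-posedness, continuity and integrability of $v$, the duality identity for $v$, and uniqueness for the homogeneous equation — has already been established in Proposition \ref{stabB}, Lemma \ref{Linspection} and Lemma \ref{luni1}. The only point demanding a little care is to keep the argument of $f$ frozen at the given $u$ throughout, so that $u$ and $v$ solve the \emph{same} linear problem and their difference is a bona fide weak solution of the homogeneous PDE with zero terminal datum.
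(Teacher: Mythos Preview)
Your proposal is correct and follows essentially the same approach as the paper: freeze the nonlinearity as $l(s)=f(s,\cdot,u(s),\partial_x u(s))$, use Lemma \ref{Linspection} for the ``mild $\Rightarrow$ weak'' direction, and for ``weak $\Rightarrow$ mild'' define $v$ via \eqref{linop}, apply Lemma \ref{Linspection} to $v$, and invoke Proposition \ref{unihomo} (equivalently Lemma \ref{luni1}) on the difference $u-v$. Your write-up is in fact slightly more explicit than the paper's in checking that $w=u-v\in L^{1,2}(dt\,d\mu)$.
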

\begin{proof}
  If $u$ is a mild solution, setting
  $$ l(s) = f(s,\cdot, u(s), \partial_xu(s)), \ s \in [0,T],
  $$
  Lemma \ref{Linspection} implies that it is also a weak solution.

	Suppose that $u$ is a weak solution. We set
	\begin{equation}
          v(t,\cdot) := P^\delta_{T-t}[g] + \int_{t}^{T}P^\delta_{s-t}[f(s,\cdot,u(s,\cdot),\partial_xu(s,\cdot))]ds.
  	\end{equation} 
        Applying again Lemma \ref{Linspection} we see that $v$ is also a weak solution of \eqref{pde02}. So, by linearity $u-v$ is a weak solution of \eqref{homopde}. By Proposition \ref{unihomo} $u = v$.
\end{proof}

We introduce now the solution map $A$ related to the PDE \eqref{pde02}
in the sense of mild solutions.
In particular, to $u$ belonging to $B$
we associate $Au$ defined by the right-hand side of
\eqref{mils}, i.e.
\begin{equation} \label{eqOpA}
Au(t):=  P^{\delta}_{T-t}[g] + \int_{t}^{T}P^{\delta}_{s-t}[f(s,\cdot, u(s,\cdot),\partial_xu(s,\cdot))]ds, \ t \in \ [0,T],
\end{equation}

By Remark \ref{R42}, \eqref{EL2} is fulfilled 
and hence Remark \ref{Rmils} allows to state that $Au$ is well-defined.
We also have the following.

\begin{proposition}\label{stabBNonLin}
  For each $u \in B$, we have $Au \in B$.
   Moreover, for every $t \in [0,T]$, 
   $Au(t) \in L^2(\mu)$
   and $t\mapsto Au(t)$ is continuous from $[0,T]$ to $L^2(d\mu)$.
\end{proposition}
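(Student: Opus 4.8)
The plan is to write $Au = A_0 u + A_1 u$ with $A_0 u(t) := P^\delta_{T-t}[g]$ and $A_1 u(t) := \int_t^T P^\delta_{s-t}[l(s)]\,ds$, where $l(s) := f(s,\cdot,u(s),\partial_x u(s))$, and to verify each of the three assertions (membership in $L^2(d\mu)$ pointwise in $t$, continuity in $t$ with values in $L^2(d\mu)$, and membership in $B = L^1([0,T];\shh)$) for $A_0 u$ and $A_1 u$ separately. The point is that $Au$ is exactly the function $v$ of \eqref{linop} associated with the source term $l$, so once I know $l \in L^{1,2}(dt\,d\mu)$ the entire statement is an immediate consequence of Proposition \ref{stabB}.

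**Reduction to Proposition \ref{stabB}.** First I would check that for $u \in B$ one has $l \in L^{1,2}(dt\,d\mu)$. This is Remark \ref{R42} applied to our $u$: since $u \in L^1([0,T];\shh)$, in particular $u(t) \in \shh$ for a.e.\ $t$, so $\partial_x u(t)$ makes sense and $\|u(t)\|_{L^2(d\mu)}, \|\partial_x u(t)\|_{L^2(d\mu)} \le \sqrt{2}\,\|u(t)\|_\shh$. By the growth bound \eqref{Lgrowth},
\begin{equation*}
\|l(s)\|_{L^2(d\mu)} \le \|f_0(s,\cdot)\|_{L^2(d\mu)} + C\big(\|u(s)\|_{L^2(d\mu)} + \|\partial_x u(s)\|_{L^2(d\mu)}\big) \le \|f_0(s,\cdot)\|_{L^2(d\mu)} + 2C\sqrt{2}\,\|u(s)\|_\shh,
\end{equation*}
and integrating over $[0,T]$ the right-hand side is finite because $f_0 \in L^{1,2}(dt\,d\mu)$ and $u \in B$; hence $l \in L^{1,2}(dt\,d\mu)$. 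Measurability of $s \mapsto l(s)$ with values in $L^2(d\mu)$ follows from the Carath\'eodory-type structure of $f$ (Borel in all variables, Lipschitz in the last two) together with the strong measurability of $s \mapsto u(s)$ and $s \mapsto \partial_x u(s)$.

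**Conclusion.** With $l \in L^{1,2}(dt\,d\mu)$ and $g \in L^2(d\mu)$ in hand, the function
\begin{equation*}
v(t) = P^\delta_{T-t}[g] + \int_t^T P^\delta_{s-t}[l(s)]\,ds = Au(t)
\end{equation*}
is precisely the object treated in \eqref{linop}, so Proposition \ref{stabB} gives directly that $Au(t) \in L^2(d\mu)$ for every $t \in [0,T]$, that $t \mapsto Au(t)$ is continuous with values in $L^2(d\mu)$, and that $Au \in B$. This finishes the proof.

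**Main obstacle.** There is no genuine analytic difficulty here—Proposition \ref{stabB} does all the work, and the only thing to be careful about is the bookkeeping needed to certify that the composed map $s \mapsto f(s,\cdot,u(s),\partial_x u(s))$ is a strongly measurable $L^2(d\mu)$-valued function and satisfies the integrability bound, i.e.\ verifying the hypotheses of Proposition \ref{stabB} rather than proving anything new. Equivalently, one may simply invoke Remark \ref{R42} for the integrability and Remark \ref{Rmils} for well-definedness, and the statement is then a corollary of the linear result.
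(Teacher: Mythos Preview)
Your proposal is correct and follows exactly the same approach as the paper: the paper's proof consists of the single sentence ``The result follows by Proposition \ref{stabB} setting $l(s) = f(s,\cdot,u(s), u'(s))$,'' which is precisely your reduction. Your version simply spells out the verification that $l \in L^{1,2}(dt\,d\mu)$ (the content of Remark \ref{R42}) in more detail than the paper does.
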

\begin{proof}

  The result follows by  Proposition \ref{stabB}
  setting 
  $$ l(s) = f(s,\cdot,u(s), u'(s)), s \in [0,T]. $$
 
\end{proof}

Next statement 
concerns a tool for establishing that map $A$ defined in \eqref{eqOpA}
admits a unique fixed point.

\begin{proposition}\label{contracM} Let $S$ be a generic set
 and $M: S \rightarrow S$ be a generic application.

  Suppose that for some integer $n_0 > 1$, $M^{n_0}$ has a unique fixed point $u\in S$. Then $u$ is also the unique fixed point of $M$.
      \end{proposition}
      \begin{proof}

We have that $M^{n_0}(Mu) =M(M^{n_0}u) = Mu$. Since $u$ is the unique fixed point of $M^{n_0}$ then $Mu = u,$ which proves that $u$ is a fixed point of $M$.

We show now uniqueness.
Suppose that there is another fixed point $v \in S$ of $M$.
Then, applying iteratively $n_0 -1$  times we get $M^{n_0}v = v$
and $M^{n_0}u = u$. Since  $M^{n_0}u$ admits a unique fixed point
then necessarily $u =v$.
\end{proof}

We introduce now a reinforced hypothesis on $f$
defined at the beginning of Section \ref{NLPDE}.
Up to now, $f$ 
 was only supposed to fulfill \eqref{Lgrowth}.

\begin{hypo} \label{Hypf}
	\
\begin{enumerate}
\item  $|f(t,x,y_1,z_1) - f(t,x,y_2,z_2)|\leq C(|y_1-y_2| + |z_1-z_2|),$
  for every $t \in [0,T], x \ge 0, y_1, y_2, z_1, z_2 \in \R$.
\item $f_0(t,x):=(t,x) \mapsto f(t,x,0,0) $ belongs to $L^{1,2}(dt d\mu)$.
  \end{enumerate}
\end{hypo}
Indeed Hypothesis \ref{Hypf} implies \eqref{Lgrowth}.

\medskip
      Concerning the final condition $g$ we still make
      the following assumption.
     \begin{hypo} \label{Hypg} 
       $ g \in L^2(d\mu)$.
\end{hypo}
In the proposition below we make use of the family of equivalent norms
parametrized by $\lambda > 0$, $\lVert \cdot\rVert_{B,\lambda}$, given by
        $$\lVert u \rVert_{B,\lambda} := \int_{0}^{T}\exp(\lambda t)\lVert u(t)\rVert_{\shh}dt.
$$
Clearly $ \lVert  \cdot \rVert_{B,\lambda} = \lVert  \cdot \rVert_{B}$ if $\lambda = 0$.

\begin{proposition}\label{somi}
  Suppose that $f$ and $g$ satisfy Hypotheses \ref{Hypf} and \ref{Hypg}.
  Let $A$ be the map  defined in \eqref{eqOpA}.
  Then $A^2$ is a contraction.
\end{proposition}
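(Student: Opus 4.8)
The plan is to derive a pointwise-in-$t$ estimate for $Au-Av$ in the $\shh$-norm, iterate it once, and then absorb the resulting double convolution by choosing $\lambda$ large. Let $u,v\in B$. Since the terminal datum cancels, \eqref{eqOpA} gives
\[
 Au(t)-Av(t)=\int_t^T P^\delta_{s-t}\big[f(s,\cdot,u(s),\partial_x u(s))-f(s,\cdot,v(s),\partial_x v(s))\big]\,ds,\qquad t\in[0,T[.
\]
By Proposition \ref{stabH} the integrand lies in $\shh$ for $s>t$, and by Corollary \ref{cuti} its $\shh$-norm is integrable on $[t,T]$; hence the integral is a Bochner integral with values in $\shh$ which agrees with the $L^2(d\mu)$-valued one in \eqref{eqOpA}, and the $\shh$-norm may be taken inside. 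Combining Hypothesis \ref{Hypf}(1), the elementary bounds $\|h\|_{L^2(d\mu)}\le\|h\|_\shh$ and $\|h'\|_{L^2(d\mu)}\le\sqrt2\,\|h\|_\shh$ coming from \eqref{Hnorm2}, and Corollary \ref{cuti}, one obtains a constant $K=K(C)$ (one may take $K=C(1+\sqrt2)$) with
\[
 \|Au(t)-Av(t)\|_\shh\le K\int_t^T\Big(1+\frac1{\sqrt{s-t}}\Big)\,\|u(s)-v(s)\|_\shh\,ds,\qquad t\in[0,T[.
\]

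Next I would apply the same inequality to $Au$ and $Av$ in place of $u$ and $v$ --- this is legitimate since $A$ maps $B$ into $B$ by Proposition \ref{stabBNonLin} --- to get
\[
 \|A^2u(t)-A^2v(t)\|_\shh\le K^2\int_t^T\!\!\int_s^T\Big(1+\frac1{\sqrt{s-t}}\Big)\Big(1+\frac1{\sqrt{r-s}}\Big)\,\|u(r)-v(r)\|_\shh\,dr\,ds.
\]
Multiplying by $e^{\lambda t}$, integrating over $t\in[0,T]$ and using Tonelli's theorem to integrate over the simplex $\{0\le t\le s\le r\le T\}$ with the factor $\|u(r)-v(r)\|_\shh$ kept to the outside, the whole estimate reduces to the one-dimensional bound
\[
 \int_0^s e^{\lambda t}\Big(1+\frac1{\sqrt{s-t}}\Big)\,dt=e^{\lambda s}\int_0^s e^{-\lambda u}\Big(1+\frac1{\sqrt u}\Big)\,du\le e^{\lambda s}\,g(\lambda),\qquad g(\lambda):=\frac1\lambda+\sqrt{\frac\pi\lambda},
\]
where one uses $\int_0^\infty e^{-\lambda u}u^{-1/2}\,du=\sqrt{\pi/\lambda}$. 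Applying this bound twice (first in the $t$-variable, then in the $s$-variable) leads to
\[
 \|A^2u-A^2v\|_{B,\lambda}\le K^2\,g(\lambda)^2\,\|u-v\|_{B,\lambda}.
\]
Since $g(\lambda)\to0$ as $\lambda\to+\infty$, one fixes $\lambda$ with $K^2 g(\lambda)^2<1$; because $\|\cdot\|_{B,\lambda}$ is equivalent to $\|\cdot\|_B$, this proves that $A^2$ is a contraction on $B$.

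The only point requiring some care is the identification of $Au(t)-Av(t)$ with an $\shh$-valued Bochner integral, which is what allows the $\shh$-norm to be pulled under the integral sign; this rests on Proposition \ref{stabH} together with the near-diagonal integrability of $s\mapsto 1+(s-t)^{-1/2}$ guaranteed by Corollary \ref{cuti}. The remainder is the routine bookkeeping of iterating the convolution kernel $1+(\cdot)^{-1/2}$, the Tonelli rearrangement, and the gamma-integral computation of $g(\lambda)$.
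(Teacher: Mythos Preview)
Your proof is correct and follows the same overall architecture as the paper's: derive a pointwise $\shh$-bound via Corollary~\ref{cuti} and the Lipschitz hypothesis, iterate it once to pass to $A^2$, then use the equivalent weighted norm $\|\cdot\|_{B,\lambda}$ to force contraction. Your explicit justification of the $\shh$-valued Bochner integral (via Proposition~\ref{stabH} and the integrability from Corollary~\ref{cuti}) is a point the paper treats somewhat more implicitly.

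The one genuine technical difference is how the iterated singular kernel is absorbed. The paper first replaces $1+\tfrac{1}{\sqrt{s-t}}$ by $\tfrac{\sqrt T+1}{\sqrt{s-t}}$, so that after the second iteration the pure Beta convolution $\int_t^r(r-s)^{-1/2}(s-t)^{-1/2}\,ds=\pi$ collapses everything to $C_T^2\pi\int_t^T\|u(r)-v(r)\|_\shh\,dr$; only then is the exponential weight applied against this bounded kernel, giving an explicit threshold on $\lambda$. You instead keep the kernel $1+\tfrac{1}{\sqrt{\cdot}}$ and apply the Laplace-type bound $\int_0^s e^{\lambda t}\bigl(1+(s-t)^{-1/2}\bigr)\,dt\le e^{\lambda s}\bigl(\lambda^{-1}+\sqrt{\pi/\lambda}\bigr)$ twice in succession. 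Both are rigorous; the paper's Beta identity yields a cleaner explicit contraction constant, while your route sidesteps that identity at the cost of the less explicit (but still vanishing) factor $K^2 g(\lambda)^2$.
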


\begin{proof}

First we are going to show that for
$u, v \in B$ and
$t \in [0,T]$ such that
$u(t), v(t) \in \shh,$ which happens a.e.
\begin{equation}\label{basin}
  ||Au(t) - Av(t)||_\shh \leq  C_T \int_{t}^{T}||u(s) - v(s)||_{\shh} 
  \dfrac{1}{\sqrt{s-t}} ds,
\end{equation}
where $C_T =  \sqrt{2}C (\sqrt{T} + 1)$ and $C$ comes from item (1) of Hypothesis \ref{Hypf}).
Indeed by a classical inequality for Bochner integrals and by Corollary \ref{cuti}
  \begin{eqnarray*}
    ||Au(t) - Av(t)||_\shh
                      &\le& \int_{t}^{T} \Vert P^{\delta}_{s-t}[f_u(s) - f_v(s)]  \Vert_\shh ds \\ 
                             &\leq& \int_{t}^{T}\Vert f_u(s) - f_v(s)\Vert_{L^2(d\mu)}\left(1+\dfrac{1}{\sqrt{s-t}}\right) ds \\
                                                &\leq& C\int_{t}^{T}(||u(s) - v(s)||_{L^2(d\mu)}
                                                 + ||(u(s) - v(s))'||_{L^2(d\mu)})
                                                       \left( 1 +  \dfrac{1}{\sqrt{s-t}}\right) ds \\
    &\leq& C (\sqrt{T} + 1)  \int_{t}^{T}(||u(s) - v(s)||_{L^2(d\mu)}
                                                 + ||(u(s) - v(s))'||_{L^2(d\mu)})
           \dfrac{1}{\sqrt{s-t}} ds \\
     &\leq& 2 C (\sqrt{T} + 1)  \int_{t}^{T} ||u(s) - v(s)||_{\shh}
           \dfrac{1}{\sqrt{s-t}} ds,
     \end{eqnarray*}
  where $f_u(s) = f(s,\cdot,u(s),u'(s))$.
  In the second to last inequality we have used the fact that
  $  1 \le \dfrac{\sqrt{T}}{\sqrt{s-t}}$ and in the last inequality we have
  used the fact that, for $a,b \ge0$
  we have $a + b \le 2 \sqrt{a^2 + \dfrac{b^2}{2}}.$
 This establishes \eqref{basin}.
From \eqref{basin} we deduce 
\begin{eqnarray*}
||A^2u(t) - A^2v(t)||_{\shh} &\leq& C_T  \int_{t}^{T}||Au(s) - Av(s)||_{\shh}
    \dfrac{1}{\sqrt{s - t}}ds \leq\\ &\leq& C_T^2\int_{t}^{T}\int_{s}^{T}||u(r) - v(r)||_{\shh}
    \dfrac{1}{\sqrt{r - s}}dr\dfrac{1}{\sqrt{s - t}}ds \\ &\leq& C_T^2\int_{t}^{T}||u(r) - v(r)||_{\shh}\int_{t}^{r}\dfrac{1}{\sqrt{r - s}}\dfrac{1}{\sqrt{s - t}}dsdr.
\end{eqnarray*}

We know that $$\int_{t}^{r}\dfrac{1}{\sqrt{r - s}}\dfrac{1}{\sqrt{s - t}}ds = \int_{0}^{r - t}\dfrac{1}{\sqrt{r - s -t}}\dfrac{1}{\sqrt{s}}ds =
\beta \left(\dfrac{1}{2}, \dfrac{1}{2}\right) = \pi,$$
where $\beta$
is the usual Beta function (see \cite{abra} section 6.1), so
\begin{equation}\label{basin2}
||A^2u(t) - A^2v(t)||_{\shh} \leq C_T^2\pi \int_{t}^{T}||u(r) - v(r)||_{\shh}dr.
\end{equation}


\eqref{basin2} implies
\begin{eqnarray*}
  \int_{0}^{T}\exp(\lambda t)\lVert A^2u(t) - A^2v(t)\rVert_{\shh}dt &\leq& C^2_T\pi\int_{0}^{T}\exp(\lambda t)\int_{t}^{T}\lVert u(s) - v(s)\rVert_{\shh}dsdt \\ &=& C^2_T\pi\int_{0}^{T}\lVert u(s) - v(s)\rVert_{\shh}\int_{0}^{s}\exp(\lambda(t))dtds \\
   &=& \dfrac{C^2_T\pi}{\lambda}\int_{0}^{T}\lVert u(s) - v(s)\rVert_{\shh}(\exp(\lambda s)-1)ds  \\
  &\le& \dfrac{C^2_T\pi}{\lambda}\int_{0}^{T}\lVert u(s) - v(s)\rVert_{\shh} \exp(\lambda s) ds. 
\end{eqnarray*}
This establishes
\[\lVert A^2u - A^2v\rVert_{B,\lambda} \leq
  \dfrac{C^2_T T \pi}{\lambda}\lVert u - v\rVert_{B,\lambda}.\]
Choosing $\lambda > C^2_T T \pi$ we have that $A^2:B \rightarrow B$ is a contraction with respect to $\lVert \cdot \rVert_{B,\lambda}$.

\end{proof}	

\begin{corollary} \label{CFixedP}
  The map $A$ defined in \eqref{eqOpA}
  has a unique fixed point.
\end{corollary}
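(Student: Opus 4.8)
The plan is to deduce Corollary \ref{CFixedP} from Proposition \ref{somi} together with Proposition \ref{contracM}. First I would invoke Proposition \ref{stabBNonLin} to know that $A$ indeed maps $B$ into itself, so that $A: B \to B$ is a well-defined self-map of the set $S := B$. Next, Proposition \ref{somi} tells us that $A^2: B \to B$ is a contraction with respect to the equivalent norm $\lVert \cdot \rVert_{B,\lambda}$ for a suitably large $\lambda > 0$. Since $(B, \lVert \cdot \rVert_{B,\lambda})$ is a Banach space (it is the same vector space $B = L^1([0,T];\shh)$ equipped with an equivalent norm, hence still complete), the Banach fixed point theorem applies to $A^2$ and yields a unique fixed point $u \in B$ of $A^2$.

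Then I would apply Proposition \ref{contracM} with $S = B$, $M = A$, and $n_0 = 2$: since $A^2 = M^{n_0}$ has a unique fixed point, the proposition immediately gives that $u$ is the unique fixed point of $A$ as well. That concludes the proof.

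The only mild subtlety worth a sentence is the verification that the hypothesis of Proposition \ref{contracM} ($n_0 > 1$ an integer, here $n_0 = 2$) is satisfied and that uniqueness of the fixed point of $A^2$ is genuinely available — this is exactly the content of the Banach fixed point theorem once contractivity is in hand from Proposition \ref{somi}. There is no real obstacle here; the work has all been done in Proposition \ref{somi}, and this corollary is essentially a two-line bookkeeping argument combining a completeness remark, the contraction mapping principle, and Proposition \ref{contracM}.

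\begin{proof}
  By Proposition \ref{stabBNonLin}, $A$ maps $B$ into $B$. By Proposition \ref{somi}, there exists $\lambda > 0$ such that $A^2: B \rightarrow B$ is a contraction with respect to the norm $\lVert \cdot \rVert_{B,\lambda}$, which is equivalent to $\lVert \cdot \rVert_{B}$; in particular $(B,\lVert \cdot \rVert_{B,\lambda})$ is a Banach space. By the Banach fixed point theorem, $A^2$ admits a unique fixed point $u \in B$. Applying Proposition \ref{contracM} with $S = B$, $M = A$ and $n_0 = 2$, we conclude that $u$ is the unique fixed point of $A$.
\end{proof}
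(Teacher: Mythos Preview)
Your proof is correct and follows exactly the same approach as the paper, which simply records the corollary as a direct consequence of Propositions \ref{somi} and \ref{contracM}. Your version is a slightly more detailed unpacking of that sentence (making explicit the appeal to the Banach fixed point theorem for $A^2$ and to Proposition \ref{stabBNonLin} for the self-map property), but the underlying argument is identical.
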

\begin{proof}
	It is a direct consequence of Propositions \ref{somi} and \ref{contracM}.
      \end{proof}
We can now state the main theorem of the paper.
      \begin{theorem} \label{MainT}
        Let $f:[0,T] \times \R_+\times \R \times \R \to \R$ and
        $g: \R_+ \rightarrow \R_+$ fulfilling Hypotheses  \ref{Hypf} and \ref{Hypg}.
        Then there exists a unique weak solution 
        $u:[0,T] \rightarrow L^2(d\mu)$ of \eqref{pde02}
belonging to $B$.
      \end{theorem}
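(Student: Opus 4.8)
The plan is to show that Theorem \ref{MainT} is an immediate consequence of the fixed point analysis already carried out. The key observation is that, for $u\in B=L^1([0,T];\shh)$, all the hypotheses of Proposition \ref{WeakMild} are automatically satisfied: indeed $u(t)\in\shh$ for a.e. $t$ by definition of $B$, and $r\mapsto f(r,\cdot,u(r),\partial_x u(r))$ belongs to $L^{1,2}(dt d\mu)$ by Remark \ref{R42} (which uses the Lipschitz growth \eqref{Lgrowth}, in turn implied by Hypothesis \ref{Hypf}, together with $u\in L^1([0,T];\shh)$). Consequently, restricted to the class of functions in $B$, being a weak solution of \eqref{pde02} in the sense of Definition \ref{D42} is \emph{equivalent} to being a mild solution in the sense of Definition \ref{D54}, i.e. to being a fixed point in $B$ of the map $A$ defined in \eqref{eqOpA}.

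The first step is therefore to invoke this equivalence and reduce the statement to: $A$ has exactly one fixed point in $B$. For \textbf{existence}, I would simply apply Corollary \ref{CFixedP}, which (via Proposition \ref{somi}, asserting $A^2$ is a contraction in the equivalent norm $\lVert\cdot\rVert_{B,\lambda}$ for $\lambda$ large, and Proposition \ref{contracM}, transferring the unique fixed point of $A^2$ to $A$) yields a fixed point $u\in B$. By Proposition \ref{stabBNonLin}, $Au(t)\in L^2(d\mu)$ for \emph{every} $t\in[0,T]$ and $t\mapsto Au(t)$ is continuous with values in $L^2(d\mu)$; since $u=Au$, this shows the solution is genuinely a map $[0,T]\to L^2(d\mu)$ (even continuous), as required by the statement.

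For \textbf{uniqueness}, I would take two weak solutions $u,v\in B$ of \eqref{pde02}. By Proposition \ref{WeakMild} both are mild solutions, hence both are fixed points of $A$ in $B$. The uniqueness clause of Corollary \ref{CFixedP} then forces $u=v$ in $B$, i.e. $u(t)=v(t)$ in $\shh$ (and a fortiori in $L^2(d\mu)$) for almost every $t\in[0,T]$. Finally, using once more that mild solutions are continuous from $[0,T]$ into $L^2(d\mu)$ (Proposition \ref{stabBNonLin}), the a.e.\ equality upgrades to equality for all $t\in[0,T]$.

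Honestly, there is no serious obstacle left at this stage: the analytic difficulty has been absorbed into Proposition \ref{uti}/Corollary \ref{cuti} (the Schauder-type bound $\lVert (P^\delta_t f)'\rVert_{L^2(d\mu)}\le t^{-1/2}\lVert f\rVert_{L^2(d\mu)}$, which makes the kernel $1/\sqrt{s-t}$ integrable and drives the contraction of $A^2$ after the Beta-function computation $\beta(\tfrac12,\tfrac12)=\pi$) and into the weak$\leftrightarrow$mild identification of Proposition \ref{WeakMild}. The only point demanding a little care in the write-up is the bookkeeping in the reduction: one must state explicitly that membership in $B$ guarantees the integrability hypothesis on $f(\cdot,\cdot,u,\partial_x u)$ needed to apply Proposition \ref{WeakMild}, so that the equivalence ``weak solution in $B$'' $\Longleftrightarrow$ ``fixed point of $A$ in $B$'' is legitimate in both directions.
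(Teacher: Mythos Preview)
Your proposal is correct and follows essentially the same approach as the paper's own proof: reduce weak to mild via Proposition \ref{WeakMild}, obtain existence and uniqueness of the mild solution as the unique fixed point of $A$ via Corollary \ref{CFixedP}, and upgrade the a.e.\ equality to equality for all $t$ using continuity of mild solutions in $L^2(d\mu)$. If anything, you are slightly more explicit than the paper in checking (via Remark \ref{R42}) that membership in $B$ guarantees the integrability hypothesis on $f(\cdot,\cdot,u,\partial_x u)$ needed to invoke Proposition \ref{WeakMild}.
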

      \begin{proof} \
       By Proposition \ref{WeakMild},
       it is equivalent to prove existence and uniqueness of a  mild
       solution $u$.

       Concerning existence,
       Corollary \ref{CFixedP} says that the operator $A$, \eqref{linop}, admits a (unique) fixed point $u \in B$.
       By Proposition \ref{stabB} $u(t) \in L^2(d\mu)$ for every $t \in [0,T]$.
       Concerning uniqueness, let $u$ and $v$ be two mild solutions belonging to $B$.
       By Corollary \ref{CFixedP} $u(t) = v(t)$ for almost all $t \in [0,T]$
       as elements of $\shh$ and, therefore, also
       as elements of $L^2(d\mu)$.
       Since mild solutions are continuous in $L^2(d\mu)$
       the result follows.
        \end{proof}

\appendix
	
\section{Semigroups, self-adjoint operators, closed forms and Friedrichs extension}\label{apA}
\

\vspace{0.5cm}
In this section we recall 
some useful functional analysis results
for the study of parabolic PDEs. We also summarize what we need from
the basic theory of the so called Friedrichs self-adjoint extension of a symmetric positive linear operator.
In this section $H$ denotes a Hilbert space with inner product $\langle \cdot,\cdot\rangle_H$ 
and corresponding norm $\Vert \cdot \Vert_H$

For the two definitions below we refer, e.g., to Section 1.3. of \cite{fuku}.
\begin{definition}\label{d2}
  Let $(P_t)_{t \ge 0}$ be a family of linear operators mapping $H$ into $H$.
  We say that $(P_t)_{t>0}$ is a {\bf symmetric contractive strongly continuous semigroup}
  if the following holds.
	\begin{enumerate}
		\item $\langle P_t [u],v\rangle_H = \langle u, P_t[v]\rangle_H$, $u, v \in H$ (symmetry),
		\item $P_tP_s = P_{t+s}, t,s \ge 0$ (semigroup property),
		\item $\Vert P_t [u]\Vert_H \leq \Vert u\Vert_H$
                  (contraction property),
		\item for all $u \in H$, $\displaystyle \lim_{t\downarrow 0}\lVert P_t[u] - u \rVert_H = 0$ (strong continuity).
	\end{enumerate}
      \end{definition}
      In particular the map
      $t \mapsto P_t [u]$ is continuous from
      $[0,T]$ to $H$, taking into account previous
      items (4) and (2).

\begin{definition}\label{d3}
  The generator, $T$, of a symmetric contractive strongly continuous semigroup,
  $(P_t)$, on  $H$ is defined by
	\begin{equation}\label{gene}
		\left\{
		\begin{array}{l}
			Tu = \displaystyle\lim_{t\downarrow 0}\dfrac{P_tu - u}{t}\\
			dom(T) = \{u \in H, \text{for which } Tu \text{ exists in } H\}.
		\end{array}
		\right.
	\end{equation}
	
\end{definition}
The next proposition is an adaptation of Lemma 1.3.2 in Section 1.3 of \cite{fuku}.

\begin{proposition}\label{PA2}
  Let $T$ be a non-positive definite, self-adjoint linear operator on $H$.
  There exists a unique symmetric, contractive and strongly continuous semigroup, $P = (P_t)_{t\geq 0}$, on $H$ such that
        $T$ is the generator of $P$.
\end{proposition}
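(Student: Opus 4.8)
The plan is to construct $P$ directly from the spectral calculus of $T$. Since $T$ is self-adjoint and non-positive definite (i.e. $\langle Tu,u\rangle_H\le 0$ for $u\in dom(T)$), its spectrum is contained in $(-\infty,0]$, and the spectral theorem provides a projection-valued measure $E$ supported on $(-\infty,0]$ with $T=\int_{(-\infty,0]}\lambda\,dE(\lambda)$ and $dom(T)=\{u\in H:\int_{(-\infty,0]}\lambda^{2}\,d\langle E(\lambda)u,u\rangle_{H}<\infty\}$. For $t\ge 0$ I would set $P_t:=\int_{(-\infty,0]}e^{t\lambda}\,dE(\lambda)$, the operator obtained by applying the bounded Borel function $\lambda\mapsto e^{t\lambda}$ through the functional calculus. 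Because $0<e^{t\lambda}\le 1$ on the support of $E$, this is a bounded operator with $\Vert P_t\Vert\le 1$, which is item (3) of Definition \ref{d2}.

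Next I would verify the remaining items of Definition \ref{d2}. Item (1) (symmetry) holds because $e^{t\lambda}$ is real, so $P_t$ is in fact self-adjoint; item (2) (semigroup property) follows from the multiplicativity of the functional calculus, $e^{t\lambda}e^{s\lambda}=e^{(t+s)\lambda}$, together with $P_0=I$; and item (4) (strong continuity) follows from
$$\Vert P_t[u]-u\Vert_{H}^{2}=\int_{(-\infty,0]}|e^{t\lambda}-1|^{2}\,d\langle E(\lambda)u,u\rangle_{H},$$
since the integrand is bounded by $1$ and tends to $0$ pointwise as $t\downarrow 0$, so that Lebesgue's dominated convergence theorem applies.

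Then I would identify $T$ as the generator of $P$. For $u\in dom(T)$,
$$\left\Vert\frac{P_t[u]-u}{t}-Tu\right\Vert_{H}^{2}=\int_{(-\infty,0]}\left|\frac{e^{t\lambda}-1}{t}-\lambda\right|^{2}d\langle E(\lambda)u,u\rangle_{H},$$
and since $|(e^{t\lambda}-1)/t|\le|\lambda|$ for $\lambda\le 0$ (from $e^{x}\ge 1+x$), the integrand is dominated by $4\lambda^{2}$, which is integrable precisely because $u\in dom(T)$; dominated convergence gives $(P_t[u]-u)/t\to Tu$ in $H$. Hence, writing $A$ for the generator of $P$ in the sense of \eqref{gene}, one has $T\subseteq A$. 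To conclude $T=A$ it suffices to note that $A$ is itself self-adjoint, being the generator of a strongly continuous semigroup of self-adjoint contractions, and that a self-adjoint operator admits no proper self-adjoint extension; alternatively, for $\lambda>0$ both $\lambda-T$ and $\lambda-A$ map their domains bijectively onto $H$, and $\lambda-T\subseteq\lambda-A$ forces $dom(T)=dom(A)$.

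Finally, for uniqueness, let $Q=(Q_t)$ be another semigroup as in Definition \ref{d2} with generator $T$. Fix $u\in dom(T)$ and $t>0$ and consider the map $s\mapsto Q_{t-s}[P_s[u]]$ on $[0,t]$. Using that $s\mapsto P_s[u]$ is $C^1$ with values in $dom(T)$ and derivative $T P_s[u]=P_s[Tu]$, that $s\mapsto Q_s[v]$ is $C^1$ with derivative $T Q_s[v]$ for $v\in dom(T)$, and the standard fact that $Q_r$ maps $dom(T)$ into itself and commutes there with $T$, one computes that this map has identically zero derivative, so $Q_t[u]=P_t[u]$. Since $dom(T)$ is dense in $H$ (self-adjoint operators being densely defined) and $P_t,Q_t$ are contractions, the equality extends to all of $H$. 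The only delicate points are the domain bookkeeping in this last step and the product-rule differentiation of $s\mapsto Q_{t-s}[P_s[u]]$; in any case the statement is an adaptation of Lemma 1.3.2 in Section 1.3 of \cite{fuku}, which could instead simply be invoked.
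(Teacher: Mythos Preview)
Your proof is correct. The paper does not actually prove this proposition at all: it simply records it as ``an adaptation of Lemma 1.3.2 in Section 1.3 of \cite{fuku}'' and moves on. You supply the full spectral-calculus argument (construction of $P_t=\int e^{t\lambda}\,dE(\lambda)$, verification of the four items of Definition \ref{d2}, identification of the generator via dominated convergence, and the standard $s\mapsto Q_{t-s}P_s$ uniqueness argument), and you even note at the end that one could instead just invoke the cited lemma---which is precisely what the paper does. So your write-up strictly subsumes the paper's treatment.
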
 

The following statement can be found in Corollary 1.4 in Section 1.1 of \cite{pazy}.

  \begin{corollary}\label{PA0}
  Let $T$ be the generator of a symmetric contractive strongly continuous semigroup $(P_t)$ on  $H$. Then for $u \in dom(T)$, we have
  $P_t[u] \in dom(T)$ and $\partial_t P_t[u] = TP_t[u] = P_t[Tu]$ on $H$. 
\end{corollary}

The following definition can be found in Section 1.1 of \cite{fuku}.

		\begin{definition}\label{d4}
                  A bilinear form
                  $\epsilon:D(\epsilon)\times D(\epsilon)\to \R$ on $H$, where $D(\epsilon)$ is a dense linear subspace of $H$, is called a {\bf symmetric form} if
                  it is a ``inner product'' on $D(\epsilon)$ without the assumption
              	 $$\epsilon(u,u) = 0 \Rightarrow u = 0.$$
	We say that a symmetric form $\epsilon$ is
        {\bf closed} when $D(\epsilon)$ is complete with respect to the metric (norm) generated by the inner product
        $$(u,v) := \epsilon(u,v) + \langle u,v\rangle_H.$$   
		\end{definition}
                Let
      $T: dom(T) \subset H
        \rightarrow H$, where $dom(T)$ is a dense linear subspace
        in $H$, be a non-negative self-adjoint linear map.
 By Theorem 1 of Section 6, Chapter XI of \cite{yosi},
        there is a unique so called spectral resolution
        related to $T$. By means of this one can
        define the maps $\phi(T)$ for any continuous function
        $\phi:\R_+ \rightarrow \R_+$, in particular
        we can take $\phi(x) = \sqrt{x}$.
                We consider the map $\sqrt{T}: dom(\sqrt{T}) \subset H \rightarrow H$,
                where $dom(\sqrt{T})$ and $\sqrt{T}$ are intended 
                 via the spectral resolution, see discussion after Lemma 1.3.1 of \cite{fuku}.
                 
                 The next two propositions come from Theorem 1.3.1 and Corollary 1.3.1 in Section 1.3 of \cite{fuku}.

\begin{proposition}\label{the1}
                  There exists an one to one correspondence between the family of closed symmetric forms $\epsilon$ on $H$ and the family of
                                   non-negative self-adjoint linear operators
                  $T: dom(T) \subset H \rightarrow H$.
                  That equivalence is characterized by the following.
			\begin{enumerate}
				\item $D(\epsilon) = dom(\sqrt{T})$,
                                \item
          \begin{equation}\label{bsf1}
	\epsilon(u,v) = \left\langle \sqrt{T}u,\sqrt{T}v\right\rangle_{H}, 
\quad \forall u,v \in D(\epsilon),
      \end{equation}
                 \item
  $ dom(T) \subset D(\epsilon),$
\item $\epsilon(u,v) = \langle v, Tu\rangle_{H}, u\in dom(T) \
                               \ and\ \ v \in dom(\sqrt{T}).$
			\end{enumerate}
			
                      \end{proposition}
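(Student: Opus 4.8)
The plan is to prove the two directions of the stated bijection separately and then check that the constructions are mutually inverse, relying throughout on the spectral resolution $\{E_\lambda\}_{\lambda\ge 0}$ of a non-negative self-adjoint operator (Theorem 1, Section 6, Chapter XI of \cite{yosi}) and its functional calculus $\phi\mapsto\phi(T)=\int_0^\infty\phi(\lambda)\,dE_\lambda$. This is a symmetric, non-negative instance of the classical first representation theorem relating closed forms and self-adjoint operators.

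\emph{From operator to form.} Given a non-negative self-adjoint $T$, I set $D(\epsilon):=dom(\sqrt T)$ and $\epsilon(u,v):=\langle\sqrt T u,\sqrt T v\rangle_H$, which is manifestly a symmetric form. Its form norm $\|u\|_1^2:=\epsilon(u,u)+\|u\|_H^2=\|\sqrt T u\|_H^2+\|u\|_H^2$ coincides with the graph norm of $\sqrt T$; since $\sqrt T$ is self-adjoint and hence closed, $(D(\epsilon),\|\cdot\|_1)$ is complete, so $\epsilon$ is closed. From $\lambda\le 1+\lambda^2$ one gets $dom(T)\subset dom(\sqrt T)=D(\epsilon)$, which is item (3); and $\sqrt T\,\sqrt T = T$ on $dom(T)$ yields $\epsilon(u,v)=\langle Tu,v\rangle_H$ for $u\in dom(T)$, $v\in D(\epsilon)$, which is item (4). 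Finally $D(\epsilon)\supset\bigcup_n\mathrm{Ran}(E_n)$ is dense in $H$.

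\emph{From form to operator.} Given a closed symmetric form $\epsilon$, I make $D(\epsilon)$ into a Hilbert space with inner product $(u,v)_1:=\epsilon(u,v)+\langle u,v\rangle_H$ (that is precisely what closedness means), continuously embedded into $H$. For $f\in H$ the functional $v\mapsto\langle f,v\rangle_H$ is $\|\cdot\|_1$-continuous on $D(\epsilon)$, so the Riesz representation theorem produces a unique $Gf\in D(\epsilon)$ with $(Gf,v)_1=\langle f,v\rangle_H$ for all $v\in D(\epsilon)$, and $\|Gf\|_1\le\|f\|_H$. A short check shows $G:H\to H$ is bounded, injective, self-adjoint and non-negative, with range dense in $H$ (if $g\perp\mathrm{Ran}(G)$ then $\langle g,Gg\rangle_H=\|Gg\|_1^2=0$, so $Gg=0$, whence $g=0$ by density of $D(\epsilon)$). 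One then defines $dom(T):=\mathrm{Ran}(G)$ and $T:=G^{-1}-I$, so that $T$ is non-negative and self-adjoint, $dom(T)\subset D(\epsilon)$, and unwinding the definition of $G$ gives $\epsilon(u,v)=\langle Tu,v\rangle_H$ for $u\in dom(T)$, $v\in D(\epsilon)$.

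\emph{Mutual inversion.} Starting from $T$, building $\epsilon$ and then $G$ as above, one identifies $G=(T+I)^{-1}$ using item (4) together with the uniqueness in the Riesz representation; conversely, starting from $\epsilon$, building $T$ and then $\sqrt T$ via the spectral theorem, one must recover $D(\epsilon)=dom(\sqrt T)$ with the two forms agreeing. I expect this last identification to be the main obstacle. The point is to show that $dom(T)$ is dense in $(D(\epsilon),\|\cdot\|_1)$ — a ``form core'' statement, which follows from $n(nI+T)^{-1}u\to u$ in the $\|\cdot\|_1$-norm for $u\in D(\epsilon)$ — and then to extend the identity $\epsilon(u,v)=\langle\sqrt T u,\sqrt T v\rangle_H$ from $dom(T)$ to all of $D(\epsilon)$ by $\|\cdot\|_1$-continuity; this extension simultaneously forces $D(\epsilon)=dom(\sqrt T)$. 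The other slightly delicate step — that the form-constructed $T$ is genuinely self-adjoint rather than merely symmetric — is dispatched cleanly by the everywhere-defined bounded resolvent $G$.
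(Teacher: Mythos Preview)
Your proof is correct and follows the standard route (spectral calculus for one direction, Riesz representation / resolvent construction for the other, form-core density for the mutual inversion). Note, however, that the paper does not actually prove this proposition: it is stated in the appendix as a recall from the literature, with the sentence ``The next two propositions come from Theorem 1.3.1 and Corollary 1.3.1 in Section 1.3 of \cite{fuku}'' serving in lieu of proof. So there is no paper-proof to compare against; your argument is essentially the one found in the cited reference.
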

                 
		The next proposition is an adaptation of Lemma 1.3.3 in Section 1.3 of \cite{fuku}.
		\begin{proposition}\label{utibis}
                  Let $T$ be a non-negative definite, self-adjoint operator and $\epsilon$ be the closed form corresponding to
             $T$ as described in Proposition \ref{the1}. Let $(P_t)$ be the unique semigroup
whose generator is $-T$ as described in Proposition \ref{PA2}. We have the following properties.
			\begin{enumerate}
                        \item For $t > 0$
$P_t(H)\subset D(\epsilon) (= dom(\sqrt{T}))$.
 \item $\epsilon(P_t[u],P_t[u]) \leq \dfrac{1}{2t}\lVert u\rVert^2_{H}, u \in D(\epsilon).$
			\end{enumerate}
		\end{proposition}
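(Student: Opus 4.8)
The plan is to prove both items by the spectral calculus for the non-negative self-adjoint operator $T$, exactly as recalled just before Proposition \ref{the1}. Let $(E_\lambda)_{\lambda \ge 0}$ be the spectral resolution of $T$, so that $T = \int_{[0,\infty)}\lambda\,dE_\lambda$ and, for a Borel function $\phi:\R_+ \to \R$, the operator $\phi(T) = \int_{[0,\infty)}\phi(\lambda)\,dE_\lambda$ has domain $\{u \in H : \int_{[0,\infty)}|\phi(\lambda)|^2\,d\langle E_\lambda u,u\rangle_H < \infty\}$ and satisfies $\|\phi(T)u\|_H^2 = \int_{[0,\infty)}|\phi(\lambda)|^2\,d\langle E_\lambda u,u\rangle_H$ on that domain. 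First I would identify the abstractly given semigroup $P=(P_t)$ with the one produced by functional calculus. The family $\tilde P_t := \int_{[0,\infty)}e^{-t\lambda}\,dE_\lambda$ is symmetric, contractive (since $0 \le e^{-t\lambda}\le 1$ for $\lambda \ge 0$) and strongly continuous, and a direct computation shows that its generator is $-T$. Since $-T$ is non-positive definite and self-adjoint, the uniqueness assertion in Proposition \ref{PA2} forces $P_t = \tilde P_t$, i.e. $P_t = \int_{[0,\infty)}e^{-t\lambda}\,dE_\lambda$ for every $t>0$.

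For item (1), fix $u \in H$ and $t > 0$. By Proposition \ref{the1} item (1) we have $D(\epsilon) = dom(\sqrt{T})$, so it suffices to check $P_t u \in dom(\sqrt{T})$. By the functional calculus, $\sqrt{T}P_t$ corresponds to the Borel function $\lambda \mapsto \sqrt{\lambda}\,e^{-t\lambda}$, hence $P_t u \in dom(\sqrt{T})$ precisely when $\int_{[0,\infty)}\lambda\,e^{-2t\lambda}\,d\langle E_\lambda u,u\rangle_H < \infty$. The elementary bound $\sup_{\lambda \ge 0}\lambda\,e^{-2t\lambda} = \tfrac{1}{2te}$ (the maximum being attained at $\lambda = \tfrac{1}{2t}$) shows that the integrand is dominated by $\tfrac{1}{2te}\,d\langle E_\lambda u,u\rangle_H$, whose total mass is $\tfrac{1}{2te}\|u\|_H^2 < \infty$. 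Thus $P_t u \in dom(\sqrt{T}) = D(\epsilon)$.

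For item (2), using $\epsilon(v,v) = \|\sqrt{T}v\|_H^2$ (Proposition \ref{the1} item (2) with $u=v$) together with the computation above, I obtain
\begin{equation*}
  \epsilon(P_t u, P_t u) = \|\sqrt{T}P_t u\|_H^2 = \int_{[0,\infty)}\lambda\,e^{-2t\lambda}\,d\langle E_\lambda u,u\rangle_H \le \frac{1}{2te}\,\|u\|_H^2 \le \frac{1}{2t}\,\|u\|_H^2,
\end{equation*}
the last step since $e^{-1}\le 1$. This is precisely the claimed inequality (in fact with the sharper constant $\tfrac{1}{2te}$), and it holds for every $u \in H$, in particular for $u \in D(\epsilon)$.

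The only genuinely delicate point is the identification $P_t = e^{-tT}$; once it is in place, everything else is a one-line consequence of the spectral representation and the scalar estimate $\lambda e^{-2t\lambda}\le \tfrac{1}{2te}$. I would therefore concentrate the effort on the uniqueness step of Proposition \ref{PA2}, namely verifying that $\tilde P_t$ is strongly continuous and that its infinitesimal generator is exactly $-T$, after which both items follow immediately.
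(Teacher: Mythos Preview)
Your argument is correct. The paper does not actually give a proof of this proposition; it simply states that it is an adaptation of Lemma 1.3.3 in Section 1.3 of \cite{fuku}. Your spectral-calculus proof is the standard one (and is essentially what lies behind Fukushima's lemma), so you are supplying precisely the details the paper defers to the reference, and you even obtain the sharper constant $\tfrac{1}{2te}$.
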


For the following two definitions and proposition, see \cite{ando}.
\begin{definition}\label{OpComp}
  Let $T_1, T_2$ be two non-negative definite, self-adjoint operators on $H$. We say that $T_2$ is greater than $T_1$, we write $T_2\geq T_1$, if
  the following holds:
	\begin{itemize}
		\item $dom(\sqrt{T_2})\subseteq dom(\sqrt{T_1}),$
		\item $\lVert\sqrt{T_1}u\rVert_H \leq \lVert\sqrt{T_2}u\rVert_H$, $u\in dom(\sqrt{T_2}).$
	\end{itemize}
\end{definition}		
		
\begin{definition}\label{Fried}
  Let $H$ be a Hilbert space and $T$ a symmetric, non-negative linear operator on $H$. The greatest (with respect to the order in Definition \ref{OpComp})
  non-negative self-adjoint extension of $T$, if it exists, is called the {\bf Friedrichs extension}
  of $T$ and represented by $T_F$. Then we say that $T$ admits the
  {\bf Friedrichs extension}.
\end{definition}				
				
		\begin{proposition} \label{A.9}
			Let $H$ be a Hilbert space and $T$ a non-negative definite, symmetric, densely defined (i.e. its domain $dom(T)$ is dense in $H$) linear operator. $T$ admits the Friedrichs extension, $T_F$. Moreover
			$$dom(\sqrt{T_F}) = \left\{f \in H; \exists (f_n), f_n \in dom(T), \lim_{n} f_n = f, \lim_{m,n}\langle f_m -f_n, Tf_n - Tf_m\rangle_H = 0 \right\},$$
                        and $T_F$ is the restriction of $T^*$ on $dom(T^*) \cap dom(\sqrt{T_F})$.
                       
                      \end{proposition}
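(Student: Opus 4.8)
The plan is to run the classical Friedrichs construction and then read off the two explicit descriptions. First I would equip $dom(T)$ with the inner product $\langle u,v\rangle_1:=\langle u,v\rangle_H+\langle Tu,v\rangle_H$; since $T$ is non-negative, $\|u\|_1^2\ge\|u\|_H^2$, so $\|\cdot\|_1$ is a genuine norm and the identity $(dom(T),\|\cdot\|_1)\hookrightarrow H$ is a contraction. Let $H_1$ be the abstract completion of $(dom(T),\|\cdot\|_1)$. The crucial point is that this contraction extends to an \emph{injective} bounded map $j\colon H_1\to H$: given a $\|\cdot\|_1$-Cauchy sequence $(f_n)\subset dom(T)$ with $f_n\to 0$ in $H$, write $q(u,v):=\langle Tu,v\rangle_H$; from $q(f_n,f_n)=q(f_n-f_m,f_n)+\langle f_m,Tf_n\rangle_H$ and $f_m\to 0$ one gets $q(f_n,f_n)=\lim_m q(f_n-f_m,f_n)$, while the Cauchy--Schwarz inequality for the non-negative form $q$ bounds $|q(f_n-f_m,f_n)|$ by $q(f_n-f_m,f_n-f_m)^{1/2}q(f_n,f_n)^{1/2}$; since $(f_n)$ is $q$-Cauchy this forces $q(f_n,f_n)\to 0$, hence $\|f_n\|_1\to 0$. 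This is the step where symmetry and non-negativity of $T$ are genuinely used, and it is the main obstacle; everything afterwards is bookkeeping.

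Having identified $H_1$ with a dense subspace $Q:=j(H_1)\subset H$, I would set $\epsilon(u,v):=\langle u,v\rangle_1-\langle u,v\rangle_H$ on $Q$ (the limit of $q(f_n,g_n)$ along approximating sequences from $dom(T)$). By construction $\epsilon$ is a symmetric non-negative form with $D(\epsilon)=Q$, and it is closed because $Q$ is complete for the norm $\sqrt{\epsilon(\cdot,\cdot)+\langle\cdot,\cdot\rangle_H}=\|\cdot\|_1$. Applying the correspondence of Proposition \ref{the1} in the direction ``closed form $\mapsto$ operator'' yields a non-negative self-adjoint operator, denoted $T_F$, with $dom(\sqrt{T_F})=Q$ and $\epsilon(u,v)=\langle\sqrt{T_F}u,\sqrt{T_F}v\rangle_H$. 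That $T_F$ extends $T$ follows from Proposition \ref{the1}: for $u\in dom(T)$ and $v\in Q$ one has $\epsilon(u,v)=\langle Tu,v\rangle_H$, so $v\mapsto\epsilon(u,v)$ is $\|\cdot\|_H$-continuous, i.e. $u\in dom(T_F)$ and $T_Fu=Tu$. For maximality, let $S$ be any non-negative self-adjoint extension of $T$, with form $\epsilon_S$ on $dom(\sqrt S)$. For $u\in dom(T)$ one has $\|u\|_H^2+\epsilon_S(u,u)=\|u\|_H^2+\langle Su,u\rangle_H=\|u\|_H^2+\langle Tu,u\rangle_H=\|u\|_1^2$, so on $dom(T)$ the form norms of $S$ and of the construction coincide; taking closures and using injectivity of $dom(\sqrt S)\hookrightarrow H$, this gives $Q=dom(\sqrt{T_F})\subseteq dom(\sqrt S)$ and $\epsilon_S=\epsilon$ on $Q\times Q$, i.e. $\|\sqrt S u\|_H=\|\sqrt{T_F}u\|_H$ for $u\in dom(\sqrt{T_F})$. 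By Definition \ref{OpComp} this is exactly $T_F\ge S$, so $T_F$ is the Friedrichs extension in the sense of Definition \ref{Fried}.

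Next I would translate the explicit formula for $dom(\sqrt{T_F})$. Since $q$ is symmetric, $\langle f_m-f_n,Tf_n-Tf_m\rangle_H=-q(f_m-f_n,f_m-f_n)\le 0$, so the condition $\lim_{m,n}\langle f_m-f_n,Tf_n-Tf_m\rangle_H=0$ says precisely that $(f_n)$ is Cauchy for the seminorm $q(\cdot,\cdot)^{1/2}$. Combined with $f_n\to f$ in $H$, and using $\|f_n-f_m\|_1^2=\|f_n-f_m\|_H^2+q(f_n-f_m,f_n-f_m)$, the sequence $(f_n)$ is then $\|\cdot\|_1$-Cauchy with $H$-limit $f$, so $f\in j(H_1)=Q=dom(\sqrt{T_F})$; conversely every $f\in Q$ is approximated in this way because $dom(T)$ is $\|\cdot\|_1$-dense in $Q$, while $q(f_n-f_m,f_n-f_m)\le\|f_n-f_m\|_1^2\to 0$. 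This gives the displayed set equality.

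Finally, for the identification $T_F=T^*|_{dom(T^*)\cap dom(\sqrt{T_F})}$: if $u\in dom(T_F)$, then for every $v\in dom(T)$, $\langle T_Fu,v\rangle_H=\epsilon(u,v)=\epsilon(v,u)=\langle Tv,u\rangle_H=\langle u,Tv\rangle_H$, and since $v\mapsto\langle T_Fu,v\rangle_H$ is $\|\cdot\|_H$-continuous this means $u\in dom(T^*)$ with $T^*u=T_Fu$; as $dom(T_F)\subseteq dom(\sqrt{T_F})$, we obtain $T_F\subseteq T^*|_{dom(T^*)\cap dom(\sqrt{T_F})}$. Conversely, if $u\in dom(T^*)\cap dom(\sqrt{T_F})$, then on $dom(T)$ the two maps $v\mapsto\langle T^*u,v\rangle_H=\langle u,Tv\rangle_H$ and $v\mapsto\epsilon(u,v)$ agree; the first is $\|\cdot\|_H$- (hence $\|\cdot\|_1$-) continuous and the second is $\|\cdot\|_1$-continuous, so they agree on the $\|\cdot\|_1$-dense set $dom(T)$ and thus on all of $Q=dom(\sqrt{T_F})$. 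Therefore $v\mapsto\epsilon(u,v)$ is $\|\cdot\|_H$-continuous on $dom(\sqrt{T_F})$, which by Proposition \ref{the1} forces $u\in dom(T_F)$ and $T_Fu=T^*u$. Combining the two inclusions yields the claimed equality, completing the proof.
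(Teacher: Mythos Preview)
Your proof is correct and follows the classical Friedrichs construction: complete $dom(T)$ in the graph-type norm, show the completion embeds injectively into $H$, extract a closed non-negative form, invoke the form/operator correspondence (Proposition~\ref{the1}), and then verify the extension, maximality, the explicit description of $dom(\sqrt{T_F})$, and the restriction-of-adjoint characterization. The only places that deserve a reader's attention are the injectivity step (where you correctly exploit Cauchy--Schwarz for the semidefinite form $q$) and the maximality step (where you implicitly use that the form of any self-adjoint extension $S$ is closed, so the $\|\cdot\|_1$-closure of $dom(T)$ lands inside $dom(\sqrt{S})$); both are handled adequately.

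The comparison with the paper is short: the paper does \emph{not} prove Proposition~\ref{A.9}. It is stated in Appendix~\ref{apA} as a quoted result, with the sentence ``For the following two definitions and proposition, see \cite{ando}'' serving as the only justification. So your write-up supplies a self-contained argument where the paper simply defers to the literature. What your approach buys is independence from the cited source and a direct link to Proposition~\ref{the1}, which the paper already relies on elsewhere; what the paper's approach buys is brevity, since the result is entirely standard.
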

 \begin{remark} \label{A.10}
$g^* \in dom(T^*)$  (as usual) if and only if  the liner form
$f \mapsto \langle Tf, g^* \rangle_H $ is continuous. 
                     \end{remark}

\section{Technical results}\label{apB}
\

\vspace{0.5cm}

We start introducing an useful sequence
of approximating functions.
 Let
 $\chi\in C^{\infty}(\R,[0, 1])$  
verifying
\begin{equation*}
	\chi(x)=\left\{\begin{array}{l}
		1, x\leq -1,\\
		0, x\geq 0.
	\end{array}\right.
\end{equation*} 
For each $ n \in \N, $ we set $\chi_n(x) := \chi(x-(n+1)), \ x \in \R_+$, so
\begin{equation} \label{eq:chin}
	\chi_n(x)=\left\{\begin{array}{l}
		1, x \leq n,\\
		0, x \geq n+1.
	\end{array}\right.
\end{equation}
In particular the functions  $\chi_n$ have compact support.

\begin{remark} \label{rem:SMP}
Let $(\chi_n)$ be the sequence  defined in \eqref{eq:chin}.
    Each $\chi_n$ is a function with compact support
    such that  in a neighborhood of zero,
    $\chi_n$ (resp.  $\chi_n',  \chi_n'')  $
    is equal to $1$ (resp. vanish).

  \begin{enumerate}
  \item Let $ f  \in \shd_{L^\delta}(\R_+)$.
     Setting $f_n:= f \chi_n,  n \in \N$,
    we have that $f_n \in D,$ where $D$ was defined in \eqref{SetD}.
    Moreover it is easy to show that 
$f_n, f'_n, L^\delta f_n$ converge
respectively to $f, f', L^\delta f$ uniformly on each compact interval.
\item Let
   $u \in C^{1,2}([0,T] \times \R_+;\R)$
  with $\partial_x u (s,0) = 0$.
  In particular  $u: [0,T] \rightarrow  \shd_{L^\delta}(\R_+)$.
  We set $u_n(t,x) := u(t,x) \chi_n(x), \ (t,x) \in [0,T] \times \R_+$.
  Then $t \mapsto u_n(t) \in C^1([0,T];D)$.
        Moreover it is easy to see that
        $u_n, \partial_x u_n, L_x^\delta u_n$ converge
        respectively to $u, \partial_xu, L_x^\delta u$
        uniformly on compact sets of $[0,T] \times \R_+$.
        Here $ L^\delta_x$ means $L^\delta$ acting on the space variable $x$.
        \end{enumerate}
\end{remark}

\begin{prooff} (of Proposition \ref{eqH}).
 

  Let $\chi$ and the sequence $(\chi_n)$ defined in \eqref{eq:chin}.
  Let  $f\in L^2(d\mu) \ ({\rm resp.} f\in \shh)$.
We prove now the two items simultaneously.
  We need
  to approach it by a sequence in $D$ converging in $L^2(d\mu)$ ({\rm resp.} $f\in \shh).$
\begin{itemize}
  \item
    Concerning both items (1) and (2) we reduce first
    to the case when  $f$
    has compact support.
    For this we prove that
    there exists a sequence $(f_n)$ converging to $f$ where $f_n$ has compact support.

We fix now $f_n:=f\chi_n$.
The sequence $(f_n)$ obviously converges to $f$ in $L^2(d\mu)$.
Suppose now that $f \in \shh$ and let $(f_n)$ be the same sequence.
We have that $f_n(x) - f_n(y) = \int_{y}^{x}g_n(z)dz$ with $g_n = g\chi_n + f\chi_n'$,
where $g := f'$ (in the sense of distributions). So, $f_n$ belongs to $\shh$.
Moreover $g\chi_n\to g$ and $f\chi_n'\to 0$ in $L^2(d\mu)$. Therefore $g_n\to g$ in $L^2(d\mu)$
which proves the result.

\item For both points (1) and (2), we reduce now
   to the case when $f$ has a compact support and
   vanishes in a neighborhood of zero.
   For this we prove that for $f\in L^2(d\mu) (f\in \shh)$ with compact support there exists a sequence $(f_n)$ converging to $f$ where $f_n$ has compact support and vanishes at $0$.
   
  Suppose that $f \in L^2(d\mu)$ (resp. $f \in \shh$) has compact support.
   We set $\psi_n(x):= \chi(1 - xn), x \ge 0$, so
\begin{equation*}
	\psi_n(x)=\left\{\begin{array}{l}
		1, x\geq \frac{2}{n},\\
		0, x\leq \frac{1}{n}.
	\end{array}\right.
\end{equation*}
and $f_n:= f\psi_n$.
Clearly, since $f \in L^2(d\mu)$ then the sequence $(f_n)$
converges to $f$ in $L^2(d\mu)$, besides $f_n$ has compact support and is null on $[0,\frac{1}{n})$.
Suppose now $f \in \shh$.
We set $g:= f', g_n:= f_n'$ (in the sense of distributions), so that we have
$g_n = g\psi_n + f\psi_n'$. Clearly $g_n \to g$ in $L^2(d\mu)$
and the result is established.
We have then proved  the existence of a sequence $(f_n)$ 
of functions with compact support
vanishing on a neighborhood of $0$
converging to $f$
in $L^2(d\mu)$ (respectively in $\shh$).

\item
  In order to approach a function $f \in L^2(d\mu)$ (resp. $\shh$)
  with support in $(0, +\infty)$
  by a sequence of functions $(f_n)$ belonging to $D$
  we just proceed by convoluting  $f$
   with a sequence
  of mollifiers with compact support converging to the Dirac measure at zero.
  \end{itemize}
\end{prooff}

\begin{prooff} (of Lemma \ref{l1}).

\begin{itemize}
  \item
First we prove the inclusion $dom(\sqrt{-L^\delta_F}) \subset \shh$.
Let $f\in dom(\sqrt{-L^\delta_F})$.

By Proposition \ref{A.9} with $H = L^2(d\mu)$,
$T = -L^\delta$ and $D = dom(T)$,
there exists a sequence $(f_n)$, $f_n \in D$, such that
$\displaystyle \lim_n f_n = f$ in $L^2(d\mu)$
and
\begin{equation*}
\lim_{n,m}\langle f_m - f_n,L^\delta f_n -L^\delta f_m\rangle_{L^2(d\mu)} = 0.
\end{equation*}
Consequently, by Proposition \ref{p2} 
\begin{equation} \label{Ep2}
 \lim_{n,m}   \frac{1}{2}
 \langle f_n'-f_m', f_n'-f_m'\rangle_{L^2(d\mu)}
 = -  \lim_{n,m}   \langle f_m - f_n,L^\delta f_n -L^\delta f_m\rangle_{L^2(d\mu)} = 0.
  \end{equation}
That implies that $(f_n')$ is Cauchy in $L^2(d\mu)$ which yields the existence
of $l\in L^2(d\mu)$ such that $\displaystyle\lim_n f_n'=l$.
It remains to prove that $l$ is the derivative of $f$ in the
sense of distributions.

Let $\phi \in C^{\infty}_0(\R_+)$.
In particular $x \mapsto \phi(x) x^{1-\delta}$ and $x \mapsto \phi'(x) x^{1-\delta}$
belong to
$L^2(d\mu)$
since $\mu$ is a $\sigma$-finite Borel measure on $\R_+$.
We have
\begin{align*}
  \int_{\R_+}\phi(x)l(x)dx & = \int_{\R_+}\phi(x)l(x)x^{1-\delta}\mu(dx) =\lim_n \int_{\R_+}f_n'(x)\phi(x)x^{1-\delta}\mu(dx) \\ & = \lim_n \int_{\R_+}f_n'(x)\phi(x)dx  = -\lim_n\int_{\R_+}\phi'(x)f_n(x)dx \\ &= -\lim_n\int_{\R_+}\phi'(x)f_n(x)x^{1-\delta}\mu(dx) \\
  &= -\int_{\R_+}\phi'(x)f(x)x^{1-\delta}\mu(dx)  = -\int_{\R_+}\phi'(x)f(x)dx,
\end{align*}
where for the second equality
(resp. for the second to last equality) we use the fact that
 $x \mapsto \phi(x) x^{1-\delta}$
(resp. $x \mapsto \phi'(x) x^{1-\delta}$) belongs to $L^2(d\mu)$.
This proves that $l$ is the derivative of $f$ in the sense of distributions and by consequence $f\in \shh$.
\item
  Now we prove the converse inclusion $\shh \subset dom(\sqrt{-L^\delta_F})$. Let $f\in \shh$. By Proposition \ref{eqH} item (1) there exists a sequence
  of functions $(f_n)$, $f_n\in D$, such that  $\displaystyle\lim_n f_n = f$ and $\displaystyle\lim_n f_n' = f'$ in $L^2(d\mu)$. In particular
  $(f_n')$ is Cauchy. Now again Proposition \ref{p2} yields
  $$  \lim_{n,m}\langle f_n - f_m, L^{\delta} f_n - L^{\delta} f_m\rangle_{L^2(d\mu)} = -\frac{1}{2} \lim_{n,m}\langle f_n' - f_m', f_n' - f_m'\rangle_{L^2(d\mu)} = 0.$$
  By Proposition \ref{A.9}, $f$ is shown to belong to $dom(\sqrt{-L^\delta_F})$.
\end{itemize}
\end{prooff}

\begin{lemma}\label{l2}
  Let $\hat{B}$ be a real Banach space and $\Phi:[0,T]\rightarrow \hat{B}$
of class $C^1$.
                Then there exists a sequence $(\Phi_n)$ of the type
                \begin{equation}\label{l2e1}
	 		\Phi_n(t) = \sum_{k}l^n_k(t)f^n_k, \text{ where } l^n_k\in C^\infty([0,T];\R_+) \text{ and } f^n_k \in \hat B,
	 	\end{equation}
                such that  $\Phi_n \rightarrow \Phi$ and  $\Phi'_n \rightarrow \Phi'$ uniformly,
                i.e. in $C^1([0,T];\hat B)$.

\end{lemma}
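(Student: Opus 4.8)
The plan is to approximate $\Phi$ first on the level of its derivative by step functions with values in $\hat B$, then integrate back, and finally smooth in the time variable by mollification, keeping everything in the stated finite-sum form. More precisely, since $\Phi \in C^1([0,T];\hat B)$, the map $\Phi'$ is uniformly continuous on $[0,T]$ with values in $\hat B$; fix $\varepsilon > 0$ and choose a uniform partition $0 = t_0 < t_1 < \dots < t_N = T$ fine enough that $\|\Phi'(s) - \Phi'(t)\|_{\hat B} < \varepsilon$ whenever $|s-t|$ is bounded by the mesh. Define the piecewise-constant approximation $\psi(t) := \sum_{k=0}^{N-1} \Phi'(t_k)\,\mathds{1}_{[t_k, t_{k+1})}(t)$, so that $\sup_{t}\|\psi(t) - \Phi'(t)\|_{\hat B} < \varepsilon$. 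Then set $\tilde\Phi(t) := \Phi(0) + \int_0^t \psi(r)\,dr$; this is a continuous piecewise-linear $\hat B$-valued function, uniformly within $T\varepsilon$ of $\Phi$, and its (piecewise-constant) derivative is within $\varepsilon$ of $\Phi'$.

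The second step is to smooth in time. Both $\tilde\Phi$ and $\psi$ are $\hat B$-valued; extend them constantly outside $[0,T]$ and convolve with a scalar mollifier $\rho_\eta \in C^\infty_c(\R)$, $\rho_\eta \ge 0$, $\int \rho_\eta = 1$, supported near $0$. The convolution $\tilde\Phi * \rho_\eta$ belongs to $C^\infty([0,T];\hat B)$, converges uniformly to $\tilde\Phi$ as $\eta \to 0$ (by uniform continuity of $\tilde\Phi$), and its derivative $\psi * \rho_\eta$ converges uniformly to $\psi$ at all but finitely many points, with a uniform bound; a short argument using the explicit piecewise-linear structure upgrades this to uniform convergence of derivatives on $[0,T]$. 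Combining the two steps and a diagonal choice of parameters produces, for each $n$, a function $\Phi_n \in C^\infty([0,T];\hat B)$ with $\Phi_n \to \Phi$ and $\Phi_n' \to \Phi'$ uniformly.

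It remains to put $\Phi_n$ in the separated-variables form \eqref{l2e1}. Each $\Phi_n$, being $C^\infty$ on a compact interval with values in $\hat B$, can itself be approximated in $C^1([0,T];\hat B)$ by its Bernstein-type polynomials or by truncating a suitable expansion: writing $\Phi_n$ as a Riemann sum $\sum_j \Phi_n(s_j) \varphi_j^{(n)}(t)$ against smooth scalar bump functions $\varphi_j^{(n)}$ forming a smooth partition of unity subordinate to a fine partition, one checks that both the function and its derivative converge uniformly, since $\Phi_n$ and $\Phi_n'$ are uniformly continuous. This gives exactly the form $\sum_k l_k^n(t) f_k^n$ with $l_k^n \in C^\infty([0,T];\R_+)$ and $f_k^n = \Phi_n(s_k) \in \hat B$ (replacing each $\varphi_j^{(n)}$ by $|\varphi_j^{(n)}|$ if one insists on nonnegativity, which is compatible with a partition of unity). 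Another diagonal extraction then yields the desired sequence. The only mild subtlety is ensuring the derivatives converge uniformly through the mollification step despite $\psi$ being only piecewise constant; this is handled by noting that on each subinterval $[t_k, t_{k+1}]$ away from the partition points the mollified derivative equals an average of values of $\Phi'$ over a shrinking window, hence converges uniformly there, while the total length of the neighborhoods of partition points where this fails tends to zero — and since we only need uniform convergence of $\Phi_n'$ to $\Phi'$, not to $\psi$, the $\varepsilon$ from the first step absorbs the rest.
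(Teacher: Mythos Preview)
Your proof takes a much more circuitous route than necessary, and the key step that is supposed to deliver the separated-variables form has a genuine gap.

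The specific problem is in your Step~4. You claim that writing $\Phi_n$ against a smooth partition of unity, $\sum_j \Phi_n(s_j)\varphi_j^{(n)}(t)$, converges to $\Phi_n$ in $C^1$ ``since $\Phi_n$ and $\Phi_n'$ are uniformly continuous''. This is false in general. Take $\hat B=\R$, $\Phi_n(t)=t$, and a partition of unity built from a fixed smooth profile $\sigma:[0,1]\to[0,1]$ rescaled to mesh $h$; on an interval where only $\varphi_j,\varphi_{j+1}$ are active one computes the derivative of the interpolant to be $h\,\varphi_{j+1}'(t)=\sigma'((t-s_j)/h)$, which oscillates between $0$ and $\max\sigma'$ and does \emph{not} converge to $\Phi_n'(t)=1$ as $h\to 0$. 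Uniform continuity of $\Phi_n'$ is irrelevant here; the obstruction is that derivatives of the bump functions are of size $1/h$. Your parenthetical alternative ``Bernstein-type polynomials'' would repair this (it is a classical theorem that $B_m\Phi\to\Phi$ in $C^1$ for $\Phi\in C^1$), but if you invoke that, it applies directly to the original $\Phi$ and renders your Steps~1--3 entirely superfluous; the mollification detour (with its own unaddressed boundary and jump issues) never needed to happen.

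The paper's argument avoids all of this by applying the partition of unity to $\Phi'$ rather than to $\Phi$. Setting $v=\Phi'$ and $v_n(t)=\sum_k v(t_k)\varphi_k^n(t)$, uniform continuity of $v$ gives $v_n\to v$ uniformly with no derivative loss, and then $\Phi_n(t):=\Phi(0)+\int_0^t v_n(s)\,ds=\Phi(0)+\sum_k l_k^n(t)\,v(t_k)$ with $l_k^n(t)=\int_0^t\varphi_k^n(s)\,ds\ge 0$ is already in the form \eqref{l2e1}, and $\Phi_n'\!=v_n\to\Phi'$ uniformly forces $\Phi_n\to\Phi$ uniformly as well. One step, no mollifiers, no diagonal extraction.
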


\begin{proof}
For each $n$ we consider a dyadic partition of $[0,\,T]$ given by  $t_k =2^{-n} kT$,
	$k \in \{0,..., 2^n\}$, $n \in \N$.
	We define the open recovering of $[0,T]$
	\begin{equation}
		U_k^n =
		\left\{\begin{array}{l}
			\left[t_0, t_1\right)\quad k=0,\\
                         \left(t_{k-1}, t_{k+1}\right)\quad k \in \{1,...,2^{n}-1 \}, \\
                         \left(t_{2^n -1}, T\right] \quad k = 2^n.
		\end{array}\right.
	\end{equation}
	
	For each $n$ we consider a smooth partition $(\varphi_k^n)$ of the unity on $[0,T]$.  In particular, 
	\begin{align*}
		&\sum_{k=0}^{2^n} \varphi_k^n =1,\\
		&\varphi_k^n \geq 0,\quad k \in \{0,..., 2^{n}\},\\
		& {\rm supp} \varphi_k^n \subset U_k^n.
	\end{align*}
        We set $v = \Phi'$.
        We define 
		\begin{align*}
			v_n(t)&:=  \sum_{k=0}^{2^n}v(t_k) \varphi_k^n(t).
		\end{align*}
                     Notice that $v \in C^0([0,T];\hat B)$. Since $v$ is uniformly continuous $v_n \to v$ uniformly.
                     We set now
                     $\Phi_n(t):= \Phi(0) + \int_{0}^{t}v _n(s)ds = \Phi(0) +
                     \sum_{k = 0}^{2^n}l^n_k(t)v(t_k),$
                where $l^n_k(t):= \int_{0}^{t}\varphi^n_k(s)ds$.
                We have
                  $\Phi'_n  = v_n \to v= f'$ uniformly, as we have discussed above.
 Consequently $\Phi_n \to \Phi$ in  $C^0([0,T];\hat B),$ which shows that $\Phi_n \rightarrow \Phi$ in $C^1([0,T];\hat B)$.

              \end{proof}

\bigskip

{\bf ACKNOWLEDGEMENTS.} 
The work of FR was partially supported 
 by the  ANR-22-CE40-0015-01 project (SDAIM). The work of AO was partially supported by the 00193-00001506/2021-12 project (FAPDF).

\bibliographystyle{plain}
\bibliography{../../BIBLIO_FILE/biblio-PhD-Alan}
 
\end{document}